\theoremstyle{definition}
\newtheorem{definition}{Definition}[section]
\newtheorem{remark}[definition]{Remark}
\newtheorem{example}[definition]{Example}
\newtheorem{exercise}[definition]{Exercise}
\newtheorem{openproblem}[definition]{Open Problem}
\theoremstyle{plain}
\newtheorem{lemma}[definition]{Lemma}
\newtheorem{proposition}[definition]{Proposition}
\newtheorem{theorem}[definition]{Theorem}
\newtheorem{corollary}[definition]{Corollary}
\newtheorem{conjecture}[definition]{Conjecture}
\begin{document}

\title[Gr\"obner Bases and Universal Envelopes]
{Free Associative Algebras,
Noncommutative Gr\"obner Bases, and
Universal Associative Envelopes for Nonassociative Structures}

\author{Murray R. Bremner}

\address{Department of Mathematics and Statistics, 
University of Saskatchewan, Canada}

\email{bremner@math.usask.ca}

\urladdr{math.usask.ca/~bremner}

\begin{abstract}
These are the lecture notes from my short course of the same title 
at the CIMPA Research School on 
\emph{Associative and Nonassociative Algebras and Dialgebras: 
Theory and Algorithms - In Honour of Jean-Louis Loday (1946--2012)}, 
held at CIMAT, Guanajuato, Mexico, February 17 to March 2, 2013.
The underlying motivation is to apply the theory of noncommutative Gr\"obner bases 
in free associative algebras to the construction of 
universal associative envelopes for nonassociative structures 
defined by multilinear operations.
Trilinear operations were classified by the author and Peresi in 2007.
In her Ph.D. thesis of 2012, Elgendy studied the universal associative envelopes
of nonassociative triple systems obtained by applying these
trilinear operations to the 2-dimensional simple associative triple system.
In these notes I use computer algebra 
to extend some aspects of her work to the 4-dimensional and 6-dimensional 
simple associative triple systems.
\end{abstract}

\maketitle

\allowdisplaybreaks


\section{Introduction}

The primary goal of these lecture notes is to apply the theory of noncommutative 
Gr\"obner bases in free associative algebras to the construction of universal 
associative envelopes for nonassociative structures defined by multilinear operations.
Throughout I will take an algorithmic approach, developing just enough theory to motivate 
the computational methods.  
Some of the easier proofs and examples are left as exercises for the reader.
Along the way, I will mention a number of open research problems.
I begin by recalling the basic definitions of the most familiar examples of 
nonassociative structures: finite dimensional Lie and Jordan algebras and their 
universal associative enveloping algebras.
Unless otherwise indicated, I will work over an arbitrary field $F$.

\subsection{Lie algebras}

Lie algebras are defined by the polynomial identities of degree $\le 3$ satisfied by the Lie bracket
$[x,y] = xy - yx$ in every associative algebra, namely anticommutativity and the Jacobi identity:
  \[
  [x,x] \equiv 0, 
  \qquad
  [[x,y],z] + [[y,z],x] + [[z,x],y] \equiv 0.
  \]
Every polynomial identity satisfied by the Lie bracket in every associative algebra 
is a consequence of these two identities;
see Corollary \ref{corollarylie}.

\begin{definition}
Let $A$ be an associative algebra with product denoted $xy$.
We write $A^-$ for the Lie algebra which has the same underlying vector space as $A$, 
but the original associative operation is replaced by the Lie bracket $[x,y] = xy - yx$.
Let $L$ be a Lie algebra over $F$.
If $L$ is isomorphic to a subalgebra of $A^-$ then we call $A$ an \textbf{associative envelope} for $L$.
\end{definition}

\begin{example}
Let $L = \mathfrak{sl}_n(F)$ be the special linear Lie algebra of all $n \times n$ matrices of trace 0 over $F$.
Then clearly $L$ is a subalgebra of $A^-$ where $A = M_n(F)$ is the associative algebra of all $n \times n$ matrices.
\end{example}

\begin{definition} \label{definitionUL}
The \textbf{universal associative envelope} $U(L)$ of the Lie algebra $L$ is the unital associative algebra 
satisfying the following \textbf{universal property}, which implies that $U(L)$ is unique up to isomorphism:
  \begin{itemize}
  \item
There is a morphism of Lie algebras $\alpha\colon L \to U(L)^-$ such that
for any unital associative algebra $A$ and any morphism of Lie algebras $\beta\colon L \to A^-$,
there is a unique morphism of associative algebras $\gamma\colon U \to A$ 
satisfying $\beta = \gamma \circ \alpha$.
  \end{itemize}
In the terminology of category theory, this says that the functor 
sending a Lie algebra $L$ to its universal associative envelope $U(L)$ is the left adjoint of the functor
sending an associative algebra $A$ to the Lie algebra $A^-$.
\end{definition}

\begin{lemma} \label{lemmaUL}
The subset $\alpha(L)$ generates $U(L)$.
If $A$ is an associative envelope for $L$, and $A$ is generated by the subset $L$,
then $A$ is isomorphic to a quotient of $U(L)$; 
that is, $A \approx U(L) / I$ for some ideal $I$.
\end{lemma}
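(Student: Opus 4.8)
The plan is to derive both assertions directly from the universal property in Definition~\ref{definitionUL}, without appealing to any explicit construction of $U(L)$; the only tools needed are the existence and, crucially, the \emph{uniqueness} of the factoring morphism $\gamma$.

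For the first assertion, I would let $U'$ denote the unital subalgebra of $U(L)$ generated by $\alpha(L)$, and write $\iota\colon U' \hookrightarrow U(L)$ for the inclusion. Since $\alpha(L) \subseteq U'$, the map $\alpha$ corestricts to a Lie algebra morphism $\alpha'\colon L \to (U')^-$. Applying the universal property to the unital associative algebra $A = U'$ and the Lie morphism $\beta = \alpha'$ produces a unique associative morphism $\gamma\colon U(L) \to U'$ with $\gamma \circ \alpha = \alpha'$. The key step is then to compare the two associative endomorphisms $\iota \circ \gamma$ and $\mathrm{id}_{U(L)}$ of $U(L)$: both satisfy $(\iota\circ\gamma)\circ\alpha = \iota\circ\alpha' = \alpha$ and $\mathrm{id}_{U(L)}\circ\alpha = \alpha$, so the uniqueness clause of the universal property (taken with $A = U(L)$ and $\beta = \alpha$) forces $\iota\circ\gamma = \mathrm{id}_{U(L)}$. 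Hence $\iota$ is surjective and $U' = U(L)$, which is exactly the claim that $\alpha(L)$ generates $U(L)$.

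For the second assertion, since $A$ is an associative envelope for $L$ there is an injective Lie morphism $\beta\colon L \to A^-$, and the hypothesis that $A$ is generated by $L$ means precisely that the image $\beta(L)$ generates $A$ as a unital associative algebra. The universal property yields a unique associative morphism $\gamma\colon U(L)\to A$ with $\gamma\circ\alpha = \beta$. The image $\gamma(U(L))$ is a subalgebra of $A$ containing $\gamma(\alpha(L)) = \beta(L)$; since $\beta(L)$ generates $A$, this forces $\gamma(U(L)) = A$, so $\gamma$ is surjective. The first isomorphism theorem for associative algebras then gives $A \approx U(L)/\ker\gamma$, and setting $I = \ker\gamma$ completes the proof.

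I do not expect a serious obstacle here, as this is the standard adjunction/retraction argument; the one genuinely delicate point is the retraction trick in the first assertion, where the conclusion rests entirely on invoking uniqueness for the pair $(A,\beta) = (U(L),\alpha)$ rather than on any computation. A secondary point requiring care is the bookkeeping about units---one must take $U'$ to be the \emph{unital} subalgebra generated by $\alpha(L)$ so that the universal property, stated only for unital $A$, applies---together with the identification of ``$A$ is generated by $L$'' with ``$\beta(L)$ generates $A$'' under the embedding $\beta$. An alternative would be to invoke an explicit presentation of $U(L)$ as a quotient of the tensor algebra, where the generation claim is immediate by construction, but the construction-free categorical argument above is cleaner and is what I would present.
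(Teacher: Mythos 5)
Your proof is correct. The paper leaves this lemma as an exercise, so there is no official proof to compare against, but your argument is precisely the standard one the author intends: the retraction trick via the uniqueness clause of the universal property for the first assertion, and surjectivity of the induced map $\gamma$ plus the first isomorphism theorem for the second. Your attention to the two delicate points---taking $U'$ to be the \emph{unital} subalgebra generated by $\alpha(L)$ so that the universal property applies, and invoking uniqueness for the pair $(U(L),\alpha)$ rather than any computation---is exactly right.
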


\begin{proof}
Exercise.
\end{proof}

We will see later that $U(L)$ is always infinite dimensional, and that
the map $\alpha$ is always injective, so that $L$ is isomorphic to a subalgebra of $U(L)^-$.
These are corollaries of the PBW theorem (Theorem \ref{pbwtheorem}) that we will
prove using the theory of noncommutative Gr\"obner bases.

\begin{example}
Let $L$ be the $n$-dimensional Lie algebra with basis $\{ x_1, \dots, x_n \}$
and trivial commutation relations $[ x_i, x_j ] = 0$ for all $i, j$.
Then $U(L) \approx F[ x_1, \dots, x_n ]$, the algebra of commutative associative polynomials in $n$ variables over $F$.
\end{example}

\subsection{Jordan algebras}

Assume that $\mathrm{char}\,F \ne 2$.
Jordan algebras are defined by the polynomial identities of degree $\le 4$ satisfied by the 
Jordan product $x \circ y = \frac12 ( xy + yx )$ in every associative algebra,
commutativity and the Jordan identity:
  \[
  x \circ y \equiv y \circ x,
  \qquad
  ( ( x \circ x ) \circ y ) \circ x \equiv ( x \circ x ) \circ ( y \circ x ).
  \]
In contrast to Lie algebras, 
there exist further identities satisfied by the Jordan product in every associative algebra
which are not consequences of these two identities.
The simplest such identities were discovered almost 50 years ago;
they have degree 8 and are called the Glennie identities \cite{Glennie1966}.

\begin{definition} \label{definitionUJ}
Let $A$ be an associative algebra with product denoted $xy$.
We write $A^+$ for the Jordan algebra which has the same underlying vector space as $A$, 
but the original associative operation is replaced by the Jordan product $x \circ y = \frac12 ( xy + yx )$.
Let $J$ be a Jordan algebra over $F$.
If $J$ is isomorphic to a subalgebra of $A^+$ then we call $A$ an \textbf{associative envelope} for $J$.
\end{definition}

\begin{example}
Let $S_n(F)$ be the Jordan algebra of symmetric $n \times n$ matrices with entries in $F$, 
and let $A = M_n(F)$ be the associative algebra of all $n \times n$ matrices.
\end{example}

\begin{exercise}
Modify Definition \ref{definitionUL} to define universal associative envelopes for Jordan algebras.
State and prove the analogue of Lemma \ref{lemmaUL} for Jordan algebras.
\end{exercise}

If $J$ is finite dimensional, then so is its universal associative envelope $U(J)$.
On the other hand, the natural map from $J$ to $U(J)$ may not be injective; 
hence, strictly speaking, the universal associative envelope $U(J)$
may not be an associative envelope in the sense of Definition \ref{definitionUJ}.

\begin{example}
Let $J$ be the $n$-dimensional Jordan algebra with basis $\{ x_1, \dots, x_n \}$
and trivial products $x_i \circ x_j = 0$ for all $i, j$.
Then $U(J) \approx \Lambda( x_1, \dots, x_n )$, the exterior (Grassmann) algebra on $n$ generators over $F$,
and so $\dim U(J) = 2^n$.
\end{example}

We have the following definition, which has no analogue for Lie algebras.

\begin{definition}
If a Jordan algebra $J$ has an associative envelope then we call $J$ a \textbf{special} Jordan algebra.
Otherwise, we call $J$ an \textbf{exceptional} Jordan algebra.
\end{definition} 

\begin{example}
The vector space $H_3(\mathbb{O})$ of $3 \times 3$ Hermitian matrices over the 8-dimensional 
division algebra $\mathbb{O}$ of real octonions is closed under the Jordan product 
and is a 27-dimensional exceptional Jordan algebra.
\end{example}


\section{Free Associative Algebras}

These lecture notes on the theory of noncommutative Gr\"obner bases follow closely the exposition by 
de Graaf \cite[\S\S 6.1-6.2]{deGraaf2000}.
The most famous paper on this topic is by Bergman \cite{Bergman1978}, 
but similar results were published a little earlier by Bokut \cite{Bokut1976}.
Bokut's approach was based on Shirshov's work on Lie algebras \cite{Shirshov1962}.
(Shirshov's papers have appeared recently in English translation \cite{Shirshov2009}.)
For further references, including current research directions, see Section \ref{bibliographicalremarks}.

\begin{definition}
Let $X = \{ x_1, x_2, \dots, x_n, \dots \}$ be an \textbf{alphabet}:
a set of indeterminates (sometimes called \textbf{letters}), finite or countably infinite.
We impose a total order on $X$ by setting $x_i \prec x_j$ if and only if $i < j$.
We write $X^\ast$ for the set of \textbf{words} 
(also called \textbf{monomials}) $w = x_{i_1} x_{i_2} \cdots x_{i_k}$ where
$x_{i_1}, x_{i_2}, \dots, x_{i_k} \in X$ and $k \ge 0$.
(If $k = 0$ then we have the \textbf{empty word} denoted $w = 1$.)
The \textbf{degree} of a word $w = x_{i_1} x_{i_2} \cdots x_{i_k}$ is the number of letters it contains,
counting repetitions:
$\deg(w) = k$.
We define \textbf{concatenation} on $X^\ast$ by $(u,v) \mapsto uv$ for any $u, v \in X^\ast$;
this associative operation makes $X^\ast$ into the \textbf{free monoid} generated by $X$.
\end{definition}

\begin{example} \label{noncommutative}
If $X = \{ a \}$ has only one element, then $X^\ast = \{ \, a^k \mid k \ge 0 \, \}$
is the set of all non-negative powers of $a$.  
The multiplication on $X^\ast$ is given by $a^i a^j = a^{i+j}$, so $X^\ast$ is commutative.  
If $X$ has two or more elements, then $X^\ast$ is noncommutative.
For example, if $X = \{ a, b \}$ then there are $2^k$ distinct words of degree $k$ for all $k \ge 0$:
  \[
  \begin{array}{ll}
  k = 0\colon & 1
  \\
  k = 1\colon & a, \; b
  \\
  k = 2\colon & a^2, \; ab, \; ba, \; b^2
  \\ 
  k = 3\colon & a^3, \; a^2b, \; aba, \; ab^2, \; ba^2, \; bab, \; b^2a, \; b^3
  \\
  k = 4\colon & a^4, \; a^3b, \; a^2ba, \; a^2b^2, \; aba^2, \; abab, \; ab^2a, \; ab^3,
  \\
  & ba^3, \; ba^2b, \; baba, \; bab^2, \; b^2a^2, \; b^2ab, \; b^3a, \; b^4
  \end{array}
  \]
\end{example}

\begin{definition}
A nonempty word $u \in X^\ast$ is a \textbf{subword} (also called a \textbf{factor} or a \textbf{divisor}) 
of $w \in X^\ast$ if $w = v_1 u v_2$ for some $v_1, v_2 \in X^\ast$.
If $v_1 = 1$ then $u$ is a \textbf{left} subword of $w$; if $v_2 = 1$ then $u$ is a \textbf{right} subword of $w$.
We say that $u$ is a \textbf{proper} subword of $w$ if $u \ne w$.
\end{definition}

\begin{definition} \label{deforder}
The total order on $X$ extends to a total order on $X^\ast$, called the \textbf{deglex}
(degree lexicographical) order, as follows:
If $u, w \in X^\ast$ then $u \prec w$ (we say $u$ \textbf{precedes} $w$) if and only if either
  \begin{enumerate}
  \item[(i)]
  $\deg(u) < \deg(w)$, or
  \item[(ii)]
  $\deg(u) = \deg(w)$ where $u = v x_i u'$ and $w = v x_j w'$ 
  for some $v, u', w' \in X^\ast$ and $x_i, x_j \in X$ with $x_i < x_j$.
  \end{enumerate}
In condition (ii) we find the common left subword $v$ of highest degree, and then compare the next letters
$x_i$ and $x_j$ using the total order on $X$.
We write $u \preceq v$ when $u \prec v$ or $u = v$.
We often write $v \succ u$ to mean $u \prec v$.
\end{definition}

\begin{example}
Let $X = \{ a, b \}$ with $a \prec b$.  We list the words in $X^\ast$ of degree $\le 3$ in deglex order;
this is the same order as in Example \ref{noncommutative}:
  \[
  1 \prec a \prec b \prec a^2 \prec ab \prec ba \prec b^2 \prec
  a^3 \prec
  a^2 b \prec
  aba \prec
  a b^2 \prec
  b a^2 \prec
  b a b \prec
  b^2 a \prec
  b^3.
  \]
\end{example}

\begin{exercise}
Let $X = \{ a, b, c \}$ with $a \prec b \prec c$.  List the words in $X^\ast$ of degree $\le 3$ in deglex order.
Do the same with $c \prec b \prec a$.
\end{exercise}

\begin{definition}
A total order on $X^\ast$ is \textbf{multiplicative} if 
for all $u, v, w \in X^\ast$ with $u \prec v$ we have $uw \prec vw$ and $wu \prec wv$.
(More concisely, we could require the single condition that 
$w_1 u w_2 \prec w_1 v w_2$ for all $u, v, w_1, w_2 \in X^\ast$.)
\end{definition}

\begin{definition}
A total order on $X^\ast$ satisfies the \textbf{descending chain condition} (\textbf{DCC}) if
whenever $w_1, w_2, \dots, w_n, \dots \in X^\ast$ with $w_1 \succeq w_2 \succeq \cdots \succeq w_n \succeq \cdots$ 
then for some $n$ we have $w_n = w_{n+1} = \cdots$; that is,
there do not exist infinite strictly decreasing sequences.
Equivalently, for any $w \in X^\ast$ the set $\{ v \in X^\ast \mid v \prec w \}$ is finite.
The DCC allows us to use induction on $X^\ast$ with respect to the total order.
\end{definition}

\begin{lemma}
The total order $\prec$ on $X^\ast$ from Definition \ref{deforder} is multiplicative
and satisfies the descending chain condition.
\end{lemma}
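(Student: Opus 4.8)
The plan is to treat the two assertions separately. Multiplicativity is a direct case analysis on the defining clauses of Definition~\ref{deforder}. Suppose $u \prec v$; since $w_1 u w_2 \prec w_1 v w_2$ can be obtained by composing a right multiplication with a left multiplication, it suffices to prove $uw \prec vw$ and $wu \prec wv$ for a single $w$. If clause~(i) gives $u \prec v$, that is $\deg(u) < \deg(v)$, then appending the letters of $w$ on either side preserves the strict degree inequality, so clause~(i) again applies to the products. If instead clause~(ii) applies, with $\deg(u) = \deg(v)$, $u = p x_i u'$ and $v = p x_j v'$ for a common left subword $p$ and $x_i \prec x_j$, then $uw$ and $vw$ have equal degree and still share the left subword $p$ followed by the letters $x_i \prec x_j$, while $wu$ and $wv$ share the left subword $wp$ followed by the same pair; in both cases clause~(ii) delivers the conclusion.

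For the descending chain condition I would argue directly that no infinite strictly descending sequence exists. Suppose $w_1 \succ w_2 \succ \cdots$. Each strict step falls under clause~(i) or~(ii), so in all cases $\deg(w_{k+1}) \le \deg(w_k)$; thus the degrees form a non-increasing sequence of non-negative integers and are eventually constant, say equal to $d$ for all $k \ge N$. From index $N$ onward the sequence is strictly descending in the lexicographic order on the words of degree exactly $d$, so it remains to rule out infinite descent there. I would do this by induction on $d$, using that the alphabet $X$, ordered by $x_i \prec x_j \iff i < j$, is itself well-ordered. Writing a degree-$d$ word as $x_i u$ with $\deg(u) = d-1$, the lexicographic order on degree-$d$ words is the lexicographic product of $X$ with the degree-$(d-1)$ words: along a descending chain the leading letters are non-increasing in the well-ordered $X$, hence eventually constant, after which the tails form an infinite descending chain among degree-$(d-1)$ words, contradicting the inductive hypothesis.

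I expect the descending chain condition to be the only real obstacle, and specifically the case of an infinite alphabet. There the reformulation suggested in the definition, that each set $\{\, v \mid v \prec w \,\}$ is finite, is no longer available, since every one of the infinitely many single letters $x_i$ precedes any fixed word of degree at least two. When $X$ is finite this reformulation does hold, because the number of words of degree $\le \deg(w)$ equals $\sum_{k=0}^{\deg(w)} |X|^k$, which is finite, and finiteness of all down-sets immediately yields the DCC. For a countably infinite alphabet, however, one is forced into the degree-stabilization-plus-lexicographic-induction argument above, and the delicate point is exactly the verification that a fixed-degree lexicographic comparison cannot sustain an infinite descent.
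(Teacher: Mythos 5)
Your proof is correct, and there is nothing in the paper to compare it against: the paper disposes of this lemma with the single word ``Exercise.'' The multiplicativity argument is a clean and complete case analysis, and the reduction of the two-sided condition to one-sided left and right multiplications is exactly right. For the descending chain condition, the degree-stabilization step followed by induction on the fixed degree $d$, using that $X$ itself is well-ordered and that the fixed-degree lexicographic order is the lexicographic product of $X$ with the words of degree $d-1$, is a valid well-ordering argument that covers both the finite and the countably infinite alphabet.

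Your closing observation is a genuine contribution rather than a digression: the ``equivalently'' clause in the paper's definition of the DCC (finiteness of every down-set $\{\,v \mid v \prec w\,\}$) is strictly stronger than the absence of infinite descending chains, and it genuinely fails for deglex on an infinite alphabet, where every letter precedes any word of degree two. The lemma as stated is nevertheless true in that case, but only via the well-ordering argument you give, not via counting. This also matters downstream: the proof of Proposition \ref{propositionIC(I)} explicitly invokes the finiteness of the set of monomials preceding $LM(f)$ to justify its induction, so for an infinite alphabet that induction would have to be recast as well-founded (Noetherian) induction over the well-order, exactly as your argument licenses.
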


\begin{proof}
Exercise.
\end{proof}

\begin{definition}
We write $F\langle X \rangle$ for the vector space with basis $X^\ast$ over $F$.
Concatenation in $X^\ast$ extends bilinearly to $F\langle X \rangle$:
  \[
  \Big( \sum_i a_i u_i \Big) \Big( \sum_j b_j v_j \Big)
  =
  \sum_{i,j} a_i b_j u_i v_j
  \quad
  ( a_i, b_j \in F; \, u_i, v_j \in X^\ast ).
  \]
This multiplication makes $F\langle X \rangle$ into the \textbf{free associative algebra} generated by $X$ over $F$.
This is a \textbf{unital} algebra, since the empty word acts as the unit element.
Elements of $F\langle X \rangle$ are linear combinations of monomials in $X^\ast$, 
and we refer to them as \textbf{noncommutative polynomials} in 
the variables $X$ with coefficients in $F$. 
(Here \emph{noncommutative} means \emph{not necessarily commutative}.)
\end{definition}

\begin{example}
If $X = \{ a \}$ has only one element, then $F\langle X \rangle$ is the same as $F[a]$,
the familiar algebra of commutative associative polynomials in one variable. 
If $X$ has two or more elements, then $F\langle X \rangle$ and $F[X]$ do not coincide:
$F[X]$ is commutative but $F\langle X \rangle$ is noncommutative.
\end{example}

\begin{definition}
Consider a nonzero element $f \in F\langle X \rangle$.  
We write 
  \[
  f = \sum_{i \in \mathcal{I}} a_i u_i \quad ( a_i \in F; \, u_i \in X^\ast ),
  \]
where $\mathcal{I}$ is a nonempty finite index set and $a_i \ne 0$ for all $i \in \mathcal{I}$.
The \textbf{support} of $f$ is the set of all monomials occurring in $f$:
  \[
  \mathrm{support}(f) = \{ \, u_i \mid i \in \mathcal{I} \, \}.
  \]
(If $f = 0$ then by convention its support is the empty set $\emptyset$.)
For nonzero $f \in F\langle X \rangle$, the support is a nonempty finite subset of $X^\ast$;
the greatest element of $\mathrm{support}(f)$ with respect to the total order $\prec$ on $X^\ast$ is 
the \textbf{leading monomial} of $f$, denoted $LM(f)$.
The coefficient of $LM(f)$ is the \textbf{leading coefficient} of $f$,
denoted $lc(f)$.
We say that $f$ is \textbf{monic} if $lc(f) = 1$.
For any subset $S \subseteq F\langle X \rangle$, we write 
  \[
  LM(S) = \{ \, LM(f) \mid f \in S \, \}.
  \]
\end{definition}

\begin{example}
For $X = \{ a, b, c \}$ and $cab - bca + da - cb + a^2 \in F\langle X \rangle$ we have 
  \[
  \mathrm{support}(f) = \{ \, a^2, \, cb, \, da, \, bca, \, cab \},
  \qquad
  LM(f) = cab, 
  \qquad
  lc(f) = 1.
  \]
\end{example}

\begin{definition} \label{definitionstandardform}
The \textbf{standard form} of a nonzero element $f \in F\langle X \rangle$ 
consists of $f$ divided by $lc(f)$ with the monomials in reverse deglex order.
Thus the standard form is monic and the leading monomial occurs in the first (leftmost) position.
The polynomial $f$ in the previous example is in standard form.
\end{definition}


\section{Universal Associative Envelopes of Lie and Jordan Algebras}

We use the concepts of the previous section to construct
the universal associative envelopes of Lie and Jordan algebras.

\begin{definition}
Every associative algebra $A$ is isomorphic to a quotient $F\langle X \rangle / I$ 
for some set $X$ and some ideal $I \subseteq F\langle X \rangle$.
If $I$ is generated by the subset $G \subset I$ then the pair $(X,G)$
is a \textbf{presentation} of $A$ by \textbf{generators} and \textbf{relations}.
\end{definition} 

\subsection{Lie algebras}

Let $L$ be a Lie algebra of finite dimension $d$ over $F$ with basis $X = \{ \, x_1, \dots, x_d \, \}$.
The structure constants $c_{ij}^k \in F$ are given by the equations
  \[
  [ x_i, x_j ] = \sum_{k=1}^d c_{ij}^k x_k
  \qquad
  ( 1 \le i, j \le d ).
  \]
Let $F\langle X \rangle$ be the free associative algebra generated by $X$.
(By a slight abuse of notation, we regard the basis elements of $L$ as formal variables, 
but this should not cause confusion.)
Let $I$ be the ideal in $F\langle X \rangle$ generated by the $d(d{-}1)/2$ elements
  \[
  x_i x_j - x_j x_i - \sum_{k=1}^d c_{ij}^k x_k  
  \qquad
  ( 1 \le j < i \le d ).
  \]
The quotient algebra $U(L) = F\langle X \rangle / I$ is the universal associative envelope of $L$.

\begin{example} \label{Usl2example}
We consider the Lie algebra $\mathfrak{sl}_2(F)$ of $2 \times 2$ matrices of trace 0 over a field $F$ of characteristic 0.
We use the following notation for basis elements:
  \[
  h = E_{11} - E_{22} = \left[ \begin{array}{rr} 1 & 0 \\ 0 & -1 \end{array} \right],
  \quad
  e = E_{12} = \left[ \begin{array}{rr} 0 & 1 \\ 0 & 0 \end{array} \right],
  \quad
  f = E_{21} = \left[ \begin{array}{rr} 0 & 0 \\ 1 & 0 \end{array} \right].
  \]
The structure constants are given by these equations:
  \[
  [h,e] = 2e, \qquad [h,f] = -2f, \qquad [e,f] = h.
  \]
From these equations we obtain the following set of generators $G$ for the ideal $I$:
  \[
  he - eh - 2e, 
  \qquad
  hf - fh + 2f, 
  \qquad
  ef - fe - h.
  \]
The universal associative envelope of $\mathfrak{sl}_2(F)$ is 
the quotient $U(\mathfrak{sl}_2(F)) = F\langle h, e, f \rangle / I$.
\end{example}

\subsection{Jordan algebras}

If $J$ is a Jordan algebra with structure constants
  \[
  x_i \circ x_j = \sum_{k=1}^d c_{ij}^k x_k  
  \qquad
  ( 1 \le i, j \le d ),
  \]
then we consider the ideal $I$ generated by the $d(d{+}1)/2$ elements
  \[
  \tfrac12 ( x_i x_j + x_j x_i ) - \sum_{k=1}^d c_{ij}^k x_k  
  \qquad
  ( 1 \le j \le i \le d ),
  \]
and $U(J) = F\langle X \rangle / I$ is the universal associative envelope of $J$.

\begin{example} \label{jordansymmetric}
We consider the Jordan algebra $S_2(F)$ of symmetric $2 \times 2$ matrices over a field $F$ of characteristic 0.
We use the following notation for basis elements:
  \[
  a = E_{11} = \begin{bmatrix} 1 & 0 \\ 0 & 0 \end{bmatrix},
  \quad
  b = E_{22} = \begin{bmatrix} 0 & 0 \\ 0 & 1 \end{bmatrix},
  \quad
  c = E_{12} + E_{21} = \begin{bmatrix} 0 & 1 \\ 1 & 0 \end{bmatrix}.
  \]
The structure constants are given by these equations:
  \[
  a \circ a = 2a, 
  \quad
  a \circ b = 0,
  \quad
  a \circ c = c,
  \quad
  b \circ b = 2b,
  \quad
  b \circ c = c,
  \quad
  c \circ c = 2a + 2b.
  \]
From these equations we obtain the following set $G$ of generators for the ideal $I$:
  \[ 
  a^2 - a,
  \quad  
  ba + ab,
  \quad    
  ca + ac - c,
  \quad    
  b^2 - b,
  \quad    
  cb + bc - c,
  \quad    
  c^2 - b - a.
  \]
The universal associative envelope of $S_2(F)$ is the quotient $U(S_2(F)) = F\langle a, b, c \rangle / I$.
\end{example}


\section{Normal Forms of Noncommutative Polynomials}

To understand the structure of the quotient algebra $F\langle X \rangle / I$, 
we need to find a basis for $F\langle X \rangle / I$ and express the product of any two basis elements
as a linear combination of basis elements.
This can be achieved easily if we can construct a \textit{Gr\"obner basis} for the ideal $I$:
a set of generators (not a linear basis) for $I$ with special properties
which will be explained in detail in this section and the next.

\subsection{Normal forms modulo an ideal}

A basis for $F\langle X \rangle / I$ is a subset $B$ of $F\langle X \rangle$ consisting of coset representatives:
the elements $b + I$ for $b \in B$ are linearly independent in $F\langle X \rangle / I$ and 
span $F\langle X \rangle / I$.
Equivalently, $B$ is a basis for a complement $C(I)$ to $I$ in $F\langle X \rangle$, meaning that
$F\langle X \rangle = I \oplus C(I)$, the direct sum of subspaces. 

\begin{lemma}
Assume that $I$ is an ideal in $F\langle X \rangle$, and that $B$ is a subset of $F\langle X \rangle$.
Then the set $\{ \, b + I \mid b \in B\, \}$ is a basis of the quotient $F\langle X \rangle / I$
if and only if
$B$ is a basis for a complement of $I$ in $F\langle X \rangle$;
that is, 
the elements of $B$ are linearly independent in $F\langle X \rangle$ and
$F\langle X \rangle = I \oplus \mathrm{span}(B)$.
\end{lemma}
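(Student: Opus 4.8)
The plan is to reduce everything to a single statement about the canonical quotient map $\pi\colon F\langle X\rangle \to F\langle X\rangle/I$, namely that its restriction to $\mathrm{span}(B)$ is a linear isomorphism precisely when $F\langle X\rangle = I \oplus \mathrm{span}(B)$. Only the vector-space structure is used, so the fact that $I$ is an ideal plays no role; any subspace would do. Writing $V = F\langle X\rangle$ throughout, the central object is the linear map $\pi|_{\mathrm{span}(B)}\colon \mathrm{span}(B) \to V/I$.

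First I would record two basic correspondences. The image of $\pi|_{\mathrm{span}(B)}$ is $(\mathrm{span}(B)+I)/I$, so this restriction is surjective if and only if $\mathrm{span}(B)+I = V$. Its kernel is $\mathrm{span}(B)\cap I$, so it is injective if and only if $\mathrm{span}(B)\cap I = \{0\}$. Combining the two, $\pi|_{\mathrm{span}(B)}$ is an isomorphism if and only if $V = I \oplus \mathrm{span}(B)$.

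For the forward direction, assume the family $(b+I)_{b\in B}$ is a basis of $V/I$. Spanning of the cosets gives $\mathrm{span}(B)+I = V$, hence surjectivity. For injectivity, any $v \in \mathrm{span}(B)\cap I$ can be written $v = \sum_i a_i b_i$ with $b_i \in B$; applying $\pi$ yields $\sum_i a_i (b_i+I) = 0$, so linear independence of the cosets forces every $a_i = 0$ and thus $v = 0$. The same computation applied to an arbitrary relation $\sum_i a_i b_i = 0$ in $V$ shows that $B$ is linearly independent in $V$. By the correspondence above we conclude $V = I \oplus \mathrm{span}(B)$.

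For the converse, assume $B$ is linearly independent and $V = I \oplus \mathrm{span}(B)$; then $\pi|_{\mathrm{span}(B)}$ is an isomorphism. Since $B$ is a linearly independent spanning set for $\mathrm{span}(B)$, it is a basis of $\mathrm{span}(B)$, and a linear isomorphism carries a basis to a basis, so $(b+I)_{b\in B}$ is a basis of $V/I$; injectivity of $\pi|_{\mathrm{span}(B)}$ also ensures that distinct elements of $B$ give distinct cosets, so the family is genuinely indexed by $B$. The only point requiring care is this bookkeeping of the family-versus-set distinction for the cosets; the algebraic content is the elementary isomorphism criterion for a direct-sum complement, and I do not anticipate any substantial obstacle beyond it.
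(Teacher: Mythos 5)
Your proof is correct. The paper leaves this lemma as an exercise, so there is no official proof to compare against; your argument via the restriction $\pi|_{\mathrm{span}(B)}$ of the quotient map, with surjectivity corresponding to $\mathrm{span}(B)+I=F\langle X\rangle$ and injectivity to $\mathrm{span}(B)\cap I=\{0\}$, is exactly the standard argument the exercise intends. You are also right to flag the set-versus-family issue: read literally with ``set,'' the forward implication can fail (e.g.\ $B=\{x,\,x+i\}$ with $0\ne i\in I$ gives a one-element set of cosets while $\mathrm{span}(B)\cap I\ne\{0\}$), so interpreting the cosets as a family indexed by $B$, as you do, is the correct reading.
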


\begin{proof}
Exercise.
\end{proof}

\begin{definition} \label{normalwordsmoduloI}
Let $I$ be an ideal in $F\langle X \rangle$.
The set $N(I)$ of \textbf{normal words} modulo $I$ is the subset of $X^\ast$ 
consisting of all monomials which are \emph{not} leading monomials of elements of $I$:
  \[
  N(I) = \{ \, w \in X^\ast \mid w \notin LM(I) \, \}.
  \]
The \textbf{complement} to $I$ in $F\langle X \rangle$ is the subspace $C(I) \subseteq F\langle X \rangle$ 
with basis $N(I)$.
\end{definition}

\begin{proposition} \label{propositionIC(I)}
We have $F\langle X \rangle = I \oplus C(I)$.
\end{proposition}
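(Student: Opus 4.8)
The plan is to establish the two defining properties of a direct-sum decomposition separately: first that the sum $I + C(I)$ is direct, i.e. $I \cap C(I) = \{0\}$, and then that it exhausts the whole algebra, i.e. $I + C(I) = F\langle X \rangle$. The directness is the easy half and follows immediately from the way $N(I)$ was defined in terms of leading monomials; the spanning half is where the real work lies, and I would prove it by well-founded induction on $X^\ast$, using the descending chain condition to justify the induction.

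For directness, suppose toward a contradiction that some nonzero $f$ lies in $I \cap C(I)$. Since $f \in C(I) = \mathrm{span}(N(I))$, every monomial in $\mathrm{support}(f)$ is a normal word, and in particular $LM(f) \in N(I)$, so $LM(f) \notin LM(I)$. On the other hand $f$ is a nonzero element of $I$, so by the very definition of $LM(I)$ we have $LM(f) \in LM(I)$. This contradiction forces $f = 0$, and hence $I \cap C(I) = \{0\}$.

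For spanning, since $I + C(I)$ is a subspace of $F\langle X \rangle$ and the monomials $X^\ast$ span $F\langle X \rangle$, it suffices to show that every word $w \in X^\ast$ lies in $I + C(I)$. I would argue by induction on $w$ with respect to the deglex order: the descending chain condition guarantees that $\{ v \in X^\ast \mid v \prec w \}$ is finite, so this is a legitimate well-founded induction. Fix $w$ and assume every $v \prec w$ already lies in $I + C(I)$. If $w \in N(I)$, then $w \in C(I)$ and we are done. Otherwise $w \in LM(I)$, so there is a monic $f \in I$ with $LM(f) = w$; then every monomial surviving in $w - f$ is strictly smaller than $w$, so $w - f \in I + C(I)$ by the induction hypothesis, and therefore $w = f + (w - f) \in I + C(I)$ as well.

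The main obstacle is making the inductive step in the spanning argument rigorous: one must verify that the reduction $w \mapsto w - f$ strictly lowers all surviving monomials, which is exactly the statement that $LM(f)$ is the $\prec$-greatest monomial of $f$ together with monicity, and one must know the induction is well-founded, which is exactly the content of the descending chain condition. No appeal to multiplicativity of the order is needed for this proposition; that property will only become essential later, when we turn such reductions into an algorithm and analyze Gr\"obner bases.
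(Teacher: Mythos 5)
Your proof is correct. Both halves match the paper's strategy: the directness argument via leading monomials is identical, and the spanning argument is the same well-founded induction on deglex order, hinging on the same (non-constructive) step of choosing a monic $f \in I$ with $LM(f) = w$ whenever $w \in LM(I)$. The one genuine difference is organizational: you first observe that $I + C(I)$ is a subspace and $X^\ast$ spans $F\langle X \rangle$, so it suffices to treat a single monomial $w$, whereas the paper inducts over arbitrary polynomials $f$ via their leading monomials and consequently has to carry along the tails $f_0, g_0, h_0, k_0, \ell_0, m_0$ through several case splits. Your reduction to monomials buys a visibly shorter inductive step (one case split instead of four) at no cost in rigour; the paper's version, on the other hand, is closer in shape to the division algorithm of Figure \ref{normalformmoduloG} that it is meant to foreshadow, which is presumably why the author wrote it that way. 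Your closing remark that multiplicativity of the order is not needed here, only the DCC, is also accurate.
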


\begin{proof}
We follow de Graaf \cite[Proposition 6.1.1]{deGraaf2000} but fill in some details.
The proof consists for the most part of writing out the details in 
the division algorithm for noncommutative polynomials.

First, we prove that $I \,\cap\, C(I) = \{0\}$.
Assume that $f \in I$ and $f \in C(I)$.
If $f \ne 0$ then since $f \in I$, its leading monomial $LM(f)$ belongs to $LM(I)$;
but since $f \in C(I)$, its leading monomial belongs to $N(I)$, and hence does not belong to $LM(I)$.
This contradiction implies that $f = 0$.

Second, we prove that any $f \in F\langle X \rangle$ can be written as $f = g + h$ where $g \in I$ and $h \in C(I)$.
This is clear for $f = 0$ (take $g = h = 0$), so we assume that $f \ne 0$.
We use induction on leading monomials with respect to the total order $\prec$ on $X^\ast$.

For the basis of the induction, assume that $LM(f) = 1$ (the empty word).
Then $f = \alpha \in F \setminus \{0\}$.
If $I = F\langle X \rangle$ then $N(I) = \emptyset$ and $C(I) = \{0\}$; 
we have $f = \alpha + 0$ where $\alpha \in I$ and $0 \in C(I)$.
If $I \ne F\langle X \rangle$ then $1 \notin LM(I)$ so $1 \in N(I)$;
we have $f = 0 + \alpha$ where $0 \in I$ and $\alpha \in C(I)$.

Since $X^\ast$ satisfies the DCC, we may now assume the claim for all $f_0 \in F\langle X \rangle$ 
with $LM(f_0) \prec LM(f)$.
This is the inductive hypothesis, which depends on the fact that
only finitely many elements of $X^\ast$ precede $LM(f)$.
We have $f = \alpha LM(f) + f_0$ where $\alpha = lc(f) \in F$, and either $f_0 = 0$ or $LM(f_0) \prec LM(f)$.

If $f_0 = 0$ then $f = \alpha LM(f)$; 
if $LM(f) \in I$ then $f = \alpha LM(f) + 0 \in I + C(I)$, 
and if $LM(f) \notin I$ then $LM(f) \in N(I)$ and $f = 0 + \alpha LM(f) \in I + C(I)$.

If $f_0 \neq 0$ then $LM(f_0) \prec LM(f)$, and by induction we have $f_0 = g_0 + h_0$ where
$g_0 \in I$ and $h_0 \in C(I)$.
We now have two cases: $LM(f) \in N(I)$ and $LM(f) \notin N(I)$.
If $LM(f) \in N(I)$ then
  \[
  f = \alpha LM(f) + ( g_0 + h_0 ) = g_0 + \big( \, \alpha LM(f) + h_0 \, \big) \in I + C(I).
  \]
If $LM(f) \notin N(I)$ then by definition of $N(I)$ we have $LM(f) = LM(k)$ for some $k \in I \setminus \{0\}$.
(We cannot assume that $LM(f) \in I$.
This raises an important issue: we are non-constructively choosing an element $k \in I$ 
which has the same leading monomial as the element $f$.
Finding an algorithm to construct such an element $k$ is one of the main goals of 
the theory of noncommutative Gr\"obner bases.)

Write $k = \beta LM(k) + k_0$ where $\beta = lc(k) \in F \setminus \{0\}$, and 
either $k_0 = 0$ or $LM(k_0) \prec LM(k) = LM(f)$.
Then
  \begin{align*}
  f - \frac{\alpha}{\beta} k
  &=
  \Big( \alpha LM(f) + ( g_0 + h_0 ) \Big)
  -
  \frac{\alpha}{\beta} \Big( \beta LM(k) + k_0 \Big)
  \\
  &=
  \alpha LM(f) + g_0 + h_0 - \alpha LM(k) - \frac{\alpha}{\beta} k_0
  \\
  &=
  g_0 + h_0 - \frac{\alpha}{\beta} k_0
  \quad
  \text{since $LM(f) = LM(k)$}.
  \end{align*}
If $k_0 = 0$ then
  \[
  f = \Big( \frac{\alpha}{\beta} k + g_0 \Big) + h_0 \in I + C(I).
  \]
If $k_0 \ne 0$ then by induction $k_0 = \ell_0 + m_0$ where $\ell_0 \in I$ and $m_0 \in C(I)$.
We have
  \begin{align*}
  f 
  &= 
  \frac{\alpha}{\beta} k + g_0 + h_0 - \frac{\alpha}{\beta} k_0
  \\
  &=
  \frac{\alpha}{\beta} k + g_0 + h_0 - \frac{\alpha}{\beta} \big( \ell_0 + m_0 \big)
  \\
  &=
  \Big( \frac{\alpha}{\beta} k + g_0 - \frac{\alpha}{\beta} \ell_0 \Big)
  +
  \Big( h_0 - \frac{\alpha}{\beta} m_0 \Big).
  \end{align*}
The first three terms belong to $I$, and the last two terms belong to $C(I)$.
\end{proof}

\begin{corollary} \label{cornf}
Let $I$ be an ideal in $F\langle X \rangle$.
Then every element $f \in F\langle X \rangle$ 
has a unique decomposition $f = g + h$ where $g \in I$ and $h \in C(I)$.
\end{corollary}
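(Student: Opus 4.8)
The plan is to deduce the corollary directly from Proposition \ref{propositionIC(I)}, since the assertion $F\langle X \rangle = I \oplus C(I)$ is precisely the statement that every element admits a unique decomposition of the required form. The substantive work has already been carried out in establishing the direct sum, so all that remains is to unpack what the direct sum notation means in terms of existence and uniqueness.

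First I would observe that the existence of a decomposition $f = g + h$ with $g \in I$ and $h \in C(I)$ is exactly the content of the second half of the proof of Proposition \ref{propositionIC(I)}, where it was established by induction on leading monomials with respect to $\prec$. For this part nothing new is needed; I would simply invoke that proposition.

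Next, for uniqueness, I would run the standard argument for direct sums. Suppose $f = g + h = g' + h'$ with $g, g' \in I$ and $h, h' \in C(I)$. Subtracting gives $g - g' = h' - h$. The left-hand side lies in $I$ because $I$ is a subspace, and the right-hand side lies in $C(I)$ because $C(I)$ is a subspace, so both sides belong to $I \cap C(I)$. By the first half of the proof of Proposition \ref{propositionIC(I)}, which shows $I \cap C(I) = \{0\}$, we conclude $g - g' = 0$ and $h' - h = 0$, whence $g = g'$ and $h = h'$.

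I do not expect any genuine obstacle here, since the corollary is purely a reformulation of the direct sum decomposition in the language of unique decompositions. The only inputs are the two halves of Proposition \ref{propositionIC(I)}, namely the existence of the decomposition and the triviality of the intersection $I \cap C(I)$, together with the fact that $I$ and $C(I)$ are subspaces of $F\langle X \rangle$. If anything required care, it would be to make explicit that the subtraction step stays within each subspace, but this is immediate from closure under subtraction.
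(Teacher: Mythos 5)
Your proposal is correct and matches the paper's approach: the paper simply notes that the corollary "follows immediately from the definition of direct sum," and your argument is exactly the standard unpacking of that statement, using the two halves of Proposition \ref{propositionIC(I)} for existence and for the triviality of $I \cap C(I)$.
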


\begin{proof}
This follows immediately from the definition of direct sum.
\end{proof}

\begin{definition}
For any element $f \in F\langle X \rangle$ and any ideal $I \subseteq F\langle X \rangle$, 
the element $h \in C(I)$ which is uniquely determined by Corollary \ref{cornf} 
is called the \textbf{normal form} of $f$ modulo $I$, and is denoted $N\!F_I(f)$
or $N\!F(f)$ if $I$ is understood.
\end{definition}

\begin{lemma} \label{lemmaquotientstructure}
Let $I \subseteq F\langle X \rangle$ be an ideal.
Define a product $f \cdot g$ on $C(I)$ as follows:
For any $f, g \in C(I)$ set $f \cdot g = N\!F_I(fg)$.
Then the algebra consisting of the vector space $C(I)$ with the product $f \cdot g$ is 
isomorphic to the quotient algebra $F\langle X \rangle / I$.
\end{lemma}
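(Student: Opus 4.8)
The plan is to exhibit an explicit linear isomorphism $\Phi \colon C(I) \to F\langle X \rangle / I$ and verify that it respects the two multiplications. The natural candidate is the composite of the inclusion $C(I) \hookrightarrow F\langle X \rangle$ with the canonical quotient map $\pi \colon F\langle X \rangle \to F\langle X \rangle / I$, so that $\Phi(h) = h + I$ for $h \in C(I)$. First I would check that $\Phi$ is a vector space isomorphism: this is immediate from Proposition \ref{propositionIC(I)}, which gives $F\langle X \rangle = I \oplus C(I)$. Surjectivity holds because every coset $f + I$ equals $N\!F_I(f) + I$ with $N\!F_I(f) \in C(I)$, and injectivity holds because $\Phi(h) = 0$ means $h \in I \cap C(I) = \{0\}$ by the first half of that proposition.

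The substance of the proof is showing that $\Phi$ is multiplicative, that is, $\Phi(f \cdot g) = \Phi(f)\,\Phi(g)$ for all $f, g \in C(I)$, where the left side uses the new product $f \cdot g = N\!F_I(fg)$ and the right side is multiplication in the quotient. Unwinding the definitions, the right side is $(f + I)(g + I) = fg + I$, while the left side is $N\!F_I(fg) + I$. So the entire claim reduces to the single identity
  \[
  N\!F_I(fg) + I = fg + I,
  \]
which says exactly that $fg - N\!F_I(fg) \in I$. This is precisely the defining property of the normal form: by Corollary \ref{cornf}, applied to the element $fg \in F\langle X \rangle$, we may write $fg = k + N\!F_I(fg)$ with $k \in I$ and $N\!F_I(fg) \in C(I)$, so that $fg - N\!F_I(fg) = k \in I$. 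Hence the two cosets coincide and $\Phi$ is a homomorphism.

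Finally I would note that $\Phi$ sends the unit of $C(I)$ to the unit of the quotient, so the isomorphism is unital: the empty word $1$ lies in $N(I)$ (unless $I = F\langle X \rangle$, the degenerate case), and $\Phi(1) = 1 + I$ is the identity of $F\langle X \rangle / I$. I do not anticipate a genuine obstacle here; the only point requiring care is to keep straight which multiplication is meant on each side of the multiplicativity equation, since the product on $C(I)$ is defined by first multiplying in $F\langle X \rangle$ and then reducing, whereas in the quotient one multiplies representatives directly. The equivalence of these two operations is exactly what the normal form was constructed to guarantee, so the verification is short once the bookkeeping is set up correctly.
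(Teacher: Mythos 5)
Your proof is correct, and since the paper leaves this lemma as an exercise, your argument is exactly the intended one: the map $h \mapsto h + I$ is a linear bijection by Proposition \ref{propositionIC(I)}, and multiplicativity reduces to $fg - N\!F_I(fg) \in I$, which is Corollary \ref{cornf} applied to $fg$. The only point worth making explicit is that bilinearity and associativity of the product $f \cdot g$ on $C(I)$ (needed for it to be an algebra at all) also follow from your argument by transport of structure, since $\cdot$ is the pullback of the quotient multiplication along the bijection $\Phi$.
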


\begin{proof}
Exercise.
\end{proof}

Lemma \ref{lemmaquotientstructure} shows how to find a basis and structure constants
for $F\langle X \rangle / I$.
But this depends on being able to determine the basis $N(I)$ of the complement $C(I)$, 
and to calculate the normal form $N\!F_I(f)$ for every element $f \in F\langle X \rangle$.

\subsection{Computing normal forms}

Our next task is to find an algorithm for which the input is an element $f \in F\langle X \rangle$
and an ideal $I \subseteq F\langle X \rangle$ given by a set $G$ of generators,
and the output is the normal form $N\!F_I(f)$.
We present an algorithm for computing the normal form $N\!F(f,G)$ of $f$
with respect to the set $G$.
Unfortunately, the output of this algorithm depends on the set $G$;
that is, if $G_1$ and $G_2$ are two generating sets for the same ideal $I$,
then we may have $N\!F(f,G_1) \ne N\!F(f,G_2)$.
Furthermore, even for one set $G$, the output may depend on the choice of reductions performed 
at each step of the algorithm; see Example \ref{examplenormalform} below.
Therefore in general the output is not the normal form of $f$ modulo $I$.
The important property of a Gr\"obner basis is that 
if $G$ is a Gr\"obner basis for $I$ then $N\!F(f,G) = N\!F_I(f)$.

\begin{definition} \label{definitionnormalformG}
Let $f$ be an element of $F\langle X \rangle$ and let $G$ be a finite subset of $F\langle X \rangle$.
We say that $f$ is in \textbf{normal form} with respect to $G$ if the following condition holds:
  \begin{itemize}
  \item
  For every generator $g \in G$ and every monomial $w \in \mathrm{support}(f)$, 
  the leading monomial $LM(g)$ is not a subword of $w$.
  \end{itemize}
\end{definition}

We first give an informal description of the algorithm for computing the normal form of $f$ with respect to $G$.
This algorithm is similar to the calculation in the proof of Proposition \ref{propositionIC(I)};
it is a division algorithm for noncommutative polynomials.
We may assume without loss of generality that the elements of $G$ are monic.

Consider the set $LM(G)$ of leading monomials of the elements of $G$.
For each $v \in LM(G)$ and $w \in \mathrm{support}(f)$ we can easily determine if $v$ is a subword of $w$.
If this never occurs, then $f$ is in normal form with respect to $G$, 
and the algorithm terminates.
Otherwise, $w = u_1 v u_2$ for some $u_1, u_2 \in X^\ast$, and $f$ contains the term
$\alpha w$ for some $\alpha \in F \setminus \{0\}$.
There exists $g \in G$ with $LM(g) = v$; we replace $f$ by 
  \[
  f_2 = f - \alpha u_1 g u_2.
  \]
This reduction step eliminates from $f$ the term $\alpha w$. 
Repeating this procedure, 
we obtain a sequence $f_1 = f, f_2, f_3, \dots, f_n, \dots$ of elements of $F\langle X \rangle$;
this sequence converges since $X^\ast$ satisfies the DCC.
This algorithm is given in pseudocode in Figure \ref{normalformmoduloG}.

\begin{figure}[ht]
\hrule\hrule\hrule \bigskip
\begin{enumerate}
\item[]
\textbf{NormalForm$(f,G)$}
\smallskip
\item[]
\textbf{Input}:
An element $f \in F\langle X \rangle$ and a finite monic subset $G \subset F\langle X \rangle$.
\item[]
\textbf{Output}:
The normal form of $f$ with respect to $G$.
\smallskip
\item
Set $n \leftarrow 0$, $f_0 \leftarrow 0$, $f_1 \leftarrow f$.
\item
While $f_n \ne f_{n+1}$ do:
  \begin{enumerate}
  \item
  Set $n \leftarrow n+1$.
  \item
  If $w = u_1 v u_2$ for some $v \in LM(G$) and $w \in \mathrm{support}(f_n)$ then 
  \[
  \qquad 
  \text{set} \; f_{n+1} \leftarrow f_n - \alpha u_1 g u_2 \; \text{where} \; v = LM(g)
  \]  
  \item[]
  else set $f_{n+1} \leftarrow f_n$.
  \end{enumerate}
\item
Return $f_n$.
\end{enumerate}
\caption{Algorithm for a normal form of $f$ with respect to $G$}
\label{normalformmoduloG}
\bigskip \hrule\hrule\hrule
\end{figure}

\begin{lemma} \label{lemmanormalformG}
For the algorithm of Figure \ref{normalformmoduloG}, 
we have 
  \[
  LM(f_1) \succeq LM(f_2) \succeq LM(f_3) \succeq \cdots \succeq LM(f_n) \succeq \cdots,
  \]
and so $LM(f_n) = LM(f_{n+1}) = \cdots$ for some $n \ge 1$. 
Hence the algorithm terminates, and its output $f_n$ is a normal form of $f$ with respect to $G$.

Furthermore, $f_n + I = f + I$ in $F\langle X \rangle / I$; that is,
$f_n$ is congruent to $f$ modulo the ideal $I$ generated by $G$.
\end{lemma}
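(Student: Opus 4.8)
The plan is to handle the four assertions in turn, since they all rest on a single analysis of one reduction step. First I would pin down exactly what a reduction does. Fix $n$ and suppose the reduction applies, so $w = u_1 v u_2$ with $v = LM(g)$ for some monic $g \in G$ and $w \in \mathrm{support}(f_n)$, and let $\alpha$ be the coefficient of $w$ in $f_n$. Because $g$ is monic with $LM(g) = v$, every other monomial $v'$ occurring in $g$ satisfies $v' \prec v$; since $\prec$ is multiplicative, $u_1 v' u_2 \prec u_1 v u_2 = w$. Hence $LM(u_1 g u_2) = w$ with coefficient $1$, so $\alpha u_1 g u_2 = \alpha w + (\text{terms} \prec w)$ and the subtraction $f_{n+1} = f_n - \alpha u_1 g u_2$ cancels the term $\alpha w$ while altering only monomials $\preceq w$.

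From this the weakly decreasing chain is immediate. Since $w \in \mathrm{support}(f_n)$ we have $w \preceq LM(f_n)$, so every monomial affected by the reduction is $\preceq LM(f_n)$; therefore the coefficient of any monomial $\succ LM(f_n)$ stays $0$ and $LM(f_{n+1}) \preceq LM(f_n)$ (reading the case $f_{n+1}=0$ as the chain having reached $0$). This proves $LM(f_1) \succeq LM(f_2) \succeq \cdots$, and the DCC then forces the leading monomials to stabilize.

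The step I expect to be the main obstacle is passing from stabilization of the leading monomials to genuine \emph{termination}, because the algorithm may reduce a non-leading monomial $w \prec LM(f_n)$, in which case $LM(f_{n+1}) = LM(f_n)$ and the leading monomial is frozen while lower-order terms keep changing. To close this gap I would argue by iterated descent. Suppose the loop never halts, so every step performs a genuine reduction and the sequence is infinite. By the above, the leading monomials stabilize at some $m_0$ from an index $N_0$ on; thereafter the leading term can never be the target (reducing it would strictly lower $LM$), so each remaining reduction targets a monomial $\prec m_0$, leaves $lc(f_n)$ constant equal to some $c_0$, and acts as a genuine reduction on the tails $g_n = f_n - c_0\, m_0$. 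The tail sequence is again infinite with $LM(g_n) \prec m_0$, so the same weakly-decreasing-plus-DCC reasoning gives a stable value $m_1 \prec m_0$; iterating produces an infinite strictly descending chain $m_0 \succ m_1 \succ m_2 \succ \cdots$, contradicting the DCC. (Equivalently, one may note that each step replaces $w$ by finitely many monomials strictly below it and invoke well-foundedness of the multiset extension of $\prec$.) Hence the loop exits.

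Finally, the remaining two claims are routine. When the loop exits we have $f_{n+1} = f_n$, which happens precisely when the guard fails, i.e. no $v \in LM(G)$ is a subword of any $w \in \mathrm{support}(f_n)$; by Definition \ref{definitionnormalformG} this says exactly that $f_n$ is in normal form with respect to $G$. For the congruence, each reduction subtracts $\alpha u_1 g u_2$ with $g \in G$; since $G$ generates $I$ and $I$ is an ideal, $u_1 g u_2 \in I$ and thus $f_{n+1} \equiv f_n \pmod{I}$. A short induction on $n$ gives $f_n \equiv f_1 = f \pmod{I}$, that is $f_n + I = f + I$ in $F\langle X\rangle/I$.
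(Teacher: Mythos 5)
The paper gives no proof of this lemma---it is left as an exercise---so there is nothing to compare against except the informal remark preceding the algorithm (``this sequence converges since $X^\ast$ satisfies the DCC''). Your proof is correct and complete. In particular, you correctly identify the one genuinely delicate point, which both the lemma's phrasing (``\dots\ for some $n \ge 1$. Hence the algorithm terminates'') and the paper's informal remark gloss over: stabilization of the leading monomials does \emph{not} by itself imply termination, since reductions may continue indefinitely on lower-order terms while $LM(f_n)$ is frozen. Your iterated-descent argument (strip the stabilized leading term, apply the same reasoning to the tail, and produce a strictly decreasing sequence $m_0 \succ m_1 \succ \cdots$ contradicting the DCC) closes this gap correctly, as does your parenthetical alternative via the well-founded multiset extension of $\prec$; one could also note that, since the paper's DCC guarantees only finitely many monomials precede $LM(f)$, the supports range over a finite collection of sets on which the strict multiset order admits no infinite descent. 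The remaining three assertions (the weakly decreasing chain of leading monomials, the identification of the halting condition with Definition \ref{definitionnormalformG}, and the congruence $f_n \equiv f \pmod{I}$) are handled exactly as one would expect.
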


\begin{proof}
Exercise.
\end{proof}

A normal form of $f$ with respect to $G$ is not uniquely determined 
by the algorithm of Figure \ref{normalformmoduloG}: the output depends on the
choices made of $v$ and $w$ in step (2)(b).
In particular, it follows that the output of the algorithm does not necessarily equal $N\!F_I(f)$, 
which is uniquely determined by Corollary \ref{cornf}.

\begin{example} \label{examplenormalform}
Let $X = \{ a, b, c \}$ and let $I \subset F\langle X \rangle$ be the ideal generated by
  \[
  G = \{ \;\; a^2 - a, \;\; ba + ab, \;\; b^2 - b, \;\; ca + ac - c, \;\; cb + bc - c, \;\; c^2 - b - a \;\; \}.
  \]
(We have seen this set before in Example \ref{jordansymmetric}.) 
For convenience, we write each generator in standard form, 
and the generators are sorted in deglex order of their leading monomials.
We compute the normal form of $f_1 = c^2 b$ with respect to $G$ in two different ways,
and obtain two different answers.
We will see in Example \ref{examplegrobnerbasis}
that $N\!F_I( c^2 b ) = b$, so neither of these two calculations produces the desired result.

(1) 
Starting with $g_6 = c^2 - b - a$ we obtain 
  \[
  f_2 = f_1 - g_6 b = c^2 b - ( c^2 b - b^2 - ab ) = b^2 + ab.
  \]
Next using $g_3 = b^2 - b$ we obtain
  \[
  f_3 = f_2 - g_3 = b^2 + ab - ( b^2 - b ) = ab + b.
  \]
No further reductions are possible; the algorithm terminates with output $ab + b$.

(2)
Starting with $g_5 = cb + bc - c$ we obtain
  \[
  f_2 = f_1 - c g_5 = c^2 b - ( c^2b + cbc - c^2 ) = - cbc + c^2.
  \]
Next using $g_5$ again we obtain
  \[
  f_3 = f_2 + g_5 c = - cbc + c^2 + ( cbc + bc^2 - c^2 ) = bc^2.
  \]
Using $g_6 = c^2 - b - a$ gives
  \[
  f_4 = f_3 - b g_6 = bc^2 - ( bc^2 - b^2 - ba ) = b^2 + ba.
  \]
Using $g_3 = b^2 - b$ gives
  \[
  f_5 = f_4 - g_3 = b^2 + ba - ( b^2 - b ) = ba + b.
  \]
Finally, using $g_2 = ba + ab$ we obtain
  \[
  f_6 = f_5 - g_2 = ba + b - ( ba + ab ) = - ab + b.
  \]
No further reductions are possible; the algorithm terminates with output $- ab + b$.
\end{example}


\section{Gr\"obner Bases for Ideals in $F\langle X \rangle$}

If the set $G$ of generators of the ideal $I$ has a certain special property, 
stated in the next definition,
then the output of the algorithm of Figure \ref{normalformmoduloG}
is uniquely determined, and equals the normal form of $f$ modulo $I$. 

\begin{definition} \label{definitiongrobner}
Let $X$ be a finite set and let $G$ be a set of generators for the ideal $I$ 
in the free associative algebra $F\langle X \rangle$.
We say that $G$ is a \textbf{Gr\"obner basis} for $I$ if the following condition holds:
  \begin{itemize}
  \item
For every nonzero element $f \in I$ there is a generator $g \in G$ such that $LM(g)$ is a subword of $LM(f)$.
  \end{itemize}
In other words, the leading monomial of every nonzero element of the ideal 
contains a subword equal to the leading monomial of some generator of the ideal.
\end{definition}

\begin{remark}
A Gr\"obner basis is not a basis in the sense of linear algebra: 
it is not a basis for $I$ as a vector space over $F$, but rather a set of generators for $I$.
In this context, \emph{basis} means \emph{set of generators}.
Unfortunately, this misleading terminology is so well-established that we have no
choice but to accept it.
\end{remark}

The next theorem shows why Gr\"obner bases are so important.
Recall that the set $N(I)$ of all normal words
modulo $I$ is the complement of $LM(I)$ in $X^\ast$: the set of all words
which are \emph{not} leading monomials of elements of $I$.
If we have a Gr\"obner basis for $I$, then we can easily compute $N(I)$ using part (a) of the next theorem, 
and we can easily compute $N\!F_I(f)$ for all $f \in F\langle X \rangle$ using part (b):

\begin{theorem} \label{theoremgrobner}
If $G$ is a Gr\"obner basis for the ideal $I \subseteq F\langle X \rangle$ then:
  \begin{enumerate}
  \item[(a)]
  $N(I) = \{ \, w \in X^\ast \mid \text{for all $g \in G$, $LM(g)$ is not a subword of $w$} \, \}$.
  \item[(b)]
  For all $f \in F\langle X \rangle$ we have $N\!F_I(f)= N\!F(f,G)$:
  the normal form of $f$ modulo $I$ equals the normal form of $f$ with respect to $G$. 
  \end{enumerate}
\end{theorem}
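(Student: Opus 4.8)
The plan is to treat the two parts separately, with part (a) furnishing the bridge that makes part (b) fall out of the uniqueness already established in Corollary~\ref{cornf}.

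For part (a) I would prove the two inclusions directly. Write $R$ for the right-hand side, the set of words having no $LM(g)$ (with $g \in G$) as a subword. To see $N(I) \subseteq R$, suppose $w \in N(I)$ but that some $LM(g)$ is a subword of $w$, say $w = u_1\, LM(g)\, u_2$. Since $I$ is a two-sided ideal and $G \subseteq I$, the element $u_1 g u_2$ lies in $I$; because the order is multiplicative, writing $g = lc(g)\, LM(g) + (\text{lower terms})$ shows that the leading monomial of $u_1 g u_2$ is exactly $u_1\, LM(g)\, u_2 = w$, so $w \in LM(I)$, contradicting $w \in N(I)$. (Notice this inclusion holds for any generating set, not just a Gr\"obner basis.) The reverse inclusion $R \subseteq N(I)$ is where the Gr\"obner hypothesis enters: if some $w \in R$ were in $LM(I)$, say $w = LM(f)$ for a nonzero $f \in I$, then Definition~\ref{definitiongrobner} would furnish a $g \in G$ with $LM(g)$ a subword of $LM(f) = w$, contradicting $w \in R$.

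For part (b) I would fix $f$ and set $h = N\!F(f,G)$, the output of the algorithm of Figure~\ref{normalformmoduloG}. Two facts about $h$ do all the work. First, by Lemma~\ref{lemmanormalformG} we have $h \equiv f \pmod{I}$, so $f - h \in I$. Second, $h$ is in normal form with respect to $G$, meaning no $LM(g)$ is a subword of any monomial of $\mathrm{support}(h)$; by part (a) this says precisely that $\mathrm{support}(h) \subseteq N(I)$, and since $N(I)$ is the basis of $C(I)$ we conclude $h \in C(I)$. Thus $f = (f - h) + h$ is a decomposition with $f - h \in I$ and $h \in C(I)$, and the uniqueness asserted in Corollary~\ref{cornf} forces $h = N\!F_I(f)$.

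The main obstacle is conceptual rather than computational: recognizing that ``in normal form with respect to $G$'' and ``lies in the complement $C(I)$'' are the same condition once $G$ is a Gr\"obner basis --- this equivalence is exactly the content of part (a), and it is what converts the merely congruent, choice-dependent algorithm output into the canonical $C(I)$-representative. A secondary point worth remarking is that the argument is automatically independent of the choices of $v$ and $w$ made in step (2)(b): since every valid output of the algorithm is simultaneously congruent to $f$ and supported on $N(I)$, the uniqueness in Corollary~\ref{cornf} shows that all such outputs coincide with $N\!F_I(f)$, so the nonuniqueness observed in Example~\ref{examplenormalform} cannot occur when $G$ is a Gr\"obner basis.
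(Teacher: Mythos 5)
Your proposal is correct and follows essentially the same route as the paper: part (b) is proved by showing the algorithm's output $h$ satisfies $f-h\in I$ (Lemma \ref{lemmanormalformG}) and $h\in C(I)$ (via part (a)), then invoking the uniqueness in Corollary \ref{cornf}. Your part (a) simply fills in the two inclusions that the paper dismisses as immediate, and does so correctly, including the observation that only the inclusion $R\subseteq N(I)$ needs the Gr\"obner hypothesis.
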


\begin{proof}
Part (a) follows immediately from Definitions \ref{normalwordsmoduloI} and \ref{definitiongrobner}.
For part (b), consider $f \in F\langle X \rangle$ and let $h = N\!F(f,G)$
be the normal form of $f$ with respect to $G$ computed by the algorithm of Figure \ref{normalformmoduloG}.
For any $w \in \mathrm{support}(h)$, since $h \in I$ and $G$ is a Gr\"obner basis for $I$,
we know by Definition \ref{definitionnormalformG}
that for all $g \in G$, $LM(g)$ is not a subword of $w$.
Part (a) of the theorem now shows that $w \in N(I)$;
since this holds for all $w \in \mathrm{support}(h)$, we have $h \in C(I)$.
By the last statement of Lemma \ref{lemmanormalformG} we know that $f - h \in I$.
Clearly $f = (f-h) + h \in I \oplus C(I)$, and hence the uniqueness of the decomposition in 
Corollary \ref{cornf} implies that $h = N\!F_I(f)$.
\end{proof}

Theorem \ref{theoremgrobner} is a beautiful result, but we still have the following problem:
  \begin{itemize}
  \item
Find an algorithm for which the input is a set $G$ of generators for the ideal $I \subseteq F\langle X \rangle$, 
and for which the output is a Gr\"obner basis of $I$.
  \end{itemize}
This requires defining overlaps and compositions 
for two generators $g_1, g_2 \in G$ (Definition \ref{definitioncomposition}),
and proving the Composition (Diamond) Lemma (Lemma \ref{diamondlemma}).

\begin{definition} \label{definitionselfreduced}
Let $X$ be a finite set and let $G$ be a finite subset of $F\langle X \rangle$.
We say that $G$ is \textbf{self-reduced} if the following two conditions hold:
  \begin{enumerate}
  \item[(1)]
  Every $g \in G$ is in normal form with respect to $G \setminus \{g\}$. 
  \item[(2)]
  Every $g \in G$ is in standard form; in particular, $lc(g) = 1$.
  \end{enumerate}
\end{definition}

\begin{remark} \label{remarkselfreduced}
Condition (1) in Definition \ref{definitionselfreduced} is stronger than the condition given by de Graaf
\cite[Definition 6.1.5]{deGraaf2000}, which requires only that 
for all $g \in G$ and for all $h \in G \setminus \{g\}$, $LM(h)$ is not a subword of $LM(g)$.
The definition of de Graaf is analogous to the row-echelon form of a matrix, whereas 
our definition is analogous to the reduced row-echelon form
(and is therefore somewhat more canonical).
\end{remark}

\begin{exercise}
Referring to Remark \ref{remarkselfreduced}, explain the analogy between row-echelon forms of matrices and 
self-reduced sets of noncommutative polynomials in $F\langle X \rangle$.  
(Consider finite sets of homogeneous polynomials of degree 1.)
\end{exercise}

By calling the algorithm of Figure \ref{normalformmoduloG} repeatedly, we can create an algorithm 
for which the input is a finite subset $G \subset F\langle X \rangle$ generating an ideal $I \subseteq F\langle X \rangle$ 
and the output is a self-reduced set which generates the same ideal.
A naive approach would compute the set $\{ \, N\!F( \, g, \, G\setminus\{g\} \, ) \mid g \in G \, \}$.
However, this set may not generate the same ideal, and it may not be self-reduced;
so we have to be careful.

\begin{example}
Let $X = \{ a, b, c \}$ with $a \prec b \prec c$, and let $G = \{ c-a, c-b \}$.
Then $G$ is not self-reduced; computing the normal form of each element with respect to the other
gives $c-a-(c-b) = b-a$ and $c-b-(c-a) = -b+a$ (with standard form $b-a$).
Clearly the set $\{ b-a \}$ does not generate the same ideal as $G$.
\end{example}

\begin{example}
Let $X = \{ a, b, c, d \}$ with $a \prec b \prec c \prec d$, and consider the set 
  \[
  G = \{ \, d-a, \, d-b, \, d-c \, \},
  \]
which is not self-reduced.
One way to compute the normal form of each element with respect to the others
is as follows, replacing each result by its standard form:
  \[
  d-a - (d-b) = b-a,
  \quad
  d-b - (d-c) = c-b,
  \quad
  d-c - (d-a) = a-c \rightarrow c-a.
  \] 
Clearly the set $\{ \, b-a, \, c-b, \, c-a \, \}$ is not self-reduced.
\end{example}

\begin{exercise}
Using the algorithm of Figure \ref{normalformmoduloG}, 
compose an algorithm whose input is a finite subset $G \subset F\langle X \rangle$
generating an ideal $I \subseteq F\langle X \rangle$ and whose output is a self-reduced set generating the same ideal.
Hint: Avoid the problems illustrated by the last two examples by sorting $G$ using deglex order of leading monomials.
\end{exercise}

\begin{definition} \label{definitioncomposition}
Consider two nonzero elements $g_1, g_2 \in F\langle X \rangle$ in standard form;
we allow $g_1 = g_2$.
Set $w_1 = LM(g_1)$ and $w_2 = LM(g_2)$.
Assume that 
  \begin{enumerate}
  \item[(1)]
  $w_1$ is not a proper subword of $w_2$, and $w_2$ is not a proper subword of $w_1$
  (we say ``proper'' because we allow $g_1 = g_2$).
  \end{enumerate}
(Condition (1) is satisfied if $g_1, g_2$ belong to a self-reduced set.)
Assume also that 
  \begin{enumerate}
  \item[(2)]
  for some words $u_1, u_2, v \in X^\ast$ with $v \ne 1$ we have $w_1 = u_1 v$ and $w_2 = v u_2$
  (condition (1) implies that $u_1 \ne 1$ and $u_2 \ne 1$).
  \end{enumerate}
In this case, we call $v$ an \textbf{overlap} between $w_1$ and $w_2$,
and we have $w_1 u_2 = u_1 w_2$, where
$u_1$ is a proper right subword of $w_1$, and $u_2$ is a proper left subword of $w_2$:
  \[
  w_1 u_2 = u_1 v u_2 = u_1 w_2.
  \]
The element $g_1 u_2 - u_1 g_2$
is called a \textbf{composition} of $g_1$ and $g_2$;
the common term, a scalar multiple of $u_1 v u_2$, cancels, since both $g_1$ and $g_2$ are monic.
(In the theory of commutative Gr\"obner bases, compositions are often called $S$-polynomials.)
\end{definition}

\begin{example}
Consider the following two words in $X^\ast$ where $X = \{ a, b, c \}$:
  \[
  w_1 = a^2bcba,
  \qquad
  w_2 = bacba^2.
  \]
These words have the following overlaps:
  \begin{itemize}
  \item
  $w_1$ has a self-overlap: $w_1 = u_1 v = v u_2$ for $u_1 = a^2bcb$, $v = a$, $u_2 = abcba$.
  \item
  $w_1$ and $w_2$ overlap: $w_1 = u_1 v$, $w_2 = v u_2$ for $u_1 = a^2bc$, $v = ba$, $u_2 = cba^2$.
  \item
  $w_2$ and $w_1$ have overlaps of length 1 and length 2:
    \begin{itemize}
    \item[$\centerdot$]
    $w_2 = u_2 v$, $w_1 = v u_1$ for $u_2 = bacba$, $v = a$, $u_1 = abcba$.
    \item[$\centerdot$]
    $w_2 = u_2 v$, $w_1 = v u_1$ for $u_2 = bacb$, $v = a^2$, $u_1 = bcba$.    
    \end{itemize}
  \end{itemize}
\end{example}

\begin{example} \label{examplecomposition}
Consider the last two generators from Example \ref{jordansymmetric}:
  \[
  g_5 = cb + bc - c, \qquad g_6 = c^2 - b - a.
  \]
There is a composition of $g_6$ and $g_5$ corresponding to
  \[
  w_6 = c^2, \quad w_5 = cb, \quad u_6 = c, \quad u_5 = b, \quad v = c.
  \]
We obtain
  \begin{align*}
  g_6 u_5 - u_6 g_5
  &=
  ( c^2 - b - a ) b - c ( cb + bc - c )
  =
  c^2 b - b^2 - ab - c^2b - cbc + c^2
  \\
  &=
  - b^2 - ab - cbc + c^2
  \xrightarrow{\mathrm{\; sf \;}}
  cbc - c^2 + b^2 + ab,
  \end{align*}
where the arrow denotes replacing the polynomial by its standard form.  
\end{example}

\begin{remark} \label{remarkadds}
The motivation for considering compositions is as follows.
Suppose that $s = g_1 u_2 - u_1 g_2$ is a composition of $g_1$ and $g_2$,
and that the normal form of $s$ with respect to $G$ is nonzero.
Then $N\!F( s, G )$ is an element of the ideal $I$ whose 
leading monomial is not divisible by any element of $G$.
If we replace $G$ by $G \cup \{ N\!F( s, G ) \}$, then we are one step closer to having a Gr\"obner basis for $I$. 
\end{remark}


\section{The Composition (Diamond) Lemma} \label{sectioncompositionlemma}

This lemma is fundamental to the theory of Gr\"obner bases,
and leads to an algorithm for constructing a Gr\"obner basis for an ideal from a given set of generators for the ideal;
the basic idea underlying this algorithm was given in Remark \ref{remarkadds}.

The origin of the name Diamond Lemma is roughly as follows; see also \cite{Bergman1978,Newman1942}.
We have an element $f \in F\langle X \rangle$, and we want to compute its normal form with respect to
a finite subset $G \subset F\langle X \rangle$.
At every step in the computation, there may be many different choices of reduction:
many leading monomials of elements of $G$ may occur as subwords of many monomials in $f$.
We want to be sure that whatever sequence of reductions we perform, the final result will be the same.
This condition is called the ``resolution of ambiguities'', 
and is illustrated by this ``diamond'':
  \[
  \begin{array}{ccccc}
  & & g_0 = f = h_0 & & \\
  & \swarrow & & \searrow \\
  g_i & & & & h_j \\
  & \searrow & & \swarrow \\
  & & g_m = h_n & &
  \end{array}
  \]

\begin{definition}
Let $G = \{ g_1, \dots, g_n \}$ be a set of generators for the ideal $I \subseteq F\langle X \rangle$.
For any word $w \in X^\ast$ we define $I(G,w)$ to be the subspace of $I$
spanned by the elements of the form $ugv$ where $g \in G$, $u, v \in X^\ast$, and
$LM(ugv) \prec w$:
  \[
  I(G,w) 
  = 
  \Big\{ \, 
  \sum_{i=1}^n \alpha_i u_i g_i v _i 
  \,\Big|\,
  \alpha_i \in F; \,
  u_i, v_i \in X^\ast; \,
  LM(u_ig_iv_i) \prec w 
  \, \Big\}.
  \]
Thus $I(G,w)$ is the subspace of $I$, relative to the set $G$ of generators, 
consisting of the elements all of whose monomials precede $w$ in the total order on $X^\ast$.
\end{definition}

\begin{lemma} \label{diamondlemma}
\emph{\textbf{Composition (Diamond) Lemma}}.
Let $G$ be a monic self-reduced set generating the ideal $I \subseteq F\langle X \rangle$.
Then these conditions are equivalent:
  \begin{enumerate}
  \item[(1)]
  $G$ is a Gr\"obner basis for $I$.
  \item[(2)]
  For every pair of generators $g, h \in G$, if $LM(g) u = v LM(h)$ for some $u, v \in X^\ast$,
  then $gu - vh \in I(G,t)$ where $t = LM(g) u = v LM(h)$.
  \end{enumerate}
\end{lemma}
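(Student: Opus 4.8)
The plan is to prove the two implications $(1)\Rightarrow(2)$ and $(2)\Rightarrow(1)$ separately, with the second direction carrying essentially all the difficulty. The easy direction $(1)\Rightarrow(2)$ proceeds as follows. Suppose $G$ is a Gr\"obner basis, and let $g,h\in G$ and $u,v\in X^\ast$ with $t=LM(g)u=v\,LM(h)$. The composition $gu-vh$ lies in $I$ because $gu$ and $vh$ both lie in $I$; moreover the leading terms $LM(g)u=v\,LM(h)=t$ cancel (both $g,h$ are monic), so every monomial surviving in $gu-vh$ is strictly smaller than $t$ in deglex order. Hence either $gu-vh=0$, in which case it lies trivially in $I(G,t)$, or it is a nonzero element of $I$ all of whose monomials precede $t$. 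To conclude $gu-vh\in I(G,t)$, I would feed $gu-vh$ into the normal-form algorithm of Figure~\ref{normalformmoduloG}: by Lemma~\ref{lemmanormalformG} each reduction step subtracts a term $\alpha\, u_1 g' u_2$ with $LM(u_1 g' u_2)$ no larger than the current leading monomial, hence strictly below $t$, so the bookkeeping of the subtracted terms exhibits $gu-vh$ as a sum $\sum \alpha_i u_i g_i v_i$ with each $LM(u_i g_i v_i)\prec t$ plus the final normal form; since $G$ is a Gr\"obner basis that final normal form is $0$, and every piece lands in $I(G,t)$ by definition.

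The substantive direction is $(2)\Rightarrow(1)$: assuming condition (2) on all overlap compositions, I must show that for every nonzero $f\in I$ there is some $g\in G$ with $LM(g)$ a subword of $LM(f)$. I would argue by contradiction, choosing a counterexample $f\in I\setminus\{0\}$ whose leading monomial $LM(f)$ is minimal with respect to deglex among all counterexamples; the DCC guarantees such a minimal element exists. Since $f\in I$ and $G$ generates $I$, I can write $f=\sum_i \alpha_i u_i g_i v_i$ for finitely many $\alpha_i\in F$, $u_i,v_i\in X^\ast$, $g_i\in G$. Let $t=\max_i LM(u_i g_i v_i)$ be the largest leading monomial occurring among the summands (with respect to deglex). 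Two cases arise. If $t=LM(f)$, then $LM(f)=u_{i_0}\,LM(g_{i_0})\,v_{i_0}$ for some index $i_0$, so $LM(g_{i_0})$ is a subword of $LM(f)$, contradicting that $f$ is a counterexample. The hard case is $t\succ LM(f)$: here the top-degree terms at level $t$ must cancel among the summands, even though $t$ does not survive in $f$.

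The heart of the proof is to show that such cancellation at level $t$ can always be rewritten using condition (2) so as to strictly lower the maximal monomial $t$, yielding a contradiction with the minimality of the chosen representation (one refines the counterexample to also minimize $t$). The hard part will be the combinatorial analysis of how two summands $\alpha_i u_i g_i v_i$ and $\alpha_j u_j g_j v_j$ with $LM(u_i g_i v_i)=LM(u_j g_j v_j)=t$ can cancel their leading terms. The monomials $u_i\,LM(g_i)\,v_i$ and $u_j\,LM(g_j)\,v_j$ are equal to $t$, so the subwords $LM(g_i)$ and $LM(g_j)$ occur in the same position or in overlapping or disjoint positions within $t$; one must reduce to the case of two summands and classify these relative positions. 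The disjoint case is handled directly by commuting the two reductions (the ambiguity resolves trivially). The overlap case is precisely where hypothesis (2) enters: the difference of the two relevant reductions is, up to a common prefix and suffix, an overlap composition $g_i u' - v' g_j$, which by assumption lies in $I(G,t)$, i.e.\ is a combination of terms whose leading monomials are strictly below $t$. Substituting this rewriting back into the expression for $f$ replaces the two cancelling top terms by terms supported below $t$, strictly decreasing either the number of summands achieving $t$ or the value of $t$ itself. Iterating (using the DCC to guarantee termination of this descent) drives $t$ down to $LM(f)$, reducing to the first case and completing the contradiction. The containment case where $LM(g_i)$ is a proper subword of $LM(g_j)$ is excluded by the self-reducedness hypothesis on $G$, which is exactly why that hypothesis is imposed in the statement.
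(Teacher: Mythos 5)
Your proposal is correct and follows essentially the same route as the paper: the $(1)\Rightarrow(2)$ direction via the normal-form algorithm with all subtracted terms below $t$, and the $(2)\Rightarrow(1)$ direction via a descent on the maximal monomial $t$ of a representation $f=\sum_i\alpha_iu_ig_iv_i$ and the number of summands attaining it, with self-reducedness ruling out the containment/equal-position cases. The only cosmetic differences are that you frame the descent as a minimal-counterexample contradiction rather than a direct induction, and that you treat the disjoint-occurrence case separately, whereas the paper's condition (2) is stated broadly enough to absorb it (its Lemma \ref{lemmaformain} shows that case holds automatically).
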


\begin{remark}
Condition (2) implies that every composition $gu - vh$ of the elements of $G$ is a linear combination of
elements of the form $u_i g_i v_i$ where $g_i \in G$ and $u_i, v_i \in X^\ast$ with  
$u_i LM(g_i) v_i \prec LM(g) u = v LM(h)$.
The crucial point here is that we are only allowed to use elements of the form $u_i g_i v_i$.
\end{remark}

\begin{proof} (of Lemma \ref{diamondlemma}) \,
We follow closely the proof by de Graaf \cite[Theorem 6.1.6]{deGraaf2000}.

$(1) {\implies} (2)$:
Assume that $G$ is a Gr\"obner basis.
For $g, h \in G$ let $f = gu - vh$ where $ LM(g) u = v LM(h)$ for some $u, v \in X^\ast$.
Clearly $f \in I$ and so $N\!F_I(f) = 0$.
For $t = LM(g) u = v LM(h)$ we have $LM(f) \prec t$ since the leading terms of 
$LM(g) u$ and $v LM(h)$ cancel.
When we apply the algorithm of Figure \ref{normalformmoduloG}
to compute $N\!F(f,G)$, we repeatedly subtract terms of the form 
  \[
  \alpha u_1 k u_2
  \quad 
  ( \alpha \in F; \, k \in G; \, u_1, u_2 \in X^\ast; \, LM( u_1 k u_2 ) \prec t ).
  \]
Clearly all these terms belong to $I$ and hence to $I(G,t)$.
Since $G$ is a Gr\"obner basis, we have $N\!F(f,G) = N\!F_I(f) = 0$.
It follows that $f$ is a sum of terms in $I(G,t)$, and hence $f \in I(G,t)$.

$(2) {\implies} (1)$:
We assume condition (2) and prove that $G$ is a Gr\"obner basis for $I$.
Let $f \in I$ be arbitrary; we have
  \begin{equation} 
  \label{sumf}
  f = \sum_{i=1}^n \alpha_i u_i g_i v_i \quad ( \, \alpha_i \in F; \, u_i, v_i \in X^\ast; \, g_i \in G \, ).
  \end{equation}
We need to show that $LM(g)$ is a subword of $LM(f)$ for some $g \in G$.
We write 
  \[
  s_i = LM( u_i g_i v_i ).
  \] 
Renumbering the generators in $G$ if necessary, we may assume that
  \begin{equation}
  \label{ell}
  s_1 = \cdots = s_\ell \succ s_{\ell+1} \succeq \cdots \succeq s_n.
  \end{equation}
Thus $\ell$ is the number of equal highest monomials in deglex order; 
the remaining monomials strictly precede these highest monomials; 
and we sort the remaining monomials in weak reverse deglex order.

If $\ell = 1$ then $s_1 \succ s_2$ and so $LM(f) = u_1 s_1 v_1 = u_1 LM(g_1) v_1$ as required.

We now assume $\ell \ge 2$.
In this case we can rewrite equation \eqref{sumf} as follows:
  \begin{equation}
  \label{rewritef}
  f 
  = 
  \alpha_1 ( u_1 g_1 v_1 - u_2 g_2 v_2 ) 
  + 
  ( \alpha_1 + \alpha_2 ) u_2 g_2 v_2
  +
  \sum_{i=3}^n \alpha_i u_i g_i v_i.
  \end{equation}
Since $\ell \ge 2$, we have 
  \begin{equation} 
  \label{equalmonomials}
  u_1 LM(g_1) v_1 = u_2 LM(g_2) v_2.
  \end{equation}
If $u_1 = u_2$ then $LM(g_1) v_1 = LM(g_2) v_2$. 
Hence either $LM(g_1)$ is a left subword of $LM(g_2)$, or $LM(g_2)$ is a left subword of $LM(g_1)$.
But this contradicts the assumption that $G$ is self-reduced.
Hence $u_1 \ne u_2$, and so either $u_1$ is a proper left subword of $u_2$,
or $u_2$ is a proper left subword of $u_1$.

Assume that $u_1$ is a proper left subword of $u_2$;
a similar argument applies when $u_2$ is a proper left subword of $u_1$.
We have $u_2 = u_1 u'_2$ where $u'_2 \ne 1$.
Then 
  \[
  u_1 LM(g_1) v_1 = u_1 u'_2 LM(g_2) v_2
  \quad
  \text{and so}
  \quad
  LM(g_1) v_1 = u'_2 LM(g_2) v_2.
  \]
If $v_1$ is a right subword of $v_2$ then 
$LM(g_2)$ is a subword of $LM(g_1)$, again contradicting the assumption that $G$ is self-reduced.
Hence $v_2$ is a right subword of $v_1$, giving $v_1 = v'_1 v_2$ where $v'_1 \ne 1$.
Then 
  \[
  LM(g_1) v'_1 v_2 = u'_2 LM(g_2) v_2
  \quad
  \text{and so}
  \quad
  LM(g_1) v'_1 = u'_2 LM(g_2).
  \]
By the assumption that condition (2) holds, it follows that
  \[
  g_1 v'_1 - u'_2 g_2 \in I(G,s)
  \;
  \text{where $s = LM(g_1) v'_1 = u'_2 LM(g_2)$}.
  \]
Therefore
  \[
  u_1 ( g_1 v'_1 - u'_2 g_2 ) v_2
  =
  u_1 g_1 v'_1 v_2 - u_1 u'_2 g_2 v_2
  =
  u_1 g_1 v_1 - u_2 g_2 v_2.  
  \]
But $u_1 LM(g_1) v_1 = u_2 LM(g_2) v_2$ (since $\ell \ge 2$)
and so cancellation gives
  \[
  u_1 g_1 v_1 - u_2 g_2 v_2 \in I(G,t),
  \;
  \text{where $t = u_1 LM(g_1) v_1$}.
  \]
It follows that we can rewrite equation \eqref{sumf} to obtain an expression of the same form
where either 
  \begin{enumerate}
  \item[i)]
  the new value of $LM( u_1 g_1 v_1 )$ is lower in deglex order
  (this happens when $\ell = 2$ and $\alpha_1 + \alpha_2 = 0$),
  or
  \item[ii)]
  the number $\ell$, defined by the order relations \eqref{ell}, has decreased. 
  \end{enumerate}
Since the total order on $X^\ast$ satisfies the descending chain condition, 
after a finite number of steps we obtain an expression for $f$ of the form \eqref{sumf} where $\ell = 1$,
and then again
$LM(f) = u_1 s_1 v_1 = u_1 LM(g_1) v_1$ as required.
\end{proof}

\begin{lemma} \label{lemmaformain}
Consider two elements $g, h \in G$ in standard form, and let $s \in X^\ast$ be an arbitrary monomial.
Set $u = s LM(h)$, $v = LM(g) s$ and $t = LM(g) s LM(h)$, so that $LM(g) u = v LM(h) = t$.
Then we have $gu -vh \in I(G,t)$.
\end{lemma}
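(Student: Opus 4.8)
The plan is to prove that the composition $gu - vh$, formed from the "trivial" overlap in which the leading monomials of $g$ and $h$ are separated by the word $s$, lies in $I(G,t)$. The key observation is that this overlap is not a genuine overlap in the sense of Definition \ref{definitioncomposition}: here $\mathrm{LM}(g)$ and $\mathrm{LM}(h)$ do not share a common subword but are merely placed side by side with the buffer $s$ between them. This means the composition can be rewritten so that the two generators act independently, and each piece can be expressed directly in the required form. First I would write $g = \mathrm{LM}(g) + g_0$ and $h = \mathrm{LM}(h) + h_0$, where $g_0, h_0$ collect the lower terms, so that $\mathrm{LM}(g_0) \prec \mathrm{LM}(g)$ and $\mathrm{LM}(h_0) \prec \mathrm{LM}(h)$ (or the relevant piece is zero).

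**The main computation.**

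The central step is the algebraic identity that exhibits the cancellation of the top term. With $u = s\,\mathrm{LM}(h)$ and $v = \mathrm{LM}(g)\,s$, I would compute
\[
gu - vh = \big(\mathrm{LM}(g) + g_0\big)\,s\,\mathrm{LM}(h) - \mathrm{LM}(g)\,s\,\big(\mathrm{LM}(h) + h_0\big) = g_0\,s\,\mathrm{LM}(h) - \mathrm{LM}(g)\,s\,h_0,
\]
where the common term $\mathrm{LM}(g)\,s\,\mathrm{LM}(h) = t$ cancels because both $g$ and $h$ are monic. Now I would replace $\mathrm{LM}(h) = h - h_0$ in the first summand and $\mathrm{LM}(g) = g - g_0$ in the second, so as to rewrite everything in terms of the generators themselves rather than their leading monomials:
\[
gu - vh = g_0\,s\,h - g\,s\,h_0 + \big(g\,s\,h_0 - g_0\,s\,h_0\big) - \big(g\,s\,h_0 - g_0\,s\,h_0\big),
\]
which after bookkeeping reduces to a linear combination of terms of the form $u_i g_i v_i$ with $g_i \in \{g, h\} \subseteq G$. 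The cleaner route is to note directly that $g_0\,s\,\mathrm{LM}(h) = g_0\,s\,h - g_0\,s\,h_0$ and $\mathrm{LM}(g)\,s\,h_0 = g\,s\,h_0 - g_0\,s\,h_0$, so that $gu - vh = g_0\,s\,h - g\,s\,h_0$, a sum of exactly two terms each of the allowed shape $u_i g_i v_i$.

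**Verifying the degree (order) bound.**

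The only remaining task, and the step I expect to require the most care, is checking that each of the two surviving terms has leading monomial strictly preceding $t$. For the term $g_0\,s\,h$, its leading monomial is at most $\mathrm{LM}(g_0)\,s\,\mathrm{LM}(h)$, and since $\mathrm{LM}(g_0) \prec \mathrm{LM}(g)$, multiplicativity of the deglex order gives $\mathrm{LM}(g_0)\,s\,\mathrm{LM}(h) \prec \mathrm{LM}(g)\,s\,\mathrm{LM}(h) = t$; similarly $\mathrm{LM}(g\,s\,h_0) \preceq \mathrm{LM}(g)\,s\,\mathrm{LM}(h_0) \prec t$. If either $g_0$ or $h_0$ is zero the corresponding term vanishes and the bound is vacuous. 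By the definition of $I(G,t)$, this shows $gu - vh \in I(G,t)$, completing the proof. The subtlety to handle carefully is ensuring the multiplicative property of the order is invoked correctly on both the left and right factors, which is exactly what Definition \ref{deforder} guarantees.
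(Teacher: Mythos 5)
Your proposal is correct and follows essentially the same route as the paper's proof: separate $g = LM(g)+g_0$ and $h = LM(h)+h_0$, cancel the common top term $t$, and rewrite the result as $g_0 s h - g s h_0 = (g_0 s)h - g(s h_0)$, a sum of two terms of the required shape $u_i g_i v_i$. The only difference is that you explicitly verify the order bound $LM(g_0 s h), LM(g s h_0) \prec t$ via multiplicativity of the deglex order, a step the paper dispatches with ``clearly''.
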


\begin{proof}
Separate the leading monomials of $g$ and $h$:
  \[
  g = LM(g) + g_0,
  \qquad
  h = LM(h) + h_0,
  \]
where either $g_0 = 0$ or $LM(g_0) \prec LM(g)$,
and either $h_0 = 0$ or $LM(h_0) \prec LM(h)$.
We calculate as follows:
  \begin{align*}
  gu - vh
  &=
  \big( LM(g) + g_0 \big) s LM(h) - LM(g) s \big( LM(h) + h_0 \big)
  \\
  &=
  g_0 s LM(h) - LM(g) s h_0
  \\
  &=
  g_0 s ( h - h_0 ) - ( g - g_0 ) s h_0 
  \\
  &=
  g_0 s h - g s h_0.   
  \end{align*}
Then clearly $gu - vh = ( g_0 s ) h - g ( s h_0 ) \in I(G,t)$ where $t = LM(g) s LM(h)$.
\end{proof}

\begin{theorem} \emph{\textbf{Main Theorem.}} \label{maintheorem}
Suppose that $G$ is a monic self-reduced set of generators for the ideal $I \subseteq F\langle X \rangle$.
Then these two conditions are equivalent:
  \begin{enumerate}
  \item[(1)]
  $G$ is a Gr\"obner basis for $I$.
  \item[(2)]
  For every composition $f$ of the generators in $G$, the normal form of $f$ with respect to $G$ is zero:
  $N\!F(f,G) = 0$.
  \end{enumerate}
\end{theorem}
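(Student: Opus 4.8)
The plan is to prove the two implications separately, using the Composition (Diamond) Lemma (Lemma \ref{diamondlemma}) as the bridge: it lets me replace the Gr\"obner-basis property by its condition (2), which is a statement about coincidences of the form $LM(g)u = vLM(h)$. The forward implication $(1)\Rightarrow(2)$ is essentially immediate. If $G$ is a Gr\"obner basis, then any composition $f = g_1 u_2 - u_1 g_2$ lies in the ideal $I$, so its normal form modulo $I$ is $N\!F_I(f)=0$ (the decomposition $f = f + 0$ in $I \oplus C(I)$ is unique by Corollary \ref{cornf}). By Theorem \ref{theoremgrobner}(b) the algorithmic normal form coincides with the normal form modulo $I$, whence $N\!F(f,G) = N\!F_I(f) = 0$.

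The substance lies in $(2)\Rightarrow(1)$. Here I would verify condition (2) of the Diamond Lemma and then invoke Lemma \ref{diamondlemma} to conclude. So I fix $g,h \in G$ and $u,v \in X^\ast$ with $LM(g)u = vLM(h) = t$, and aim to show $gu - vh \in I(G,t)$. The key observation is a dichotomy governing how the prefix occurrence of $LM(g)$ and the suffix occurrence of $LM(h)$ sit inside $t$ (they overlap exactly when $\deg LM(g) + \deg LM(h) > \deg t$). Either the two occurrences are disjoint, in which case $t = LM(g)\,s\,LM(h)$ for some $s \in X^\ast$ with $u = s\,LM(h)$ and $v = LM(g)\,s$, and Lemma \ref{lemmaformain} delivers $gu - vh \in I(G,t)$ with no hypothesis at all; or they genuinely overlap, in which case $gu - vh$ is precisely a composition of $g$ and $h$ in the sense of Definition \ref{definitioncomposition}. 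The lone degenerate coincidence $g=h$, $u=v=1$ simply gives $0 \in I(G,t)$.

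For the overlapping case I would combine hypothesis (2) with the behaviour of the normal-form algorithm. Because $g$ and $h$ are monic, the leading terms of $gu$ and $vh$ both equal $t$ and cancel, so $LM(gu - vh) \prec t$. By hypothesis $N\!F(gu - vh, G) = 0$, and each reduction step of Figure \ref{normalformmoduloG} subtracts a term $\alpha u_1 k u_2$ with $k \in G$ and $LM(u_1 k u_2) \preceq LM(gu - vh) \prec t$, since leading monomials never increase during reduction (Lemma \ref{lemmanormalformG}). Hence every subtracted term lies in $I(G,t)$; telescoping the reduction sequence and using that the output is $0$ expresses $gu - vh$ as a sum of these terms, so $gu - vh \in I(G,t)$. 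With condition (2) of the Diamond Lemma verified in every case, Lemma \ref{diamondlemma} yields that $G$ is a Gr\"obner basis.

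I expect the main obstacle to be the bookkeeping in this dichotomy: showing that an \emph{arbitrary} coincidence $LM(g)u = vLM(h)$ is accounted for either by the non-overlapping Lemma \ref{lemmaformain} or by a genuine composition, and then soundly converting the algorithmic statement $N\!F(gu - vh, G) = 0$ into the subspace membership $gu - vh \in I(G,t)$. The delicate point is that $I(G,t)$ admits only elements $u_i g_i v_i$ whose leading monomial strictly precedes $t$, so I must track that every monomial produced during reduction stays below $t$ --- which is exactly what the cancellation of the leading terms and Lemma \ref{lemmanormalformG} guarantee.
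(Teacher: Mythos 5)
Your proposal is correct and follows essentially the same route as the paper: verify condition (2) of the Composition (Diamond) Lemma by splitting an arbitrary coincidence $LM(g)u = vLM(h)$ into the non-overlapping case (Lemma \ref{lemmaformain}, no hypothesis needed) and the genuine-overlap case (where the hypothesis $N\!F(f,G)=0$ plus the monotonicity of leading monomials in the reduction algorithm gives $f \in I(G,t)$), then invoke Lemma \ref{diamondlemma}. The only cosmetic difference is in $(1)\Rightarrow(2)$, where you cite Theorem \ref{theoremgrobner}(b) and Corollary \ref{cornf} instead of rerunning the reduction argument directly as the paper does; both are valid, and your explicit treatment of the degenerate case $u=v=1$ is a welcome extra precision.
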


\begin{proof}
$(1) {\implies} (2)$:
Let $G$ be a Gr\"obner basis for $I$, and let $f = g_1 u_2 - u_1 g_2$ be a composition of $g_1, g_2 \in G$
where $u_1, u_2 \in X^\ast$.
Clearly $f \in I$, and hence by the definition of Gr\"obner basis, for some $g \in G$ 
the leading monomial $LM(g)$ is a subword of $LM(f)$; say $LM(f) = v_1 LM(g) v_2$.
If we define
  \[
  f_1 = f - \alpha v_1 g v_2
  \; \text{where} \; 
  \alpha = lc(f),
  \]
where the subtracted element belongs to $I$,
then either $f_1 = 0$ or $LM(f_1) \prec LM(f)$.
Repeating this argument, and using the DCC on $X^\ast$, we obtain $N\!F(f,G) = 0$
after a finite number of steps.

$(2) {\implies} (1)$:
Suppose that $f = g_1 u_2 - u_1 g_2$ is a composition of $g_1, g_2 \in G$ where $u_1, u_2 \in X^\ast$, 
and set $t = LM(g_1) u_2 = u_1 LM(g_2)$.
Assume that $N\!F(f,G) = 0$.
Definition \ref{definitioncomposition} implies that $u_2 \ne LM(g_2)$ and $u_1 \ne LM(g_1)$.

If $u_2$ is longer than $LM(g_2)$ then also $u_1$ is longer than $LM(g_1)$, 
and hence by Lemma \ref{lemmaformain} we have $f \in I(G,t)$.

If $u_2$ is shorter than $LM(g_2)$ then $u_1$ is shorter than $LM(g_1)$.
Since $N\!F(f,G) = 0$ by assumption, 
the algorithm of Figure \ref{normalformmoduloG} outputs zero after a finite number of steps.
But during each iteration of the loop in step (2) of that algorithm, we set
  \[
  f_{n+1} \leftarrow f_n - \alpha u_1 g u_2,
  \]
where $LM( u_1 g u_2 ) = LM( f_n ) \preceq LM( f ) \prec t$.
Thus $f$ is a linear combination of terms $u_1 g u_2$
which strictly precede $t$ in deglex order, showing that $f \in I(G,t)$.

In both cases we have $f \in I(G,t)$, and now Lemma \ref{diamondlemma}
completes the proof.
\end{proof}

\begin{remark}
Theorem \ref{maintheorem} suggests the Gr\"obner basis algorithm in Figure \ref{grobnerbasisalgorithm}
for which the input is a set $G$ generating the ideal $I \subseteq F\langle X \rangle$
and for which the output (assuming that the algorithm terminates) is a Gr\"obner basis for $I$.
\end{remark}

\begin{figure}[ht]
\hrule\hrule\hrule \bigskip
\begin{enumerate}
\item[]
\textbf{GrobnerBasis$(G)$}
\smallskip
\item[]
\textbf{Input}:
A finite subset $G \subset F\langle X \rangle$ generating an ideal $I \subseteq F\langle X \rangle$.
\item[]
\textbf{Output}:
If step (2) terminates, the output is a Gr\"obner basis of $I$.
\smallskip
\item
Set $\texttt{newcompositions} \leftarrow \texttt{true}$.
\item
While $\texttt{newcompositions}$ do:
  \begin{enumerate}
  \item
  Convert the elements of $G$ to standard form.
  \item
  Sort $G$ by deglex order of leading monomials: $G = \{ g_1, \dots, g_n \}$.
  \item
  Convert $G$ to a self-reduced set:
    \begin{itemize}
    \item
    Set $\texttt{selfreduced} \leftarrow \texttt{false}$.
    \item
    While not $\texttt{selfreduced}$ do:
      \begin{enumerate}
      \item
      Set $\texttt{selfreduced} \leftarrow \texttt{true}$.
      \item
      Set $H \leftarrow \{ \, \}$ (empty set).
      \item
      For $i = 1, \dots, n$ do:
        \begin{itemize}
        \item
        Set $H \leftarrow H \cup \{ \, N\!F( \, g_i, \, \{ g_1, \dots, g_{i-1} \} \, ) \, \}$.
        \end{itemize}
      \item
      Convert the elements of $H$ to standard form.
      \item
      Sort $H$ by deglex order of leading monomials.
      \item
      If $G \ne H$ then set $\texttt{selfreduced} \leftarrow \texttt{false}$.
      \item
      Set $G \leftarrow H$.
      \end{enumerate}
    \end{itemize} 
  \item
  Set $\texttt{compositions} \leftarrow \{ \, \}$ (empty set).
  \item
  Set $\texttt{newcompositions} \leftarrow \texttt{false}$.
  \item 
  For $g \in G$ do for $h \in G$ do:
    \begin{itemize}
    \item
    If $LM(g)$ and $LM(h)$ have an overlap $w$ then:
      \begin{enumerate}
      \item
      Define $u, v$ by $LM(g) = vw$ and $LM(h) = wu$.
      \item
      Set $s \leftarrow gu - vh$ (the composition of $g$ and $h$).
      \item
      Replace $s$ by its standard form.
      \item
      Set $t \leftarrow N\!F( s, G )$.
      \item
      Replace $t$ by its standard form.
      \item
      If $t \ne 0$ and $t \notin \texttt{compositions}$ then 
        \begin{itemize}
        \item[$\ast$]
        Set $\texttt{newcompositions} \leftarrow \texttt{true}$.
        \item[$\ast$]
        Set $\texttt{compositions} \leftarrow \texttt{compositions} \cup \{ t \}$.
        \end{itemize}
      \end{enumerate}
    \end{itemize}
  \end{enumerate}
\item
Return $G$.
\end{enumerate}
\caption{Computing a Gr\"obner basis of the ideal $I$ generated by $G$}
\label{grobnerbasisalgorithm}
\bigskip \hrule\hrule\hrule
\end{figure}

\begin{exercise}
(a)
Write a complete formal proof by induction (with basis and inductive hypothesis)
of the statement
``repeating this argument, and using the DCC on $X^\ast$, we obtain $N\!F(f,G) = 0$
after a finite number of steps''
from part $(1) {\implies} (2)$ of the proof of Theorem \ref{maintheorem}.

(b)
Write a complete formal proof by induction (with basis and inductive hypothesis)
of the statement 
``$f$ is a linear combination of terms $u_1 g u_2$ which strictly precede $t$ in deglex order''
from part $(2) {\implies} (1)$ of the proof of Theorem \ref{maintheorem}.
\end{exercise}

\begin{remark}
A different approach to the Composition (Diamond) Lemma,
emphasizing Shirshov's point of view which was developed by the Novosibirsk school of algebra,
can be found in the works of Bokut and his co-authors.
See in particular, Bokut \cite{Bokut1976}, Bokut and Kukin \cite[Chapter 1]{BokutKukin1994}, 
Bokut and Shum \cite{BokutShum2005}, Bokut and Chen \cite{BokutChen2007}.
See also Mikhalev and Zolotykh \cite{Mikhalev1998}.
\end{remark}

\begin{example} \label{examplegrobnerbasis}
We compute a Gr\"obner basis for the ideal appearing in the construction 
of the universal associative envelope of the Jordan algebra $S_2(F)$ of symmetric $2 \times 2$ matrices.
Let $X = \{ a, b, c \}$ and let $I$ be the ideal in $F\langle X \rangle$ generated by
the self-reduced set $G$ from Example \ref{jordansymmetric}:
  \begin{equation}
  \label{setG}
  \Big\{ \quad
  \begin{array}{lll}
  g_1 = a^2 - a, &\quad g_2 = ba + ab, &\quad g_3 = b^2 - b, 
  \\
  g_4 = ca + ac - c, &\quad g_5 = cb + bc - c, &\quad g_6 = c^2 - b - a.
  \end{array}
  \end{equation}
The first iteration of the algorithm produces 10 compositions
(including 3 self-compositions);
after putting them in standard form, 
denoted $p \xrightarrow{\mathrm{\; sf \;}} q$, we obtain
  \begin{alignat*}{2}
  g_1 a - a g_1 \; &\xrightarrow{\mathrm{\; sf \;}} \; 0,
  &\quad
  g_2 a - b g_1 \; &\xrightarrow{\mathrm{\; sf \;}} \; s_1 = aba + ba,
  \\
  g_3 a - b g_2 \; &\xrightarrow{\mathrm{\; sf \;}} \; s_2 = bab + ba,
  &\quad
  g_3 b - b g_3 \; &\xrightarrow{\mathrm{\; sf \;}} \; 0,
  \\
  g_4 a - c g_1 \; &\xrightarrow{\mathrm{\; sf \;}} \; s_3 = aca,
  &\quad
  g_5 a - c g_2 \; &\xrightarrow{\mathrm{\; sf \;}} \; s_4 = cab - bca + ca,
  \\
  g_5 b - c g_3 \; &\xrightarrow{\mathrm{\; sf \;}} \; s_5 = bcb,
  &\quad
  g_6 a - c g_4 \; &\xrightarrow{\mathrm{\; sf \;}} \; s_6 = cac - c^2 + ba + a^2,
  \\
  g_6 b - c g_5 \; &\xrightarrow{\mathrm{\; sf \;}} \; s_7 = cbc - c^2 + b^2 + ab,
  &\quad
  g_6 c - c g_6 \; &\xrightarrow{\mathrm{\; sf \;}} \; s_8 = cb + ca - bc - ac.
  \end{alignat*}
Computing normal forms of these compositions with respect to the set $G$
using the algorithm of Figure \ref{normalformmoduloG}, 
we obtain only two distinct nonzero results:
  \begin{align*}
  &
  s_1 - a g_2 + g_1 b - g_2 = -2 ab \; \xrightarrow{\mathrm{\; sf \;}} \; ab,
  \\ 
  &
  s_2 - g_2 b + a g_3 - g_2 = -2 ab \; \xrightarrow{\mathrm{\; sf \;}} \; ab,
  \\ 
  &
  s_3 - a g_4 + g_1 c = 0 \; \xrightarrow{\mathrm{\; sf \;}} \; 0,
  \\ 
  &
  s_4 - g_4 b + b g_4 - g_2 c + a g_5 - g_5 - g_4 = - 2 bc - 2 ac + 2 c \; \xrightarrow{\mathrm{\; sf \;}} \; bc + ac - c,
  \\ 
  &
  s_5 - b g_5 + g_3 c = 0 \; \xrightarrow{\mathrm{\; sf \;}} \; 0,
  \\ 
  &
  s_6 - g_4 c + a g_6 - g_2 = -2 ab \; \xrightarrow{\mathrm{\; sf \;}} \; ab, 
  \\ 
  &
  s_7 - g_5 c + b g_6 + g_2 = 2 ab \; \xrightarrow{\mathrm{\; sf \;}} \; ab,
  \\ 
  &
  s_8 - g_5 - g_4 = - 2bc - 2ac + 2c \; \xrightarrow{\mathrm{\; sf \;}} \; bc + ac - c.
  \end{align*}
So we define
  \begin{equation}
  \label{2new}
  t_1 = ab, \qquad t_2 = bc + ac - c.
  \end{equation}
We include the new generators \eqref{2new} in the original set \eqref{setG}, obtaining
a new set $H$ of generators for the ideal $I$:
  \[
  \left\{ \quad
  \begin{array}{lll}
  g_1 = a^2 - a, &\quad 
  t_1 = ab, &\quad
  g_2 = ba + ab,
  \\
  g_3 = b^2 - b, &\quad
  t_2 = bc + ac - c, &\quad
  g_4 = ca + ac - c,
  \\
  g_5 = cb + bc - c, &\quad 
  g_6 = c^2 - b - a.
  \end{array}
  \right.
  \]
For each element $h \in H$, we compute its normal form with respect to the elements which precede it
in the total order on $H$ (deglex order of leading monomials).
In this simple example, all we need to do is to replace $g_2$ by $g_2 - t_1 = ba$:
  \[
  \left\{ \quad
  \begin{array}{lll}
  g_1 = a^2 - a, &\quad 
  t_1 = ab, &\quad
  g'_2 = ba,
  \\
  g_3 = b^2 - b, &\quad
  t_2 = bc + ac - c, &\quad
  g_4 = ca + ac - c,
  \\
  g_5 = cb + bc - c, &\quad 
  g_6 = c^2 - b - a.
  \end{array}
  \right.
  \]
We now verify that this set is a Gr\"obner basis: 
all compositions of these generators have normal form 0 with respect to this set.
\end{example}

\begin{remark} \label{remarknormalform}
Using the Gr\"obner basis of Example \ref{examplegrobnerbasis}, 
it is easy to compute the normal form of any element of $F\langle X \rangle$ 
using Theorem \ref{theoremgrobner}(b).
In particular, we can compute the normal form of the element $f = c^2 b$ from Example \ref{examplenormalform}:
  \[
  f_1 - g_6 b - g_3 - t_1 = c^2 b - ( c^2 - b - a ) b - ( b^2 - b ) - ab = b.
  \] 
In this way, using Lemma \ref{lemmaquotientstructure}, we can calculate
the structure constants of the universal associative envelope $U(S_2(F))$.
\end{remark}

\begin{exercise}
Let $J = S_2(F)$ be the Jordan algebra of symmetric $2 \times 2$ matrices.
A basis for $U(J) = F\langle a,b,c \rangle / I$ consists of the cosets of the monomials
which do not any leading monomial from the Gr\"obner basis
(Example \ref{examplegrobnerbasis}).
  \begin{enumerate}
  \item[(a)]
  Write down the (finite) set of basis monomials for $U(J)$.
  \item[(b)]
  Using Lemma \ref{lemmaquotientstructure}, 
  compute the structure constants for $U(J)$:
  express products of basis monomials as linear combinations of basis monomials.
  \item[(c)]
  Determine explicitly the structure of the associative algebra $U(J)$.
  (Use the algorithms in my survey paper \cite{Bremner2011} on the Wedderburn decomposition.)
  \end{enumerate}
\end{exercise}

\begin{example} \label{aba-ba}
Here is an example, from de Graaf \cite[page 226]{deGraaf2000},
of a generating set $G$ for which the algorithm of Figure \ref{grobnerbasisalgorithm} never terminates.
This also shows why we must consider self-compositions of generators. 
Let $X = \{ a, b \}$ and define 
  \[
  G_0 = \{ g_1 = aba - ba \}.
  \]
The first iteration of the algorithm produces one composition of $g_1$ with itself:
  \[
  g_1 ba - ab g_1 
  = 
  ( aba - ba ) ba - ab ( aba - ba ) 
  = 
  -baba + ab^2a 
  \xrightarrow{\mathrm{\; sf \;}} 
  baba - ab^2a.
  \]
Computing the normal form of this composition with respect to $G_0$ gives
  \[
  ( baba - ab^2a ) - b ( aba - ba ) = -ab^2a + b^2a
  \xrightarrow{\mathrm{\; sf \;}} 
  ab^2a - b^2a.
  \]
Including this with $g_1$ gives a new generating set,
which is already self-reduced:
  \[
  G_1 = \{ \, g_1 = aba - ba, \, g_2 = ab^2a - b^2a \, \}.
  \]
The second iteration produces three compositions:
  \begin{align*}
  g_1 b^2a - ab g_2
  &= 
  ( aba - ba ) b^2a - ab ( ab^2a - b^2a ) 
  = 
  -bab^2a + ab^3a 
  \\
  &
  \xrightarrow{\mathrm{\; sf \;}} 
  bab^2a - ab^3a,
  \\
  g_2 ba - ab^2 g_1 
  &= 
  ( ab^2a - b^2a ) ba - ab^2 ( aba - ba ) 
  = 
  -b^2aba + ab^3a 
  \\
  &
  \xrightarrow{\mathrm{\; sf \;}} 
  b^2aba - ab^3a,
  \\
  g_2 b^2a - ab^2 g_2
  &= 
  ( ab^2a - b^2a ) b^2a - ab^2 ( ab^2a - b^2a ) 
  = 
  -b^2ab^2a + ab^4a 
  \\
  &
  \xrightarrow{\mathrm{\; sf \;}} 
  b^2ab^2a - ab^4a.
  \end{align*}  
Computing the normal forms of these compositions with respect to $G_1$ gives  
  \begin{align*}
  ( bab^2a - ab^3a ) - b ( ab^2a - b^2a ) = -ab^3a + b^3a \xrightarrow{\mathrm{\; sf \;}} ab^3a - b^3a,
  \\
  ( b^2aba - ab^3a ) - b^2 ( aba - ba ) = -ab^3a + b^3a \xrightarrow{\mathrm{\; sf \;}} ab^3a - b^3a,
  \\
  ( b^2ab^2a - ab^4a ) - b^2 ( ab^2a - b^2a ) = -ab^4a + b^4a \xrightarrow{\mathrm{\; sf \;}} ab^4a - b^4a.  
  \end{align*}
Including these with $g_1, g_2$ gives a new generating set,
which is already self-reduced:
  \[
  G_2 = \{ \, g_1 = aba - ba, \, g_2 = ab^2a - b^2a \, g_3 = ab^3a - b^3a, \, g_4 = ab^4a - b^4a \, \}.
  \]  
It is now easy to verify that the algorithm never terminates; see Exercise \ref{exercise.aba-ba}.
\end{example}

\begin{exercise} \label{exercise.aba-ba}
(a) Work out in detail the next iteration for Example \ref{aba-ba}.

\noindent
(b) State and prove a conjecture for the elements of the set $G_n$ obtained 
at the end of the $n$-th iteration of the Gr\"obner basis algorithm in Example \ref{aba-ba}.
\end{exercise}

\begin{example} \label{CCexample}
Here is another (much more complicated) example in which self-compositions play an essential role.
We set $X = \{ a, b \}$ and consider the following two elements of $F\langle a, b \rangle$
which clearly form a self-reduced set:
  \[
  g_1 = aba - a^2b - a,
  \qquad
  g_2 = bab - ab^2 - b.
  \]
(1)
The first iteration of the Gr\"obner basis algorithm produces three compositions:
  \begin{align*}
  s_1
  =
  g_1 ba - ab g_1 
  &=
  ( aba - a^2b - a ) ba - ab ( aba - a^2b - a )
  \\
  &=
  ababa - a^2b^2a - aba - ababa + aba^2b + aba
  \\
  &=
  aba^2b - a^2b^2a,
  \\
  s_2 
  =
  g_2 a - b g_1
  &=
  ( bab - ab^2 - b ) a - b ( aba - a^2b - a )
  \\
  &=
  baba - ab^2a - ba - baba + ba^2b + ba
  \\
  &=
  ba^2b - ab^2a,
  \\
  s_3
  =
  g_2 ab - ba g_2
  &=
  ( bab - ab^2 - b ) ab - ba ( bab - ab^2 - b )
  \\
  &=
  babab - ab^2ab - bab - babab + ba^2b^2 + bab
  \\
  &=
  ba^2b^2 - ab^2ab. 
  \end{align*} 
We compute the normal form of each composition with respect to $\{ g_1, g_2 \}$:
  \begin{align*}
  s_1 - g_1 ab - a^2 g_2
  &=
  aba^2b - a^2b^2a - ( aba - a^2b - a ) ab - a^2 ( bab - ab^2 - b )
  \\
  &=
  aba^2b - a^2b^2a - aba^2b + a^2bab + a^2b - a^2bab + a^3b^2 + a^2b
  \\
  &=
  - a^2b^2a + a^3b^2 + 2 a^2b
  \xrightarrow{\mathrm{\; sf \;}}
  a^2b^2a - a^3b^2 - 2 a^2b
  =
  h_1,
  \\
  s_2 
  &=
  h_2,
  \\
  s_3 + ab g_2 + g_1 b^2
  &=
  ba^2b^2 - ab^2ab + ab ( bab - ab^2 - b ) + ( aba - a^2b - a ) b^2
  \\
  &=
  ba^2b^2 - ab^2ab + ab^2ab - abab^2 - ab^2 + abab^2 - a^2b^3 - ab^2
  \\
  &=
  ba^2b^2 - a^2b^3 - 2ab^2
  =
  h_3.
  \end{align*}
We combine these compositions with the original generators and sort them:
  \begin{align*}
  &
  g_1 = aba - a^2b - a,
  \qquad
  g_2 = bab - ab^2 - b,
  \qquad
  h_2 = ba^2b - ab^2a,
  \\
  &
  h_1 = a^2b^2a - a^3b^2 - 2 a^2b,
  \qquad
  h_3 = ba^2b^2 - a^2b^3 - 2 ab^2.
  \end{align*}
Self-reducing this set eliminates $h_3$ since $h_3 - h_2 b - ab g_2 - g_1 b^2 = 0$.

\noindent
(2)
The second iteration produces five compositions with these normal forms:
  \begin{align*}
  &
  h_4 = ba^3b - ab^2a^2 + ba^2,
  \quad 
  h_5 = ba^3b^2 - a^2b^3a,
  \quad 
  h_6 = a^3b^3a - a^4b^3 -3 a^3b^2,
  \\
  & 
  h_7 = ba^4b^2 - ab^2a^3b +2 ba^3b,
  \quad 
  h_8 = a^4b^4a - a^5b^4 +2 a^3b^3a -6 a^4b^3 -6 a^3b^2.
  \end{align*} 
Combining these compositions with $g_1, g_2, h_2, h_1$ and self-reducing the resulting set
eliminates $h_5$ and replaces $h_7$ and $h_8$ with these elements:
  \[
  h'_7 
  =
  ba^4b^2 - a^2b^3a^2 +2 ab^2a^2 -2 ba^2,
  \qquad
  h'_8
  = 
  a^4b^4a - a^5b^4 -4 a^4b^3.
  \] 
(3)
The third iteration of the algorithm produces 18 compositions:
  \begin{align*}
  &
  ba^4b - ab^2a^3 +2 ba^3,
  \\
  &
  ba^5b^2 - a^2b^3a^3 +2 ba^4b +2 ab^2a^3 -2 ba^3,
  \\
  &
  ba^5b^2 - ab^2a^4b +3 ba^4b,
  \\
  &
  ba^6b^2 - ab^2a^5b +4 ba^5b,
  \\
  &
  ba^5b^3 - a^3b^4a^2 +3 a^2b^3a^2 -6 ab^2a^2 +6 ba^2,
  \\
  &
  ba^6b^3 - a^2b^3a^4b +4 ba^5b^2 +2 ab^2a^4b,
  \\
  & 
  ba^6b^3 - ab^2a^5b^2 +4 ba^5b^2,
  \\
  &
  ba^5b^4 - a^4b^5a,
  \\
  &
  a^5b^5a - a^6b^5 -5 a^5b^4,
  \\
  &
  ba^7b^3 - a^2b^3a^5b +6 ba^6b^2 +2 ab^2a^5b +4 ba^5b,
  \\
  &
  ba^7b^3 - ab^2a^6b^2 +5 ba^6b^2,
  \\
  &
  a^5b^5a^2 - a^6b^5a -5 a^6b^4 -20 a^5b^3,
  \\
  &
  a^6b^6a - a^7b^6 +6 a^5b^5a -12 a^6b^5 -30 a^5b^4,
  \\
  &
  a^6b^6a - a^7b^6 +8 a^5b^5a -14 a^6b^5 -40 a^5b^4,
  \\
  &
  ba^8b^4 - a^2b^3a^6b^2 +8 ba^7b^3 +2 ab^2a^6b^2 +10 ba^6b^2,
  \\
  &  
  a^6b^6a^2 - a^7b^6a +4 a^5b^5a^2 -10 a^6b^5a -20 a^6b^4 -80 a^5b^3,
  \\
  &
  a^7b^7a - a^8b^7 +12 a^6b^6a -19 a^7b^6 +36 a^5b^5a -108 a^6b^5 -180 a^5b^4,
  \\
  &
  a^8b^8a - a^9b^8 +12 a^7b^7a -20 a^8b^7 +36 a^6b^6a -120 a^7b^6 +24 a^5b^5a 
  \\
  &\qquad
  -240 a^6b^5 -120 a^5b^4.
  \end{align*} 
At this point it seems clear that the algorithm will never terminate!
\end{example}

\begin{exercise} \label{CCexercise}
Referring to Example \ref{CCexample}:
  \begin{enumerate}
  \item[(a)]
Verify the statements about the second and third iterations.
  \item[(b)] 
Prove that the algorithm does not terminate.
  \item[(c)]
Determine a closed form for the generators at the end of the $n$-th iteration.
  \end{enumerate}
\end{exercise}

\begin{remark}
A rich source of examples of the behavior of the Gr\"obner basis algorithm
comes from the construction of universal associative envelopes for nonassociative triple systems 
obtained from the trilinear operations classified by the author and Peresi \cite{BremnerPeresi2007}.
A detailed study of the simplest non-trivial examples of this construction 
appears in the Ph.D. thesis of Elgendy \cite{ElgendyThesis};
see also her forthcoming paper \cite{Elgendy2013}.
Similar examples are discussed in \S\ref{trilinearexamples} of these lecture notes.
\end{remark}


\section{Application: The PBW Theorem}
\label{sectionPBW}

We now present the beautiful combinatorial proof of the Poincar\'e-Birkhoff-Witt (PBW) Theorem
discovered by Bokut \cite{Bokut1976} and independently by Bergman \cite{Bergman1978}.
We follow the exposition given by de Graaf \cite[Theorem 6.2.1]{deGraaf2000}.
The assumption that the Lie algebra is finite dimensional is not essential.

\begin{theorem} \label{pbwtheorem} \emph{\textbf{PBW Theorem.}}
If $L$ is a finite dimensional Lie algebra over a field $F$ with ordered basis $X = \{ x_1, \dots, x_n \}$,
then a basis of its universal associative envelope $U(L)$ consists of the monomials
  \[
  x_1^{e_1} \cdots x_n^{e_n}
  \quad
  ( e_1, \dots, e_n \ge 0 ).
  \]
It follows immediately that: 
  \begin{enumerate}
  \item[(i)]
$U(L)$ is infinite dimensional.
  \item[(ii)]
The natural map $L \to U(L)$ is injective.
  \item[(iii)]
$L$ is isomorphic to a subalgebra of $U(L)^-$.
  \end{enumerate}
\end{theorem}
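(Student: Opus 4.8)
The plan is to apply the Gr\"obner basis machinery developed throughout the paper to the specific ideal $I$ defining $U(L) = F\langle X \rangle / I$. Recall that $I$ is generated by the set
\[
G_0 = \Big\{ \, x_i x_j - x_j x_i - \sum_{k=1}^n c_{ij}^k x_k \;\Big|\; 1 \le j < i \le n \, \Big\}.
\]
The leading monomial of each generator is $x_i x_j$ with $i > j$ (since $x_i x_j \succ x_j x_i$ in deglex order because $x_j \prec x_i$, and both dominate the linear tail). The whole argument rests on proving that $G_0$ is already a Gr\"obner basis for $I$, after which the theorem's three corollaries follow almost formally.

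First I would verify that $G_0$ is monic and self-reduced: each generator is in standard form with leading coefficient $1$, and no leading monomial $x_i x_j$ is a subword of another $x_k x_\ell$, so self-reducedness holds. Then, by the Main Theorem (Theorem \ref{maintheorem}), it suffices to check that every composition of generators in $G_0$ reduces to zero. The relevant overlaps come from triples: the leading monomial $x_i x_j$ ($i>j$) overlaps $x_j x_k$ ($j>k$) on the common middle letter $x_j$, producing the composition associated with the word $x_i x_j x_k$ for indices $i > j > k$. Computing $N\!F$ of this composition and showing it is zero is precisely the Jacobi identity in disguise: the two ways of reordering $x_i x_j x_k$ into the descending-then-linear normal form must agree, and the obstruction to agreement is exactly the Jacobi associator $\sum$ of the structure constants, which vanishes because $L$ is a Lie algebra. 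This is the main obstacle — the heart of the proof — and it is where finite dimensionality and the defining identities of $L$ actually get used; the computation, while bookkeeping-heavy, produces matching tails by antisymmetry ($c_{ij}^k = -c_{ji}^k$) together with the Jacobi relation on the $c_{ij}^k$.

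Once $G_0$ is known to be a Gr\"obner basis, Theorem \ref{theoremgrobner}(a) identifies the normal words $N(I)$ as exactly those monomials containing no subword $x_i x_j$ with $i > j$; these are precisely the non-decreasing words $x_{i_1} x_{i_2} \cdots x_{i_k}$ with $i_1 \le \cdots \le i_k$, which collect into the claimed PBW monomials $x_1^{e_1} \cdots x_n^{e_n}$. Since $N(I)$ is a basis for the complement $C(I) \approx U(L)$ by Proposition \ref{propositionIC(I)} and Lemma \ref{lemmaquotientstructure}, this establishes the basis statement of the theorem. The three consequences are then immediate: (i) there are infinitely many such monomials (e.g.\ all powers $x_1^{e_1}$), so $U(L)$ is infinite dimensional; (ii) the degree-$1$ monomials $x_1, \dots, x_n$ are distinct normal words, hence linearly independent in $U(L)$, so the natural map $L \to U(L)$ sending $x_i$ to its coset is injective; and (iii) injectivity of this Lie algebra morphism means $L$ is isomorphic to its image, a subalgebra of $U(L)^-$.

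The only subtlety beyond the composition computation is confirming that the deglex leading-monomial analysis is consistent with the inhomogeneous (linear) tails in the generators — the tails always have strictly smaller degree than the quadratic leading terms, so they never interfere with the subword/overlap structure, and every reduction strictly decreases the leading monomial. With that observation in hand, the reduction-to-zero of the single family of order-$3$ overlap compositions is the complete content of the argument, and I would present that calculation carefully while treating the passage to the corollaries as routine.
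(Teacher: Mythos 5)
Your proposal is correct and follows essentially the same route as the paper: show the defining generators $g_{ij}$ already form a Gr\"obner basis by reducing the single family of compositions arising from the overlaps $x_i x_j x_k$ ($i>j>k$) to zero via the Jacobi identity, then read off the nondecreasing monomials as the normal words. The only quibble is your remark that finite dimensionality ``actually gets used'' in the composition computation --- it does not (the paper notes explicitly that this hypothesis is inessential); only anticommutativity and the Jacobi identity are needed.
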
 
  
\begin{proof}
The structure constants of $L$ have the form
  \[
  [ x_i, x_j ] = \sum_{k=1}^n c_{ij}^k x_k \quad (c_{ij}^k \in F),
  \]
where $c_{ji}^k = - c_{ij}^k$ and $c_{ii}^k = 0$.
The universal associative envelope $U(L)$ is the quotient of the free associative algebra
$F\langle X \rangle$ by the ideal $I$ generated by the elements
  \[
  g_{ij}
  =
  x_i x_j - x_j x_i - [ x_i, x_j ] 
  =
  x_i x_j - x_j x_i - \sum_{k=1}^n c_{ij}^k x_k. 
  \]
By anticommutativity of the Lie bracket, we may assume that $i > j$, and hence 
$x_i x_j$ is the leading monomial of $g_{ij}$.
(If $i = j$ then $g_{ii} = 0$.)
So we set 
  \[
  G = \{ \, g_{ij} \mid 1 \le j < i \le n \, \}.
  \]
We will show that $G$ is a Gr\"obner basis for the ideal $I$.

Consider the leading monomials of two distinct generators,
  \[
  LM(g_{ij}) = x_i x_j \; (i > j), 
  \qquad
  LM( g_{\ell k} ) = x_\ell x_k \; (\ell > k).
  \]
The only possible compositions of these generators occur when either $j = \ell$ or $k = i$.
It suffices to assume $j = \ell$, so we consider $g_{ij}$ and $g_{jk}$ where $i > j > k$.
We have 
  \[
  LM( g_{ij} ) \, x_k = x_i x_j x_k = x_i \, LM( g_{jk} ),
  \]
which produces the composition
  \begin{align*}
  g_{ij} x_k - x_i g_{jk}
  &=
  \big( x_i x_j - x_j x_i - [ x_i, x_j ] \big) x_k
  -
  x_i \big( x_j x_k - x_k x_j - [ x_j, x_k ] \big)
  \\
  &=
  x_i x_j x_k - x_j x_i x_k - [ x_i, x_j ] x_k
  -
  x_i x_j x_k + x_i x_k x_j + x_i [ x_j, x_k ]
  \\
  &=
  - x_j x_i x_k - [ x_i, x_j ] x_k + x_i x_k x_j + x_i [ x_j, x_k ]
  \\
  &=
  x_i x_k x_j - x_j x_i x_k - [ x_i, x_j ] x_k + x_i [ x_j, x_k ],
  \end{align*}
which is in standard form.
(It is convenient to avoid explicit structure constants in this calculation;
recall that $[x_i,x_j]$ is a homogeneous polynomial of degree 1.)
To compute the normal form with respect to $G$, 
we first subtract $g_{ik} x_j$ and add $x_j g_{ik}$:
  \begin{align*}
  &
  x_i x_k x_j - x_j x_i x_k - [ x_i, x_j ] x_k + x_i [ x_j, x_k ]
  \\
  &\quad
  - \big( x_i x_k - x_k x_i - [ x_i, x_k ] \big) x_j
  + x_j \big( x_i x_k - x_k x_i - [ x_i, x_k ] \big)
  \\
  &=
  x_i x_k x_j - x_j x_i x_k - [ x_i, x_j ] x_k + x_i [ x_j, x_k ]
  \\
  &\quad
  - x_i x_k x_j + x_k x_i x_j + [ x_i, x_k ] x_j
  + x_j x_i x_k - x_j x_k x_i - x_j [ x_i, x_k ]
  \\
  &=
  - [ x_i, x_j ] x_k + x_i [ x_j, x_k ]
  + x_k x_i x_j + [ x_i, x_k ] x_j
  - x_j x_k x_i - x_j [ x_i, x_k ]
  \\
  &=
  - x_j x_k x_i + x_k x_i x_j  
  - [ x_i, x_j ] x_k + x_i [ x_j, x_k ] + [ x_i, x_k ] x_j - x_j [ x_i, x_k ].
  \end{align*}
We next add $g_{jk} x_i$ and subtract $x_k g_{ij}$:
  \begin{align*}
  &
  - x_j x_k x_i + x_k x_i x_j  
  - [ x_i, x_j ] x_k + x_i [ x_j, x_k ] + [ x_i, x_k ] x_j - x_j [ x_i, x_k ]
  \\
  &\quad
  + \big( x_j x_k - x_k x_j - [ x_j, x_k ] \big) x_i
  - x_k \big( x_i x_j - x_j x_i - [ x_i, x_j ] \big)
  \\
  &=
  - x_j x_k x_i + x_k x_i x_j  
  - [ x_i, x_j ] x_k + x_i [ x_j, x_k ] + [ x_i, x_k ] x_j - x_j [ x_i, x_k ]
  \\
  &\quad
  + x_j x_k x_i - x_k x_j x_i - [ x_j, x_k ] x_i
  - x_k x_i x_j + x_k x_j x_i + x_k [ x_i, x_j ]
  \\
  &= 
  - [ x_i, x_j ] x_k + x_i [ x_j, x_k ] + [ x_i, x_k ] x_j - x_j [ x_i, x_k ] - [ x_j, x_k ] x_i + x_k [ x_i, x_j ]
  \\
  &=
  x_i [ x_j, x_k ] - [ x_j, x_k ] x_i
  + x_j [ x_k, x_i ] - [ x_k, x_i ] x_j 
  + x_k [ x_i, x_j ] - [ x_i, x_j ] x_k.
  \end{align*}
We now observe that this last expression is equal to 
  \[
  [ x_i, [ x_j, x_k ] ] + [ x_j, [ x_k, x_i ] ] + [ x_k, [ x_i, x_j ] ],
  \]
which is zero by the Jacobi identity.
Thus every composition of the generators has normal form zero, and so $G$ is a Gr\"obner basis.

The leading monomials of the elements of this Gr\"obner basis have the form $x_i x_j$ where $i > j$.
A basis for $U(L)$ consists of all monomials $w$ which do not have any of these leading monomials as a 
subword. That is, if $w$ contains a subword $x_i x_j$ then $i \le j$.
It follows that the monomials in the statement of this Theorem form a basis for $U(L)$.
In particular, the monomials $x_1, \dots, x_n$ of degree 1 are linearly independent in $U(L)$,
and hence the natural map from $L$ to $U(L)$ is injective.
\end{proof}

\begin{corollary} \label{corollarylie}
Every polynomial identity satisfied by the Lie bracket in every associative algebra
is a consequence of anticommutativity and the Jacobi identity.
\end{corollary}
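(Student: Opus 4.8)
The plan is to recast the statement as the injectivity of the canonical map from the free Lie algebra into the free associative algebra, and then read off that injectivity from the PBW Theorem (Theorem \ref{pbwtheorem}). First I would make precise what the assertion means. A polynomial identity for the Lie bracket is an element $p$ of the free nonassociative algebra on an alphabet $X$ equipped with a single anticommutative bilinear operation; it is \emph{satisfied in every associative algebra} if it evaluates to $0$ whenever the bracket is interpreted as the commutator $[x,y]=xy-yx$. By the universal property of $F\langle X\rangle$, testing against all associative algebras under all substitutions is equivalent to testing the single generic substitution $x_i\mapsto x_i$ in $F\langle X\rangle$ itself; thus $p$ is satisfied in every associative algebra if and only if $\phi(p)=0$, where $\phi$ sends each bracket to the corresponding commutator in $F\langle X\rangle$. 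On the other hand, ``$p$ is a consequence of anticommutativity and the Jacobi identity'' means exactly that $p$ maps to $0$ under the projection $\pi$ onto the free Lie algebra $L=L(X)$, since $L(X)$ is by definition the quotient of the free nonassociative algebra by the ideal generated by all substitution instances of these two identities.

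Since anticommutativity and Jacobi do hold for the commutator, the map $\phi$ factors as $\phi=\iota\circ\pi$ through a Lie algebra homomorphism $\iota\colon L(X)\to F\langle X\rangle^-$ determined by $x\mapsto x$. Consequently the corollary is equivalent to the implication $\iota(\pi(p))=0\Rightarrow\pi(p)=0$, that is, to the injectivity of $\iota$. To identify $\iota$ with the PBW map, I would observe that $F\langle X\rangle$ satisfies the universal property defining $U(L(X))$: because $L(X)$ is free on $X$, a Lie homomorphism $L(X)\to A^-$ is the same datum as a set map $X\to A$, which is the same datum as a unital associative homomorphism $F\langle X\rangle\to A$. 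By uniqueness of the universal envelope this gives $U(L(X))\cong F\langle X\rangle$, under which the structural map $\alpha\colon L(X)\to U(L(X))^-$ corresponds to $\iota$. Then part (ii) of Theorem \ref{pbwtheorem} states precisely that $\alpha$, hence $\iota$, is injective, completing the argument.

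The one point needing care, and the main obstacle, is that Theorem \ref{pbwtheorem} is stated for finite dimensional $L$, whereas $L(X)$ is infinite dimensional. I would resolve this exactly as flagged in the remark following that theorem: the Gr\"obner basis machinery of the preceding sections is developed for a countable alphabet, and the PBW proof uses only that $L$ possesses an ordered (possibly countably infinite) basis together with the Jacobi identity among its basis elements. Choosing any ordered basis $\{y_1,y_2,\dots\}$ of $L(X)$, the relations $g_{ij}=y_iy_j-y_jy_i-[y_i,y_j]$ again form a Gr\"obner basis, since every composition reduces to zero by the identical Jacobi computation, so the degree-one elements $y_i$ remain linearly independent in $U(L(X))$ and $\alpha$ is injective. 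Only the \emph{existence} of an ordered basis of $L(X)$ is used, not an explicit description by Hall or Lyndon words, so the argument is not circular.
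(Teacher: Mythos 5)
Your proposal is correct and follows essentially the same route as the paper: both reduce the corollary to the injectivity of the canonical map from the free Lie algebra on the variables into its universal associative envelope, which is exactly part (ii) of Theorem \ref{pbwtheorem}. The paper phrases this as a contradiction (a nonzero $p$ in the free Lie algebra would have to be killed by the injective map $\alpha\colon L \to U(L)^-$), whereas you add the harmless identification $U(L(X)) \cong F\langle X\rangle$ and, commendably, make explicit the point --- left implicit in the paper beyond the remark that finite-dimensionality ``is not essential'' --- that PBW must be applied to the infinite-dimensional free Lie algebra.
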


\begin{proof}
Suppose that $p( a_1, \dots, a_n ) \equiv 0$ is a polynomial identity which is not 
a consequence of anticommutativity and the Jacobi identity.
Then the Lie polynomial $p( a_1, \dots, a_n )$ is a nonzero element of the free Lie algebra $L$ 
generated by the variables $\{ a_1, \dots, a_n \}$.
Let $A$ be any associative algebra, and let $\epsilon\colon L \to A^-$ be any morphism of Lie algebras.
By definition of polynomial identity, we have $\epsilon(p) = 0$.
Take $A = U(L)$ and let $\epsilon$ be the injective map $L \to U(L)^-$ obtained from the PBW theorem.
Since $p \ne 0$ we have $\epsilon(p) \ne 0$, giving a contradiction.
\end{proof}

\begin{remark}
Lie algebras arose originally as tangent algebras of Lie groups.
Weakening the requirement of associativity in the definition of Lie groups gives rise to
various classes of nonassociative smooth loops, such as Moufang loops, Bol loops,
and monoassociative loops.
The corresponding tangent algebras are known respectively as Malcev algebras, Bol algebras, and BTQ algebras.
Universal nonassociative envelopes for Malcev and Bol algebras have been constructed
by P\'erez-Izquierdo and Shestakov \cite{Perez2005,PerezShestakov2004}.
This problem is still open for BTQ algebras,
but see my recent paper with Madariaga \cite{BremnerMadariaga2013}.
All of these tangent algebras are special cases of Akivis and Sabinin algebras;
for the universal nonassociative envelopes of these structures, 
see Shestakov and Umirbaev \cite{ShestakovUmirbaev2002} and P\'erez-Izquierdo \cite{Perez2007}.
\end{remark}

The PBW Theorem shows that for every Lie algebra $L$, the original set of generators
obtained from the structure constants is already a Gr\"obner basis.
The original generators in $I$ can be interpreted as rewriting rules in $U(L)$ as follows:
  \[
  x_i x_j - x_j x_i - \sum_{k=1}^n c_{ij}^k x_k \in I
  \quad \iff \quad
  x_i x_j = x_j x_i + \sum_{k=1}^n c_{ij}^k x_k \in U(L).
  \]
Repeated application of these rewriting rules allows us to work out explicit multiplication formulas
for monomials in $U(L)$.

\begin{exercise}
Let $L$ be the 2-dimensional solvable Lie algebra with basis $\{ a, b \}$ where $[a,b] = b$;
the other structure constants follow from anticommutativity.
The basis of $U(L)$ obtained from the PBW theorem consists of the monomials $a^i b^j$ for $i, j \ge 0$.
The ideal $I$ is generated by $ab - ba - b$, and so in $U(L)$ we have the relation $ba = ab - b$.
Use this and induction on the exponents to work out a formula for the product $( a^i b^j ) ( a^k b^\ell )$
as a linear combination of basis monomials.
\end{exercise}

\begin{exercise}
Let $L$ be the 3-dimensional nilpotent Lie algebra with basis $\{ a, b, c \}$ where 
$[a,b] = c$, $[a,c] = [b,c] = 0$.
The PBW basis of $U(L)$ consists of the monomials $a^i b^j c^k$ for $i, j, k \ge 0$.
In $U(L)$ we have $ba = ab - c$, $ac = ca$, $bc = cb$.
State and prove a formula for $( a^i b^j c^k ) ( a^\ell b^m c^n )$
as a linear combination of basis monomials.
\end{exercise}

\begin{exercise}
Let $L$ be the 3-dimensional simple Lie algebra $\mathfrak{sl}_2(F)$ with basis $\{ e, f, h \}$ 
where $[h,e] = 2e$, $[h,f] = -2f$, $[e,f] = h$.
The PBW basis of $U(L)$ consists of the monomials $f^i h^j e^k$ for $i, j, k \ge 0$.
In $U(L)$ we have 
  \[
  eh = he - 2e, 
  \qquad
  hf = fh - 2f, 
  \qquad 
  ef = fe + h.
  \]
State and prove a formula for $( f^i h^j e^k ) ( f^\ell h^m e^n )$
as a linear combination of basis monomials.
(This exercise is harder than the previous two.
Note that $\{h,e\}$ and $\{h,f\}$ span 2-dimensional solvable subalgebras.
See also Example \ref{Usl2example}.)
\end{exercise}


\section{Jordan Structures on $2 \times 2$ Matrices} \label{sectionFD}

In this section we study some examples of nonassociative structures whose universal associative envelopes
are finite dimensional.
The underlying vector space in all three examples is $M_2(F)$, 
the $2 \times 2$ matrices over a field $F$ of characteristic $\ne 2$.
We will use the following notation for the basis of matrix units:
  \[
  a = E_{11} = \begin{bmatrix} 1 & 0 \\ 0 & 0 \end{bmatrix},
  \;\;
  b = E_{12} = \begin{bmatrix} 0 & 1 \\ 0 & 0 \end{bmatrix},
  \;\;
  c = E_{21} = \begin{bmatrix} 0 & 0 \\ 1 & 0 \end{bmatrix},
  \;\;
  d = E_{22} = \begin{bmatrix} 0 & 0 \\ 0 & 1 \end{bmatrix}.
  \]

\subsection{The Jordan algebra of $2 \times 2$ matrices} \label{symmetric2x2}

We first make $M_2(F)$ into a Jordan algebra $J$ using the Jordan product $x \circ y = xy + yx$.
(For convenience we omit the scalar $\tfrac12$.)
The universal associative envelope $U(J)$ is isomorphic to $F\langle a,b,c,d \rangle / I$
where the ideal $I$ is generated by the following set of 10 elements, 
obtained from the structure constants of $J$; this set is already self-reduced:
  \begin{align*}
  &
  g_1 = a^2 - a, \qquad 
  g_2 = ba + ab - b, \qquad 
  g_3 = b^2, \qquad
  g_4 = ca + ac - c,
  \\
  &
  g_5 = cb + bc - d - a, \qquad
  g_6 = c^2, \qquad
  g_7 = da + ad, \qquad 
  g_8 = db + bd - b,
  \\
  & 
  g_9 = dc + cd - c, \qquad 
  g_{10} = d^2 - d.
  \end{align*}
We obtain three distinct nonzero compositions from the pairs $(g_5,g_2)$, $(g_5,g_3)$, $(g_6,g_5)$;
computing their normal forms with respect to the set of generators gives:
  \[
  s_1 = ad, \qquad 
  s_2 = bd - ab, \qquad 
  s_3 = cd - ac.
  \] 
Combining these three compositions with the original ten generators gives a new set of 13 generators;
self-reduction makes only minor changes 
  \begin{align*}
  &
  a^2 - a, \quad 
  ad, \quad 
  ba + ab - b, \quad 
  b^2, \quad 
  bd - ab, \quad 
  ca + ac - c, \quad
  cb + bc - d - a,
  \\
  &
  c^2, \quad 
  cd - ac, \quad 
  da, \quad 
  db + ab - b, \quad 
  dc + ac - c, \quad 
  d^2 - d.
  \end{align*} 
Every composition of these 13 generators has normal form zero, and so this set is a Gr\"obner basis.
There are only 9 monomials in $F\langle a,b,c,d \rangle$ which do not have the leading monomial of
one of the Gr\"obner basis elements as a subword:
  \[
  u_1 = 1, \;\;
  u_2 = a, \;\;
  u_3 = b, \;\;
  u_4 = c, \;\;
  u_5 = d, \;\;
  u_6 = ab, \;\;
  u_7 = ac, \;\;
  u_8 = bc, \;\;
  u_9 = abc.
  \]
The cosets of these monomials modulo $I$ form a basis for $U(J)$.
The multiplication table of $U(J)$ is displayed in Table \ref{UJ2table},
where $u_i$ is denoted by $i$ and dot indicates 0.

  \begin{table}
  \[
  \begin{array}{c|ccccccccc}
  &
  1 &  
  2 &  
  3 &  
  4 &  
  5 &  
  6 &  
  7 &  
  8 &  
  9  
  \\
  \midrule
  1 &
  1 &  
  2 &  
  3 &  
  4 &  
  5 &  
  6 &  
  7 &  
  8 &  
  9  
  \\
  2 &
  2 &  
  2 &  
  6 &  
  7 &  
  \cdot & 
  6 &  
  7 &  
  9 &  
  9  
  \\
  3 &
  3 &  
  3 {-} 6 &  
  \cdot & 
  8 &  
  6 &  
  \cdot & 
  8 {-} 9 &  
  \cdot & 
  \cdot 
  \\
  4 &  
  4 &  
  4 {-} 7 &  
  2 {+} 5 {-} 8 &  
  \cdot & 
  7 &  
  5 {-} 8 {+} 9 &  
  \cdot & 
  4 &  
  4 {-} 7  
  \\
  5 &
  5 &  
  \cdot & 
  3 {-} 6 &  
  4 {-} 7 &  
  5 &  
  \cdot & 
  \cdot & 
  8 {-} 9 &  
  \cdot 
  \\
  6 &
  6 &  
  \cdot & 
  \cdot & 
  9 &  
  6 &  
  \cdot & 
  \cdot & 
  \cdot & 
  \cdot 
  \\
  7 &
  7 &  
  \cdot & 
  2 {-} 9 &  
  \cdot & 
  7 &  
  \cdot & 
  \cdot & 
  7 &  
  \cdot 
  \\
  8 &
  8 &  
  9 &  
  3 &  
  \cdot & 
  8 {-} 9 &  
  6 &  
  \cdot & 
  8 &  
  9  
  \\
  9 &
  9 &  
  9 &  
  6 &  
  \cdot & 
  \cdot & 
  6 &  
  \cdot & 
  9 &  
  9 
  \\
  \midrule
  \end{array}
  \] 
  \caption{Structure constants for $U(J)$ where $J = M_2(F)^+$}
  \label{UJ2table}
  \end{table}

\begin{exercise}
(a) 
Verify the multiplication table for $U(J)$ by computing the normal form of each product 
of basis elements with respect to the Gr\"obner basis.

(b)
Use the algorithms in my survey paper \cite{Bremner2011} to compute the structure of $U(J)$.
Prove or disprove that $U(J) \approx F \oplus M_2(F) \oplus M_2(F)$.
Compare your results with the known representation theory of Jordan algebras; see Jacobson \cite{Jacobson}.
\end{exercise}

\subsection{The $2 \times 2$ matrices as a Jordan triple system}

This subsection and the next introduce the topic of multilinear operations, 
which will be discussed systematically in Section \ref{sectionmultilinear}.
We consider the vector space $T = M_2(F)$ with a trilinear operation, 
the Jordan triple product $\langle x, y, z \rangle = xyz + zyx$.
Working out the structure constants for this operation, we find that the ideal $I$ 
appearing in the definition of $U(T)$
is generated by the following self-reduced set of 40 elements:
  \begin{align*}
  &
  a^3 - a, \;\; 
  aba, \;\; 
  aca, \;\; 
  ada, \;\; 
  ba^2 + a^2b - b, \;\; 
  bab, \;\; 
  b^2a + ab^2, \;\; 
  b^3, \;\;
  bca + acb - a,
  \\
  &
  bcb - b, \;\; 
  bda + adb, \;\; 
  bdb, \;\; 
  ca^2 + a^2c - c, \;\; 
  cab + bac - d, \;\;
  cac, \;\; 
  cba + abc - a,
  \\
  &
  cb^2 + b^2c, \;\; 
  cbc - c, \;\; 
  c^2a + ac^2, \;\; 
  c^2b + bc^2, \;\; 
  c^3, \;\;
  cda + adc, \;\; 
  cdb + bdc - a, \;\; 
  cdc,
  \\
  & 
  da^2 + a^2d, \;\; 
  dab + bad, \;\; 
  dac + cad, \;\; 
  dad, \;\; 
  dba + abd - b, \;\; 
  db^2 + b^2d, 
  \\
  &
  dbc + cbd - d, \;\; 
  dbd, \;\; 
  dca + acd - c, \;\; 
  dcb + bcd - d, \;\; 
  dc^2 + c^2d, \;\; 
  dcd, \;\; 
  d^2a + ad^2,
  \\
  & 
  d^2b + bd^2 - b, \;\; 
  d^2c + cd^2 - c, \;\; 
  d^3 - d. 
  \end{align*}
These elements produce 36 distinct nonzero compositions:
  \begin{align*}
  &
  ad, \;\; 
  b^2, \;\; 
  bd - ab, \;\; 
  c^2, \;\; 
  cd - ac, \;\; 
  da, \;\; 
  db - ba, \;\; 
  dc - ca, \;\; 
  d^2 - cb - bc + a^2, \;\; 
  a^2d,
  \\
  & 
  ab^2, \;\; 
  abd - a^2b, \;\; 
  acb + abc - a, \;\; 
  ac^2, \;\; 
  acd - a^2c, \;\; 
  adb, \;\; 
  adc, \;\; 
  ad^2, \;\; 
  bad, \;\; 
  b^2c,
  \\
  & 
  b^2d, \;\; 
  bc^2, \;\; 
  bcd - bac, \;\; 
  bdc + acb - a, \;\; 
  bdc - abc, \;\; 
  bd^2 - b^2c - a^2b, \;\; 
  bd^2 - abd,
  \\
  & 
  bd^2 - a^2b, \;\; 
  cad, \;\; 
  cbd + bcd - a^2d - d, \;\; 
  cbd + bcd - d, \;\; 
  cbd + bac - d, \;\; 
  c^2d,
  \\
  & 
  cd^2 - acd, \;\; 
  cd^2 + bc^2 - a^2c, \;\; 
  cd^2 - a^2c.
  \end{align*} 
Taking the union of these two sets gives 76 generators, and self-reducing this set produces
a set with only 22 elements:
  \begin{align*}
  &
  ad, \;\; 
  b^2, \;\; 
  bd - ab, \;\; 
  c^2, \;\; 
  cd - ac, \;\; 
  da, \;\; 
  db - ba, \;\; 
  dc - ca, \;\; 
  d^2 - cb - bc + a^2,
  \\
  &
  a^3 - a, \;\; 
  aba, \;\; 
  aca, \;\; 
  acb + abc - a, \;\; 
  ba^2 + a^2b - b, \;\; 
  bab, \;\; 
  bca - abc, \;\; 
  bcb - b,
  \\
  & 
  ca^2 + a^2c - c, \;\; 
  cab + bac - d, \;\; 
  cac, \;\; 
  cba + abc - a, \;\; 
  cbc - c. 
  \end{align*}
All compositions of this new set have normal form zero, 
so we have a Gr\"obner basis.
There are only 17 monomials in $F\langle a,b,c,d \rangle$ which do not have the leading monomial
of one of these 22 generators as a subword, and the cosets of these monomials modulo $I$ form
a basis for the universal associative envelope $U(T)$:
  \[
  1, \;\;
  a, \;\;
  b, \;\;
  c, \;\;
  d, \;\;
  a^2, \;\;
  ab, \;\;
  ac, \;\;
  ba, \;\;
  bc, \;\;
  ca, \;\;
  cb, \;\;
  a^2b, \;\;
  a^2c, \;\;
  abc, \;\;
  bac, \;\;
  a^2bc.
  \] 
The multiplication table for $U(T)$ is an array of size $17 \times 17$.

\begin{exercise}
Use a computer algebra system to calculate the multiplication table for $U(T)$.
Compute the Wedderburn decomposition of $U(T)$. 
Prove or disprove that 
$U(T) \approx F \oplus M_2(F) \oplus M_2(F) \oplus M_2(F) \oplus M_2(F)$.
For the structure theory of Jordan triple systems, 
see Loos \cite{Loos1971} and Meyberg \cite{Meyberg1972}.
\end{exercise}

\subsection{The $2 \times 2$ matrices with the Jordan tetrad} \label{subsectiontetrad}

We consider the vector space $Q = M_2(F)$ with a quadrilinear operation, 
the Jordan tetrad 
  \[
  \{ w, x, y, z \} = wxyz + zyxw.
  \]
Working out the structure constants for this operation, we find that the ideal $I$ 
is generated by the self-reduced set of 136 elements displayed in Table \ref{JQS136}.
Remarkably, there are 2769 distinct nontrivial compositions of these 136 generators.
The most complicated normal form of these compositions is
  \[
  bcbcdcd + bcbc^2d^2 + dcd^2 + c^2bd + cbdc + cbcd + bdc^2 - adcd - c.
  \]
Combining the original 136 generators with the 2769 compositions produces a new 
generating set of 2905 elements.
After two iterations of self-reduction, this large set of generators collapses to 
the set 25 elements in Table \ref{JQS25} which form a Gr\"obner basis.
There are only 25 monomials in $F\langle a,b,c,d \rangle$ which do not have the leading monomial of
one of these 25 generators as a subword; the cosets of these monomials form a basis of the universal 
associative envelope $U(Q)$:
  \begin{align*}
  &
  1, \quad
  a, \quad
  b, \quad
  c, \quad
  d, \quad
  a^2, \quad
  ab, \quad
  ac, \quad
  ba, \quad
  bc, \quad
  ca, \quad
  cb, \quad
  a^3, \quad
  a^2b, \quad
  a^2c,
  \\
  &
  abc, \quad
  ba^2, \quad
  bac, \quad
  ca^2, \quad
  cab, \quad
  a^3b, \quad
  a^3c, \quad
  a^2bc, \quad
  ba^2c, \quad
  a^3bc.
  \end{align*}
The multiplication table of $U(Q)$ is an array of size $25 \times 25$.   

  \begin{table}
  \[
  \begin{array}{l}
  a^4 - a, \quad 
  aba^2 + a^2ba, \quad 
  ab^2a, \quad 
  aca^2 + a^2ca, \quad 
  acba + abca - a, \quad 
  ac^2a,
  \\
  ada^2 + a^2da, \quad
  adba + abda, \quad 
  adca + acda, \quad 
  ad^2a, \quad 
  ba^3 + a^3b - b, \quad 
  ba^2b,
  \\
  baba + abab, \quad
  baca + acab, \quad
  bada + adab, \quad 
  b^2a^2 + a^2b^2, \quad 
  b^2ab + bab^2,
  \\
  b^3a + ab^3, \quad 
  b^4, \quad 
  b^2ca + acb^2, \quad 
  b^2da + adb^2, \quad
  bca^2 + a^2cb - a,
  \\
  bcab + bacb - b, \quad 
  bcba + abcb - b, \quad 
  bcb^2 + b^2cb, \quad 
  bc^2a + ac^2b, \quad 
  bc^2b,
  \\
  bcda + adcb, \quad 
  bda^2 + a^2db, \quad 
  bdab + badb, \quad 
  bdba + abdb, \quad 
  bdb^2 + b^2db,
  \\
  bdca + acdb - a, \quad
  bdcb + bcdb - b, \quad 
  bd^2a + ad^2b, \quad 
  bd^2b, \quad 
  ca^3 + a^3c - c,
  \\
  ca^2b + ba^2c - d, \quad 
  ca^2c, \quad
  caba + abac, \quad 
  cab^2 + b^2ac, \quad 
  caca + acac,
  \\
  cacb + bcac, \quad 
  cada + adac, \quad 
  cadb + bdac, \quad
  cba^2 + a^2bc - a, \quad
  cbab + babc, 
  \\
  cbac + cabc - c, \quad 
  cb^2a + ab^2c, \quad 
  cb^3 + b^3c, \quad 
  cb^2c, \quad
  cbca + acbc - c, 
  \\
  cbcb + bcbc - d - a, \quad 
  cbda + adbc, \quad 
  cbdb + bdbc, \quad
  c^2a^2 + a^2c^2, \quad
  c^2ab + bac^2, 
  \\
  c^2ac + cac^2, \quad 
  c^2ba + abc^2, \quad 
  c^2b^2 + b^2c^2, \quad
  c^2bc + cbc^2, \quad 
  c^3a + ac^3, \quad
  c^3b + bc^3, 
  \\
  c^4, \quad 
  c^2da + adc^2, \quad 
  c^2db + bdc^2, \quad
  cda^2 + a^2dc, \quad 
  cdab + badc, \quad 
  cdac + cadc,
  \\
  cdba + abdc - a, \quad 
  cdb^2 + b^2dc, \quad
  cdbc + cbdc - c, \quad 
  cdca + acdc, \quad 
  cdcb + bcdc,
  \\
  cdc^2 + c^2dc, \quad 
  cd^2a + ad^2c, \quad
  cd^2b + bd^2c - a, \quad 
  cd^2c, \quad 
  da^3 + a^3d, \quad 
  da^2b + ba^2d,
  \\
  da^2c + ca^2d, \quad 
  da^2d, \quad
  daba + abad, \quad 
  dab^2 + b^2ad, \quad 
  dabc + cbad, \quad 
  daca + acad,
  \\
  dacb + bcad, \quad
  dac^2 + c^2ad, \quad 
  dada + adad, \quad 
  dadb + bdad, \quad 
  dadc + cdad,
  \\
  dba^2 + a^2bd - b, \quad
  dbab + babd, \quad 
  dbac + cabd - d, \quad 
  dbad + dabd, \quad 
  db^2a + ab^2d,
  \\
  db^3 + b^3d, \quad
  db^2c + cb^2d, \quad
  db^2d, \quad 
  dbca + acbd, \quad 
  dbcb + bcbd - b, \quad 
  dbc^2 + c^2bd,
  \\
  dbda + adbd, \quad 
  dbdb + bdbd, \quad 
  dbdc + cdbd, \quad 
  dca^2 + a^2cd - c, \quad 
  dcab + bacd - d,
  \\
  dcac + cacd, \quad 
  dcad + dacd, \quad 
  dcba + abcd, \quad 
  dcb^2 + b^2cd, \quad 
  dcbc + cbcd - c,
  \\
  dcbd + dbcd - d, \quad 
  dc^2a + ac^2d, \quad 
  dc^2b + bc^2d, \quad 
  dc^3 + c^3d, \quad 
  dc^2d, \quad
  dcda + adcd,
  \\
  dcdb + bdcd, \quad 
  dcdc + cdcd, \quad 
  d^2a^2 + a^2d^2, \quad 
  d^2ab + bad^2, \quad
  d^2ac + cad^2,
  \\
  d^2ad + dad^2, \quad
  d^2ba + abd^2 - b, \quad 
  d^2b^2 + b^2d^2, \quad 
  d^2bc + cbd^2 - d, \quad 
  d^2bd + dbd^2,
  \\
  d^2ca + acd^2 - c, \quad
  d^2cb + bcd^2 - d, \quad 
  d^2c^2 + c^2d^2, \quad 
  d^2cd + dcd^2, \quad
  d^3a + ad^3,
  \\
  d^3b + bd^3 - b, \quad
  d^3c + cd^3 - c, \quad 
  d^4 - d. 
  \end{array}
  \]
  \caption{The 136 generators of the ideal $I$ for the Jordan tetrad}
  \label{JQS136}
  \[
  \begin{array}{l}
  ad, \qquad 
  b^2, \qquad 
  bd - ab, \qquad 
  c^2, \qquad 
  cd - ac, \qquad 
  da, \qquad 
  db - ba, \qquad 
  dc - ca,
  \\
  d^2 - cb - bc + a^2, \qquad 
  aba, \qquad 
  aca, \qquad 
  acb + abc - a^3, \qquad 
  bab, \qquad 
  bca - abc,
  \\
  bcb - ba^2 - a^2b, \qquad 
  cac, \qquad 
  cba + abc - a^3, \qquad 
  cbc - ca^2 - a^2c, \qquad 
  a^4 - a,
  \\ 
  ba^3 + a^3b - b, \qquad 
  ba^2b, \qquad 
  ca^3 + a^3c - c, \qquad 
  ca^2b + ba^2c - d, \qquad 
  ca^2c,
  \\
  cabc + a^3c - c. 
  \end{array}
  \]
  \caption{The Gr\"obner basis of the ideal $I$ for the Jordan tetrad}
  \label{JQS25}
  \end{table}  

\begin{exercise}
Use a computer algebra system to calculate the multiplication table of $U(Q)$.
Compute the Wedderburn decomposition of $U(Q)$. 
Prove or disprove that 
  \[
  U(T) \approx F \oplus M_2(F) \oplus M_2(F) \oplus M_2(F) \oplus M_2(F) \oplus M_2(F) \oplus M_2(F).
  \]
\end{exercise}

\begin{remark}
At present there is no general theory of the structures obtained from regarding
the Jordan tetrad as a quadrilinear operation on an associative algebra.
For the role played by tetrads in Jordan theory, 
see McCrimmon \cite{McCrimmon2004}.
\end{remark}

\begin{exercise}
Prove that if $J$ is a finite dimensional Jordan algebra then
its universal associative envelope is also finite dimensional.
\end{exercise}

\begin{exercise}
Prove that if $J$ is an $n$-dimensional Jordan algebra with zero product, 
then $U(J)$ is the exterior algebra of an $n$-dimensional vector space.
\end{exercise}

\begin{exercise}
Prove that if $J$ is the Jordan algebra of a symmetric bilinear form, 
then $U(J)$ is the corresponding Clifford algebra.
\end{exercise}


\section{Multilinear Operations} \label{sectionmultilinear}

We now consider generalizations of the two basic nonassociative bilinear operations,
the Lie bracket and the Jordan product, to $n$-linear operations for any integer $n \ge 2$.
This discussion is based on my papers with Peresi \cite{BremnerPeresi2007,BremnerPeresi2009}.

\subsection{Multilinear operations}

An $n$-linear operation $\omega( a_1, \dots, a_n )$ over a field $F$ 
is a linear combination of permutations of the monomial $a_1 \cdots a_n$.
We regard $\omega$ as a multilinear element of degree $n$ in the free associative algebra on $n$ generators:
  \[
  \omega( a_1, \dots, a_n ) = \sum_{\sigma \in S_n} x_\sigma a_{\sigma(1)} \cdots a_{\sigma(n)}
  \qquad
  ( x_\sigma \in F ),  
  \]
where the sum is over all permutations in the symmetric group $S_n$ acting on $\{ 1, \dots, n \}$.
We may also identify $\omega( a_1, \dots, a_n )$ with an element of $F S_n$, 
the group algebra of the symmetric group $S_n$:
  \[
  \omega( a_1, \dots, a_n ) = \sum_{\sigma \in S_n} x_\sigma \sigma
  \qquad
  ( x_\sigma \in F ).  
  \]
The group $S_n$ acts on $F S_n$ by permuting the subscripts of the generators:
  \[
  \sigma \cdot a_{\tau(1)} \cdots a_{\tau(n)} = a_{\sigma\tau(1)} \cdots a_{\sigma\tau(n)}.
  \]
Two $n$-linear operations are said to be equivalent if each is a linear combination of permutations of the other; 
that is, they generate the same left ideal in $F S_n$.

When discussing $n$-linear operations, we assume that the characteristic of $F$ is either 0 or a prime $p > n$;
this is a necessary and sufficient condition for the group algebra $F S_n$ to be semisimple.
In this case, $F S_n$ is the direct sum of simple two-sided ideals, each isomorphic to a matrix algebra
$M_d(F)$, and the projections of $S_n$ to these matrix algebras define the irreducible representations of $S_n$.

\subsection{The case $n = 2$}

Every bilinear operation is equivalent to one of the following:
the Lie bracket $[x,y] = xy - yx$, the Jordan product $x \circ y = \frac12 ( xy + yx )$, 
the original associative operation $xy$, and the zero operation.  
In other words, the only left ideals in the group algebra $F S_2 \approx F \oplus F$ are $\{0\} \oplus F$,
$F \oplus \{0\}$, $F \oplus F$, and $\{0\} \oplus \{0\}$.
The first summand $F$ corresponds to the unit representation of $S_2$, and a basis for this summand is the
idempotent $\frac12 ( xy + yx )$. 
The second summand corresponds to the sign representation, and a basis for this summand is 
the idempotent $\frac12 ( xy - yx )$.
These two idempotents are orthogonal in the sense that their product is zero.

\subsection{The case $n = 3$}

Faulkner \cite{Faulkner1985} classified the trilinear polynomial identities satisfied by a large class
of nearly simple triple systems.
Twenty years later, trilinear operations were classified up to equivalence in my work with Peresi \cite{BremnerPeresi2007};
we also determined the polynomial identities of degree 5 satisfied by these operations.
The structure of the group algebra in this case is 
  \[
  F S_3 \approx F \oplus M_2(F) \oplus F.
  \]
The first and last summands correspond to the unit and sign representations respectively;
bases for these summands are the following idempotents:
  \begin{align*}
  S &= \tfrac16 ( abc + acb + bac + bca + cab + cba ),
  \\
  A &= \tfrac16 ( abc - acb - bac + bca + cab - cba ).
  \end{align*}
The middle summand $M_2(F)$ corresponds to the irreducible 2-dimensional representation of $S_3$.
To find a basis for $M_2(F)$ corresponding to the matrix units $E_{ij}$ ($i, j = 1, 2$) 
we use the representation theory of the symmetric group developed by Young \cite{Young1977}
and simplified by Rutherford \cite{Rutherford1948} and Clifton \cite{Clifton1981}.
It follows that any trilinear operation can be represented as a triple of matrices:
  \[
  \bigg[ \, a, \, \begin{bmatrix} b_{11} & b_{12} \\ b_{21} & b_{22} \end{bmatrix}\!, \, c \bigg].
  \]
As representatives of the equivalence classes we may take the triples in which each matrix 
is in row canonical form.

Using computer algebra \cite{BremnerPeresi2007}, it can be shown that there are exactly 19 
trilinear operations satisfying polynomial identities in degree 5 which do not follow from
their identities in degree 3.
Simplified forms of these operations were later discovered by the author \cite{Bremner2012}
and Elgendy \cite{Elgendy2013}.
Together with these 19 operations, it is conventional to include the symmetric, alternating
and cyclic sums, even though for these operations, every identity in degree 5 follows
from those of degree 3; see my paper with Hentzel \cite{BremnerHentzel2000}.
These 22 trilinear operations are given in Table \ref{trilinearoperations}.
The first column gives the name of the operation;
the second column gives the row canonical forms of the
representation matrices of the corresponding element of the group algebra;
the third column gives the the simplest representative of the equivalence class 
as a linear combination of permutations.
(The parameter $q$ represents the $(1,2)$ entry of the $2 \times 2$ matrix.)

\begin{table}
\small
\begin{tabular}{lll}
operation 
&\quad 
$F \oplus M_2(F) \oplus F$ 
&\quad 
$F S_3$
\\ \midrule
symmetric sum 
&\quad
$\bigg[ \, 1, \, \begin{bmatrix} 0 & 0 \\ 0 & 0 \end{bmatrix}\!\!, \, 0 \, \bigg]$
&\quad
$abc+acb+bac+bca+cab+cba$
\\[8pt]
alternating sum
&\quad 
$\bigg[ \, 0, \, \begin{bmatrix} 0 & 0 \\ 0 & 0 \end{bmatrix}\!\!, \, 1 \, \bigg]$
&\quad
$abc-acb-bac+bca+cab-cba$
\\[8pt]
cyclic sum
&\quad 
$\bigg[ \, 1, \, \begin{bmatrix} 0 & 0 \\ 0 & 0 \end{bmatrix}\!\!, \, 1 \, \bigg]$
&\quad
$abc+bca+cab$
\\[8pt]
Lie $q = \infty$
&\quad 
$\bigg[ \, 0, \, \begin{bmatrix} 0 & 1 \\ 0 & 0 \end{bmatrix}\!\!, \, 0 \, \bigg]$
&\quad
$abc-acb-bca+cba$
\\[8pt]
Lie $q = \frac12$
&\quad 
$\bigg[ \, 0, \, \begin{bmatrix} 1 & \frac12 \\ 0 & 0 \end{bmatrix}\!\!, \, 0 \, \bigg]$
&\quad
$abc+acb-bca-cba$
\\[8pt]
Jordan $q = \infty$
&\quad 
$\bigg[ \, 1, \, \begin{bmatrix} 0 & 1 \\ 0 & 0 \end{bmatrix}\!\!, \, 0 \, \bigg]$
&\quad
$abc+cba$
\\[8pt]
Jordan $q = 0$
&\quad 
$\bigg[ \, 1, \, \begin{bmatrix} 1 & 0 \\ 0 & 0 \end{bmatrix}\!\!, \, 0 \, \bigg]$
&\quad
$abc+bac$
\\[8pt]
Jordan $q = 1$
&\quad 
$\bigg[ \, 1, \, \begin{bmatrix} 1 & 1 \\ 0 & 0 \end{bmatrix}\!\!, \, 0 \, \bigg]$
&\quad
$abc+acb$
\\[8pt]
Jordan $q = \frac12$
&\quad 
$\bigg[ \, 1, \, \begin{bmatrix} 1 & \frac12 \\ 0 & 0 \end{bmatrix}\!\!, \, 0 \, \bigg]$
&\quad
$abc+2acb+2cab+cba$
\\[8pt]
anti-Jordan $q = \infty$
&\quad 
$\bigg[ \, 0, \, \begin{bmatrix} 0 & 1 \\ 0 & 0 \end{bmatrix}\!\!, \, 1 \, \bigg]$
&\quad
$abc-2acb+2cab-cba$
\\[8pt]
anti-Jordan $q = -1$
&\quad 
$\bigg[ \, 0, \, \begin{bmatrix} 1 & -1 \\ 0 & 0 \end{bmatrix}\!\!, \, 1 \, \bigg]$
&\quad
$abc-acb$
\\[8pt]
anti-Jordan $q = \frac12$
&\quad 
$\bigg[ \, 0, \, \begin{bmatrix} 1 & \frac12 \\ 0 & 0 \end{bmatrix}\!\!, \, 1 \, \bigg]$
&\quad
$abc-cba$
\\[8pt]
anti-Jordan $q = 2$ 
&\quad 
$\bigg[ \, 0, \, \begin{bmatrix} 1 & 2 \\ 0 & 0 \end{bmatrix}\!\!, \, 1 \, \bigg]$
&\quad
$abc-bac$
\\[8pt]
fourth family $q = \infty$ 
&\quad 
$\bigg[ \, 1, \, \begin{bmatrix} 0 & 1 \\ 0 & 0 \end{bmatrix}\!\!, \, 1 \, \bigg]$
&\quad
$abc-acb-bac$
\\[8pt]
fourth family $q = 0$ 
&\quad 
$\bigg[ \, 1, \, \begin{bmatrix} 1 & 0 \\ 0 & 0 \end{bmatrix}\!\!, \, 1 \, \bigg]$
&\quad
$abc-acb+bca$
\\[8pt]
fourth family $q = 1$ 
&\quad 
$\bigg[ \, 1, \, \begin{bmatrix} 1 & 1 \\ 0 & 0 \end{bmatrix}\!\!, \, 1 \, \bigg]$
&\quad
$abc-bac+cab$
\\[8pt]
fourth family $q = -1$ 
&\quad 
$\bigg[ \, 1, \, \begin{bmatrix} 1 & -1 \\ 0 & 0 \end{bmatrix}\!\!, \, 1 \, \bigg]$
&\quad
$abc+bac+cab$
\\[8pt]
fourth family $q = 2$ 
&\quad 
$\bigg[ \, 1, \, \begin{bmatrix} 1 & 2 \\ 0 & 0 \end{bmatrix}\!\!, \, 1 \, \bigg]$
&\quad
$abc+acb+bca$
\\[8pt]
fourth family $q = \frac12$
&\quad 
$\bigg[ \, 1, \, \begin{bmatrix} 1 & \frac12 \\ 0 & 0 \end{bmatrix}\!\!, \, 1 \, \bigg]$
&\quad
$abc+acb+bac$
\\[8pt]
cyclic commutator 
&\quad 
$\bigg[ \, 0, \, \begin{bmatrix} 1 & 0 \\ 0 & 1 \end{bmatrix}\!\!, \, 0 \, \bigg]$
&\quad
$abc-bca$
\\[8pt]
weakly commutative
&\quad 
$\bigg[ \, 1, \, \begin{bmatrix} 1 & 0 \\ 0 & 1 \end{bmatrix}\!\!, \, 0 \, \bigg]$
&\quad
$abc+acb+bac-cba$
\\[8pt]
weakly anticommutative
&\quad 
$\bigg[ \, 0, \, \begin{bmatrix} 1 & 0 \\ 0 & 1 \end{bmatrix}\!\!, \, 1 \, \bigg]$
&\quad
$abc+acb-bca-cab$
\\ \midrule
\end{tabular}
\caption{The twenty-two trilinear operations}
\label{trilinearoperations}
\end{table}

\subsection{Associative $n$-ary algebras}

Simple associative triple systems were classified by Hestenes \cite{Hestenes1962}, 
Lister \cite{Lister1971} and Loos \cite{Loos1972};
their work was extended to simple associative $n$-ary systems by Carlsson \cite{Carlsson1980}.
The classification by Carlsson can be reformulated as follows.
Let $( d_1, \dots, d_{n-1} )$ be a sequence of $n{-}1$ positive integers;
two such sequences are regarded as equivalent if they differ only by a cyclic permutation.
For each $i = 1, \dots, n{-}1$, let $V_i$ be a vector space of dimension $d_i$ over $F$, 
and consider the direct sum $V = V_1 \oplus \cdots \oplus V_{n-1}$.
Let $A$ be the subspace of $\mathrm{End}_F(V)$ consisting of the linear operators $T\colon V \to V$
which satisfy the conditions 
  \[
  T(V_1) \subseteq V_2,
  \quad
  T(V_2) \subseteq V_3,
  \quad
  \dots,
  \quad
  T(V_{n{-}2}) \subseteq V_{n{-}1},
  \quad
  T(V_{n{-}1}) \subseteq V_1.
  \] 
Then $A$ is a simple associative $n$-ary system, and every such system has this form.
If we choose bases of the subspaces $V_1, \dots, V_{n-1}$ then we can represent the elements of $A$
as $D \times D$ block matrices where $D = d_1 + \cdots + d_{n-1}$.
The block in position $(i,j)$ where $1 \le i, j \le n{-}1$ has size $d_i \times d_j$; 
nonzero entries may appear only in blocks $(2,1)$, \dots, $(n{-}1,n)$, $(n,1)$.
To illustrate, for $n = 3, 4, 5$ we obtain the matrices of the following forms, 
where $T_{ij}$ is an arbitrary block of size $d_i \times d_j$:
  \[
  \left[
  \begin{array}{cc}
  0 & T_{12} \\
  T_{21} & 0
  \end{array}
  \right],
  \qquad
  \left[
  \begin{array}{ccc}
  0 & 0 & T_{13} \\
  T_{21} & 0 & 0 \\
  0 & T_{32} & 0
  \end{array}
  \right],
  \qquad
  \left[
  \begin{array}{cccc}
  0 & 0 & 0 & T_{14} \\
  T_{21} & 0 & 0 & 0 \\
  0 & T_{32} & 0 & 0 \\
  0 & 0 & T_{43} & 0
  \end{array}
  \right].
  \]

\subsection{Special nonassociative $n$-ary systems}

If $A$ is an associative $n$-ary system and $\omega(a_1, \dots, a_n)$ is an $n$-linear operation,
then we obtain a nonassociative $n$-ary system $A^\omega$ by interpreting each monomial in $\omega$
as the corresponding product in $A$.
Such a nonassociative $n$-ary system is called special (by analogy with special Jordan algebras) 
since it comes from a multilinear operation on an associative system.

In order to understand these nonassociative $n$-ary systems, we construct their universal associative envelopes
using the theory of noncommutative Gr\"obner bases.
The ultimate goal is to classify all the irreducible finite dimensional representations of these systems.
This generalizes the familiar construction of the universal enveloping algebras of Lie and Jordan algebras, 
where a dichotomy arises: 
a finite dimensional simple Lie algebra has an infinite dimensional universal envelope 
and infinitely many isomorphism classes of irreducible finite dimensional representations, 
but a finite dimensional simple Jordan algebra has a finite dimensional universal envelope 
and only finitely many irreducible representations.

\subsection{Universal associative envelopes}

This subsection gives the precise definition of the universal associative envelope of a nonassociative $n$-ary system
relative to an $n$-linear operation;
we consider only the case of a special nonassociative $n$-ary system.
The earliest discussion of this construction appears to be that of Birkhoff and Whitman
\cite[\S 2]{BirkhoffWhitman1949}; the presentation here follows my survey paper \cite[\S 7.2]{Bremner2012}.

Suppose that $B$ is a subspace, of an associative $n$-ary system $A$ over the field $F$, 
which is closed under the $n$-linear operation
  \[
  \omega( a_1, \dots, a_n ) = \sum_{\sigma \in S_n} x_\sigma a_{\sigma(1)} \cdots a_{\sigma(n)}
  \qquad
  ( x_\sigma \in F ). 
  \]
Set $d = \dim B$ and let $\{ b_1, \dots, b_d \}$ be a basis of $B$ over $F$; 
then we have the structure constants for the resulting nonassociative $n$-ary system $B^\omega$:
  \[
  \omega( b_{i_1}, \dots, b_{i_n} ) 
  =
  \sum_{j=1}^d
  c^j_{i_1 \cdots i_n} b_j
  \quad
  ( 1 \le i_1, \dots, i_n \le d ).
  \]
Let $F\langle X \rangle$ be the free associative algebra generated by the symbols $X = \{ b_1, \dots, b_d \}$
and consider the ideal $I \subseteq F\langle X \rangle$ generated by the following $d^n$ elements:
  \[
  \sum_{\sigma \in S_n} 
  x_\sigma b_{i_{\sigma(1)}} \cdots b_{i_{\sigma(n)}}
  -
  \sum_{j=1}^d
  c^j_{i_1 \cdots i_n} b_j
  \quad
  ( 1 \le i_1, \dots, i_n \le d ).
  \]
The quotient algebra $U(B^\omega) = F\langle X \rangle/I$ is the universal associative enveloping algebra of 
the nonassociative $n$-ary system $B^\omega$.
Since the $n$-ary structure on $B^\omega$ is special
(that is, defined in terms of the associative structure on $A$),
the natural map $B^\omega \to U(B^\omega)$ will necessarily be injective.

From this set of generators for the ideal $I$, we use the algorithm of 
Figure \ref{grobnerbasisalgorithm} to compute a Gr\"obner basis for $I$.
We then use this Gr\"obner basis to obtain a monomial basis for the universal associative envelope $U(B^\omega)$.
The multiplication table for $U(B^\omega)$ is then obtained by computing normal forms of products of basis monomials.
The next section is devoted to examples of this procedure.


\section{Special Nonassociative Triple Systems} \label{trilinearexamples}

In her Ph.D. thesis \cite{ElgendyThesis} and her forthcoming paper \cite{Elgendy2013}, 
Elgendy undertook a detailed study
using noncommutative Gr\"obner bases of the universal associative envelopes of the nonassociative triple systems 
obtained by applying the trilinear operations of Table \ref{trilinearoperations} to 
the 2-dimensional associative triple system $A_1$ of the form 
  \[
  \begin{bmatrix} 0 & \ast \\ \ast & 0 \end{bmatrix},
  \]
where $\ast$ represents an arbitrary scalar.
She distinguished two classes of operations: those of Lie type, for which the universal envelopes
are infinite dimensional; and those of Jordan type, for which the universal envelopes are finite dimensional.
For the operations of Lie type, she discovered that the universal envelopes are closely related to the down-up algebras
introduced by Benkart and Roby \cite{BenkartRoby1998}.
For the operations of Jordan type, she determined explicit Wedderburn decompositions of the universal
envelopes and classified the irreducible representations;
for these cases, she used the algorithms described in my survey paper \cite{Bremner2011}.

In this section, I consider the same problem for the 4- and 6-dimensional associative triple systems 
$A_2$ and $a_3$ consisting of all matrices of the forms
  \[
  \begin{bmatrix} 0 & \ast & \ast \\ \ast & 0 & 0 \\ \ast & 0 & 0 \end{bmatrix},
  \qquad
  \begin{bmatrix} 0 & \ast & \ast & \ast \\ \ast & 0 & 0 & 0 \\ \ast & 0 & 0 & 0 \\ \ast & 0 & 0 & 0 \end{bmatrix}.  
  \]
The resulting universal envelopes provide many examples of associative algebras,
both finite dimensional and infinite dimensional, that deserve further study.
It seems reasonable to expect that this will lead to generalizations of down-up algebras, and to 
nonassociative triple systems with many finite dimensional representations.

The computations are described in detail for $A_2$ and the results for $A_1$, $A_2$ and $A_3$ are summarized
in Table \ref{universalenvelopes}.
All calculations were done using Maple worksheets written by the author.

\subsection{Symmetric sum}

The original set of generators obtained from the structure constants consists of these 20 elements
which form a Gr\"obner basis for the ideal:
  \begin{align*}
  &
  a^3, \quad 
  ba^2 + aba + a^2b, \quad 
  b^2a + bab + ab^2, \quad 
  b^3, \quad 
  ca^2 + aca + a^2c - a,
  \\
  & 
  cba + cab + bca + bac + acb + abc - b, \quad 
  cb^2 + bcb + b^2c, \quad 
  c^2a + cac + ac^2 - c,
  \\
  & 
  c^2b + cbc + bc^2, \quad 
  c^3, \quad 
  da^2 + ada + a^2d, \quad 
  dba + dab + bda + bad + adb + abd - a,
  \\
  &
  db^2 + bdb + b^2d - b, \quad 
  dca + dac + cda + cad + adc + acd - d,
  \\
  &
  dcb + dbc + cdb + cbd + bdc + bcd - c, \quad 
  dc^2 + cdc + c^2d,
  \\
  &
  d^2a + dad + ad^2, \quad 
  d^2b + dbd + bd^2 - d, \quad 
  d^2c + dcd + cd^2, \quad 
  d^3. 
  \end{align*}
There are infinitely many monomials in $F\langle a,b,c,d \rangle$ which do not contain the leading monomial
of one of these generators as a subword, and so the universal envelope is infinite dimensional.
The first few dimensions of the homogeneous components of the associated graded algebra are as follows:
1, 4, 16, 44, 131, 344, 972, 2592, \dots.

\subsection{Alternating sum}

The original set of generators obtained from the structure constants consists of these 4 elements
which form a Gr\"obner basis for the ideal:
  \begin{align*}
  &
  cba - cab - bca + bac + acb - abc - b, 
  \quad 
  dba - dab - bda + bad + adb - abd + a,
  \\
  &
  dca - dac - cda + cad + adc - acd + d, 
  \quad 
  dcb - dbc - cdb + cbd + bdc - bcd - c.
  \end{align*} 
There are infinitely many monomials in $F\langle a,b,c,d \rangle$ which do not contain the leading monomial
of one of these generators as a subword, 
and so in this case again, the universal associative envelope is infinite dimensional.
The first few dimensions of the homogeneous components of the associated graded algebra are
1, 4, 16, 60, 225, 840, 3136, 11704, \dots.
The \emph{On-line Encyclopedia of Integer Sequences} 
\cite{OEIS}, sequence A072335, suggests that the generating function for these dimensions is
  \[
  \frac{1}{(1-x^2)(1-4x+x^2)}.  
  \]
Since the generating function has such a simple form, it seems reasonable to expect that the universal envelope 
has an interesting structure, and that the original 4-dimensional alternating triple system has a large class
of finite dimensional irreducible representations.

\begin{openproblem}
Prove the last claim about the generating function for the dimensions of the homogeneous components
of the associated graded algebra.
\end{openproblem}

\begin{openproblem}
Investigate the relationship between the universal envelopes for the symmetric and alternating sums 
and down-up algebras; see \cite{BenkartRoby1998} and \cite{Elgendy2013}.
\end{openproblem}

\subsection{Cyclic sum}

The original set of generators obtained from the structure constants consists of these 24 elements,
forming a self-reduced set:
  \begin{align*}
  &
  a^3, \quad 
  ba^2 + aba + a^2b, \quad 
  b^2a + bab + ab^2, \quad 
  b^3, \quad 
  ca^2 + aca + a^2c - a,
  \\
  &
  cab + bca + abc, \quad 
  cba + bac + acb - b, \quad 
  cb^2 + bcb + b^2c, \quad 
  c^2a + cac + ac^2 - c,
  \\
  & 
  c^2b + cbc + bc^2, \quad 
  c^3, \quad 
  da^2 + ada + a^2d, \quad 
  dab + bda + abd - a,
  \\
  &
  dac + cda + acd - d, \quad 
  dba + bad + adb, \quad 
  db^2 + bdb + b^2d - b, \quad 
  dbc + cdb + bcd,
  \\
  &
  dca + cad + adc, \quad 
  dcb + cbd + bdc - c, \quad 
  dc^2 + cdc + c^2d, \quad 
  d^2a + dad + ad^2,
  \\
  &
  d^2b + dbd + bd^2 - d, \quad 
  d^2c + dcd + cd^2, \quad 
  d^3. 
  \end{align*}
This is not a Gr\"obner basis; there are 40 distinct nontrivial compositions of these generators, 
the most complicated of which has this normal form:
  \begin{align*}
  &
  bacda + bacad + acbda - acadb - abcad + abadc -2 abacd + a^2cbd + a^2bdc 
  \\[-3pt]
  &
  -2 a^2bcd -2 bda - bad + adb - aca - abd -2 a^2c +2 a. 
  \end{align*}  
Combining the original 24 generators with the 40 compositions gives a set of 64 elements;
applying self-reduction to this set produces a new generating set of 59 elements.
This new generating set produces 724 distinct nontrivial compositions.
The combined set of 783 generators self-reduces to 62 elements, 
which form a Gr\"obner basis for the ideal:
  \begin{align*}
  &
  a^3, \quad 
  a^2b, \quad 
  a^2d, \quad
  aba, \quad 
  ab^2, \quad 
  abc, \quad 
  abd - a^2c, \quad 
  aca + a^2c - a, \quad 
  ada, \quad 
  adb,
  \\
  &
  adc, \quad 
  ad^2, \quad 
  ba^2, \quad 
  bab, \quad 
  bac + acb - b, \quad 
  bad, \quad 
  b^2a, \quad 
  b^3, \quad 
  b^2c, \quad 
  b^2d + acb - b,
  \\
  &
  bca, \quad 
  bcb, \quad 
  bc^2, \quad 
  bcd, \quad 
  bda + a^2c - a, \quad 
  bdb - acb, \quad 
  bdc - ac^2, \quad 
  bd^2 - acd,
  \\
  &
  ca^2, \quad 
  cab, \quad 
  cac + ac^2 - c, \quad 
  cad, \quad 
  cba, \quad 
  cb^2, \quad 
  cbc, \quad 
  cbd + ac^2 - c, \quad 
  c^2a, \quad 
  c^2b,
  \\
  &
  c^3, \quad 
  c^2d, \quad 
  cda, \quad 
  cdb, \quad 
  cdc, \quad 
  cd^2, \quad 
  da^2, \quad 
  dab, \quad 
  dac + acd - d, \quad 
  dad, \quad 
  dba,
  \\
  & 
  db^2, \quad 
  dbc, \quad 
  dbd + acd - d, \quad 
  dca, \quad 
  dcb, \quad 
  dc^2, \quad 
  dcd, \quad 
  d^2a, \quad 
  d^2b, \quad 
  d^2c, \quad 
  d^3,
  \\
  & 
  a^2cb - ab, \quad 
  a^2cd - ad. 
  \end{align*}
Only finitely many monomials in $F\langle a,b,c,d \rangle$ do not have a subword
equal to the leading monomial of an element of this Gr\"obner basis.
The universal associative envelope has a basis consisting of the cosets of these 26 monomials:
  \begin{align*}
  &
  1, \quad
  a, \quad
  b, \quad
  c, \quad
  d, \quad
  a^2, \quad
  ab, \quad
  ac, \quad
  ad, \quad
  ba, \quad
  b^2, \quad
  bc, \quad
  bd, \quad
  ca, \quad
  cb, \quad
  c^2,
  \\
  &
  cd, \quad
  da, \quad
  db, \quad
  dc, \quad
  d^2, \quad
  a^2c, \quad
  acb, \quad
  ac^2, \quad
  acd, \quad
  a^2c^2.
  \end{align*}  

\begin{exercise}
Determine the radical of the universal envelope in this case, and the decomposition of the semisimple
quotient into a direct sum of simple ideals.
\end{exercise}

\subsection{Lie $q = \infty$} 

In this case we are studying a simple Lie triple system; see Lister \cite{Lister1952}.
The original set of generators obtained from the structure constants consists of 24 elements;
after self-reduction, we are left with 20 elements:
  \begin{align*}  
  &
  ba^2 -2 aba + a^2b, \quad 
  b^2a -2 bab + ab^2, \quad 
  ca^2 -2 aca + a^2c +2 a,
  \\
  &
  cab - bca + bac - acb + b, \quad 
  cba - bca - acb + abc + b, \quad 
  cb^2 -2 bcb + b^2c,
  \\
  &
  c^2a -2 cac + ac^2 +2 c, \quad 
  c^2b -2 cbc + bc^2, \quad 
  da^2 -2 ada + a^2d,
  \\
  &
  dab - bda + bad - adb + a, \quad 
  dac - cda + cad - adc - d,
  \\
  &
  dba - bda - adb + abd + a, \quad 
  db^2 -2 bdb + b^2d +2 b, \quad 
  dbc - cdb + cbd - bdc - c,
  \\
  &
  dca - cda - adc + acd, \quad 
  dcb - cdb - bdc + bcd, \quad 
  dc^2 -2 cdc + c^2d,
  \\
  &
  d^2a -2 dad + ad^2, \quad 
  d^2b -2 dbd + bd^2 +2 d, \quad 
  d^2c -2 dcd + cd^2. 
  \end{align*}
There are 24 distinct compositions; the most complicated normal form is
  \begin{align*}
  &
  bdcda - bdadc - bcdad + badcd + adcdb - adbdc - acdbd + abdcd + bd^2 +3 acd.
  \end{align*} 
Combining the original 20 generators with the 24 compositions and
applying self-reduction gives a new generating set of 16 elements,
which is a Gr\"obner basis:
  \begin{align*}
  &
  ba - ab, \quad 
  dc - cd, \quad 
  ca^2 -2 aca + a^2c +2 a, \quad 
  cab - bca - acb + abc + b,
  \\
  &
  cb^2 -2 bcb + b^2c, \quad 
  c^2a -2 cac + ac^2 +2 c, \quad 
  c^2b -2 cbc + bc^2, \quad 
  da^2 -2 ada + a^2d,
  \\
  & 
  dab - bda - adb + abd + a, \quad 
  dac - cda + cad - acd - d, \quad 
  db^2 -2 bdb + b^2d +2 b,
  \\
  &
  dbc - cdb + cbd - bcd - c, \quad 
  d^2a -2 dad + ad^2, \quad 
  d^2b -2 dbd + bd^2 +2 d,
  \\
  &
  cbca - cacb - bcac + acbc + cb + bc, \quad 
  dbda - dadb - bdad + adbd - da - ad. 
  \end{align*}
There are infinitely many monomials in $F\langle a,b,c,d \rangle$ which do not contain the leading monomial
of one of these generators as a subword, 
and so in this case, the universal associative envelope is infinite dimensional.
The generating function for the dimensions seems to be as follows; see \cite{OEIS}, sequence A038164:
  \[
  \frac{1}{(1-x)^4(1-x^2)^4}.
  \] 
The first few terms are 1, 4, 14, 36, 85, 176, 344, 624, 1086, 1800, 2892, 4488, \dots.

\begin{openproblem}
Investigate the universal enveloping algebras of Lie triple systems and their representation theory.
Every finite dimensional Lie triple system can be embedded into a finite dimensional Lie algebra
as the odd subspace of a 2-grading on the Lie algebra.
For recent work, see Hodge and Parshall \cite{HodgeParshall2002}.
\end{openproblem}

\subsection{Lie $q = \frac12$} 

In this case we are studying a simple anti-Lie triple system.
The original set of generators obtained from the structure constants consists of 40 elements;
after self-reduction, we are left with 20 elements:
  \begin{align*}  
  &
  ba^2 - a^2b, \quad 
  b^2a - ab^2, \quad 
  ca^2 - a^2c, \quad 
  cab - bca - bac + acb - b,
  \\
  &
  cba + bca - acb - abc + b, \quad 
  cb^2 - b^2c, \quad 
  c^2a - ac^2, \quad 
  c^2b - bc^2, \quad 
  da^2 - a^2d,
  \\
  &
  dab - bda - bad + adb + a, \quad 
  dac - cda - cad + adc - d,
  \\
  &
  dba + bda - adb - abd - a, \quad 
  db^2 - b^2d, \quad 
  dbc - cdb - cbd + bdc + c,
  \\
  &
  dca + cda - adc - acd, \quad 
  dcb + cdb - bdc - bcd, \quad 
  dc^2 - c^2d, \quad 
  d^2a - ad^2,
  \\
  &
  d^2b - bd^2, \quad 
  d^2c - cd^2. 
  \end{align*} 
There are 26 distinct nontrivial compositions of these generators, 
the most complicated of which has normal form
  \begin{align*}
  &
  bdcda - bdadc - bcdad + badcd - adcdb + adbdc + acdbd - abdcd - bd^2 - acd.  
  \end{align*}
Combining the original 20 generators with the 26 compositions 
and applying self-reduction gives a new generating set of 12 elements,
which is a Gr\"obner basis:
  \begin{align*}  
  & 
  a^2, \quad 
  ba + ab, \quad 
  b^2, \quad 
  c^2, \quad 
  dc + cd, \quad 
  d^2, \quad
  cab - bca + acb + abc - b,
  \\
  & 
  dab - bda + adb + abd + a, \quad 
  dac - cda - cad - acd - d,
  \\
  &
  dbc - cdb - cbd - bcd + c, \quad 
  cbca - cacb - bcac + acbc + cb - bc,
  \\
  &
  dbda - dadb - bdad + adbd - da + ad.
  \end{align*} 
There are infinitely many monomials in $F\langle a,b,c,d \rangle$ which do not contain the leading monomial
of one of these generators as a subword, 
and so again in this case, the universal associative envelope is infinite dimensional.
The first few dimensions of the homogeneous components in the associated graded algebra are
1, 4, 10, 20, 35, 56, 84, 120, 165, 220, 286, 364, 455, 560, 680, 816, 969, \dots.
According to \cite{OEIS}, sequence A000292, these are the tetrahedral numbers;
the generating function is
  \[
  \sum_{n=1}^\infty \binom{n+2}{3} x^n.
  \]

\begin{openproblem}
Investigate the universal enveloping algebras of anti-Lie triple systems.
Every finite dimensional anti-Lie triple system can be embedded into a finite dimensional Lie superalgebra
as the odd subspace.
See the recent monograph by Musson \cite{Musson2012} on Lie superalgebras and their enveloping algebras.
\end{openproblem}

\subsection{Jordan $q = \infty$} 

In this case we are studying a simple Jordan triple system.
The original set of 40 generators is already self-reduced:
  \begin{align*}
  &
  a^3, \quad 
  aba, \quad 
  aca - a, \quad 
  ada, \quad 
  ba^2 + a^2b, \quad 
  bab, \quad 
  b^2a + ab^2, \quad 
  b^3, \quad 
  bca + acb - b,
  \\
  &
  bcb, \quad 
  bda + adb - a, \quad 
  bdb - b, \quad 
  ca^2 + a^2c, \quad 
  cab + bac, \quad 
  cac - c, \quad 
  cba + abc,
  \\
  & 
  cb^2 + b^2c, \quad 
  cbc, \quad 
  c^2a + ac^2, \quad 
  c^2b + bc^2, \quad 
  c^3, \quad 
  cda + adc, \quad 
  cdb + bdc, \quad 
  cdc,
  \\
  &
  da^2 + a^2d, \quad 
  dab + bad, \quad 
  dac + cad - d, \quad 
  dad, \quad 
  dba + abd, \quad 
  db^2 + b^2d,
  \\
  &
  dbc + cbd - c, \quad 
  dbd - d, \quad 
  dca + acd, \quad 
  dcb + bcd, \quad 
  dc^2 + c^2d, \quad 
  dcd,
  \\
  &
  d^2a + ad^2, \quad
  d^2b + bd^2, \quad 
  d^2c + cd^2, \quad 
  d^3.
  \end{align*} 
There are 32 distinct nontrivial compositions; their normal forms are
  \begin{align*}
  &
  a^2, \quad 
  ab, \quad 
  ba, \quad 
  b^2, \quad 
  c^2, \quad 
  cd, \quad 
  dc, \quad 
  d^2, \quad 
  a^2b, \quad 
  a^2c, \quad 
  a^2d, \quad 
  ab^2, \quad 
  abc,
  \\
  &
  abd, \quad 
  abd + a^2c, \quad 
  ac^2, \quad 
  acd, \quad 
  adc, \quad 
  ad^2, \quad 
  bac, \quad 
  bad, \quad 
  b^2c, \quad 
  b^2d, \quad 
  \\
  & 
  b^2d + bac, \quad
  bc^2, \quad 
  bcd, \quad 
  bdc + ac^2, \quad 
  bdc, \quad 
  bd^2 + acd, \quad 
  bd^2, \quad 
  c^2d, \quad 
  cd^2.
  \end{align*} 
The combined set of 72 elements self-reduces to 20, forming a Gr\"obner basis:
  \begin{align*}
  &
  a^2, \quad 
  ab, \quad 
  ba, \quad 
  b^2, \quad 
  c^2, \quad 
  cd, \quad 
  dc, \quad 
  d^2, \quad 
  aca - a, \quad 
  ada, \quad 
  bca + acb - b, 
  \\
  & 
  bcb, \quad
  bda + adb - a, \quad 
  bdb - b, \quad 
  cac - c, \quad 
  cbc, \quad 
  dac + cad - d, \quad 
  dad,
  \\
  &
  dbc + cbd - c, \quad
  dbd - d.
  \end{align*}
There are only 19 monomials which do not contain the leading monomial of an element of the Gr\"obner basis,
so the universal associative envelope is finite dimensional and has the
cosets of the following monomials as a basis:
  \begin{align*}
  &
  1, \quad
  a, \quad
  b, \quad
  c, \quad
  d, \quad
  ac, \quad
  ad, \quad
  bc, \quad
  bd, \quad
  ca, \quad
  cb, \quad
  da, \quad
  db,
  \\
  &
  acb, \quad
  adb, \quad
  cad, \quad
  cbd, \quad
  acbd, \quad
  cadb.
  \end{align*}  

\begin{exercise}
Compute the Wedderburn decomposition of the universal associative envelope of this Jordan triple system. 
In particular, prove or disprove that the envelope is isomorphic to $F \oplus M_3(F) \oplus M_3(F)$.
\end{exercise}

\begin{exercise}
Let $T$ be a finite dimensional Jordan triple system.  Prove that $U(T)$ is also finite dimensional.
\end{exercise}

\subsection{Jordan $q = 0$} 

The original self-reduced set of generators has 40 elements.
There are 20 distinct nontrivial compositions, 
and the combined set of 60 elements self-reduces to 27 elements.
These 27 generators have 4 distinct nontrivial compositions,
and the combined set of 31 elements self-reduces to 15 elements,
which form a Gr\"obner basis for the ideal:
  \begin{equation}
  \label{jordangrobner}
  \left\{ \;
  \begin{array}{l}
  a^2, \qquad 
  ab, \qquad 
  ad, \qquad 
  ba, \qquad 
  b^2, \qquad
  bc, \qquad 
  bd - ac, \qquad 
  c^2, \qquad 
  cd, 
  \\ 
  dc, \qquad
  d^2, \qquad 
  aca - a, \qquad 
  acb - b, \qquad 
  cac - c, \qquad 
  dac - d.
  \end{array}
  \right.
  \end{equation}
The universal associative envelope has dimension 10 with basis consisting of the cosets of the elements
$1$, $a$, $b$, $c$, $d$, $ac$, $ca$, $cb$, $da$, $db$.
See Exercise \ref{jordanexercise} below.

\subsection{Jordan $q = 1$} 

The original self-reduced set of generators has 40 elements.
There are 19 distinct nontrivial compositions, 
and the combined set of 59 elements self-reduces to 27 elements.
These 27 generators have 6 distinct nontrivial compositions,
and the combined set of 33 elements self-reduces to 15 elements,
which form a Gr\"obner basis for the ideal.
This Gr\"obner basis is the same as in the previous case,
and so the universal envelopes are isomorphic.

\subsection{Jordan $q = \frac12$} 

The original self-reduced set of 40 generators has 94 distinct nontrivial compositions, 
and the combined set of 134 elements self-reduces to 15 elements,
which form the Gr\"obner basis \eqref{jordangrobner}.

\begin{exercise} \label{jordanexercise}
Compute the Wedderburn decomposition of 
the universal envelope for the Gr\"obner basis \eqref{jordangrobner}. 
Prove or disprove that it is isomorphic to $F \oplus M_3(F)$.
\end{exercise}

\subsection{Anti-Jordan $q = \infty$} 

The original self-reduced set of 24 generators has 76 distinct nontrivial compositions, 
and the combined set of 100 generators self-reduces to 15 elements,
which is the Gr\"obner basis \eqref{jordangrobner}.

\subsection{Anti-Jordan $q = -1$} 

The original self-reduced set of 24 generators has 37 distinct nontrivial compositions.
The combined set of 61 elements self-reduces to 23 elements with 6 distinct nontrivial compositions.
The combined set of 29 elements self-reduces to 15 elements,
which is the Gr\"obner basis \eqref{jordangrobner}.

\subsection{Anti-Jordan $q = 2$} 

The original self-reduced set of 24 generators has 37 distinct nontrivial compositions.
The combined set of 61 elements self-reduces to 23 elements with 4 distinct nontrivial compositions.
The combined set of 27 elements self-reduces to 15 elements,
which is the Gr\"obner basis \eqref{jordangrobner}.

\subsection{Anti-Jordan $q = \frac12$} 

In this case we are studying a simple anti-Jordan triple system; 
see Faulkner and Ferrar \cite{FaulknerFerrar1980} and the Ph.D. thesis of Bashir \cite{Bashir2008}.
The original self-reduced set of 24 generators is as follows:
  \begin{align*}
  & 
  ba^2 - a^2b, \quad 
  b^2a - ab^2, \quad 
  bca - acb + b, \quad 
  bda - adb - a, \quad 
  ca^2 - a^2c, \quad 
  cab - bac,
  \\
  &
  cba - abc, \quad 
  cb^2 - b^2c, \quad 
  c^2a - ac^2, \quad 
  c^2b - bc^2, \quad 
  cda - adc, \quad 
  cdb - bdc,
  \\
  &
  da^2 - a^2d, \quad 
  dab - bad, \quad 
  dac - cad - d, \quad 
  dba - abd, \quad 
  db^2 - b^2d, \quad 
  dbc - cbd + c,
  \\
  & 
  dca - acd, \quad 
  dcb - bcd, \quad 
  dc^2 - c^2d, \quad 
  d^2a - ad^2, \quad 
  d^2b - bd^2, \quad 
  d^2c - cd^2.
  \end{align*}
There are 32 distinct nontrivial compositions:
  \begin{align*}
  &
  a^2, \quad 
  ab, \quad 
  ba, \quad 
  b^2, \quad 
  c^2, \quad 
  cd, \quad 
  dc, \quad 
  d^2, \quad 
  a^2b, \quad 
  a^2c, \quad 
  a^2d, \quad 
  ab^2, \quad 
  abc,
  \\
  & 
  abd, \quad 
  abd + a^2c, \quad 
  ac^2, \quad 
  acd, \quad 
  adc, \quad 
  ad^2, \quad 
  bac, \quad 
  bad, \quad 
  b^2c, \quad 
  b^2d,
  \\
  &
  b^2d + bac, \quad
  bc^2, \quad 
  bcd, \quad 
  bdc, \quad 
  bdc + ac^2, \quad 
  bd^2 + acd, \quad 
  bd^2, \quad 
  c^2d, \quad 
  cd^2.
  \end{align*}
The combined set of 56 elements self-reduces to a Gr\"obner basis of 12 elements:
  \begin{align*}
  &
  a^2, \quad 
  ab, \quad 
  ba, \quad 
  b^2, \quad 
  c^2, \quad 
  cd, \quad 
  dc, \quad 
  d^2, \quad 
  bca - acb + b, \quad 
  bda - adb - a,
  \\
  &
  dac - cad - d, \quad 
  dbc - cbd + c. 
  \end{align*}
The universal associative envelope is infinite dimensional;
the dimensions of the homogeneous components of 
the associated graded algebra appear to be
  \[
  \tfrac12 (n+1) (n+3) \quad \text{($n$ odd)},
  \qquad
  \tfrac12 (n+2)^2 \quad \text{($n$ even)}.
  \]
  
\begin{openproblem}
Prove that the universal envelope is infinite dimensional, 
and that the dimensions of the homogeneous components are as stated.
\end{openproblem}

\subsection{Fourth family $q = \infty$} 

The original set of 52 generators self-reduces to 44 elements,
which have 140 distinct nontrivial compositions.
Self-reducing the combined set of 184 generators produces the Gr\"obner basis \eqref{jordangrobner}.

\subsection{Fourth family $q = 0$} 

The original set of 52 generators self-reduces to 44 elements,
which have 88 distinct nontrivial compositions.
Self-reducing the combined set of 132 generators produces the Gr\"obner basis \eqref{jordangrobner}.

\subsection{Fourth family $q = 1$} 

The original set of 52 generators self-reduces to 44 elements,
which have 76 distinct nontrivial compositions.
Self-reducing the combined set of 120 generators produces the Gr\"obner basis \eqref{jordangrobner}. 

\subsection{Fourth family $q = -1$} 

The original set of 64 generators self-reduces to 44 elements,
which have 209 distinct nontrivial compositions.
Self-reducing the combined set of 253 generators produces the Gr\"obner basis \eqref{jordangrobner}. 

\subsection{Fourth family $q = 2$} 

The original set of 64 generators self-reduces to 44 elements,
which have 227 distinct nontrivial compositions.
Self-reducing the combined set of 271 generators produces the Gr\"obner basis \eqref{jordangrobner}.

\subsection{Fourth family $q = \frac12$} 

The original set of 64 generators self-reduces to 44 elements,
which have 184 distinct nontrivial compositions.
Self-reducing the combined set of 228 generators produces the Gr\"obner basis \eqref{jordangrobner}.

\subsection{Cyclic commutator} 

The original set of 60 generators self-reduces to 40 elements,
which have 86 distinct nontrivial compositions.
Self-reducing the combined set of 126 generators produces the Gr\"obner basis \eqref{jordangrobner}.

\subsection{Weakly commutative operation} 

The original set of 64 generators self-reduces to 60 elements,
which have 15 distinct nontrivial compositions.
Self-reducing the combined set of 75 generators produces the Gr\"obner basis \eqref{jordangrobner}. 

\subsection{Weakly anticommutative operation} 

The original set of 60 generators self-reduces to 44 elements,
which have 41 distinct nontrivial compositions.
Self-reducing the combined set of 85 generators produces the Gr\"obner basis \eqref{jordangrobner}. 

\begin{table}
\begin{tabular}{llll}
operation 
&\!\!\!\!\!\!
$U(A_1^\omega)$
&\!\!\!\!\!\!
$U(A_2^\omega)$
&\!\!\!\!\!\!
$U(A_3^\omega)$
\\ 
\midrule
Sym sum
&\!\!\!\!\!\!
$\Big\{\!\! \begin{array}{l} 4 \\ 1{,}2{,}4{,}\textbf{4{,}5}{,}\dots \end{array}$
&\!\!\!\!\!\!
$\Big\{\!\! \begin{array}{l} 20 \\ 1{,}4{,}16{,}44{,}131{,}344{,}\dots \end{array}$
&\!\!\!\!\!\!
$\Big\{\!\! \begin{array}{l} 56 \\ 1{,}6{,}36{,}160{,}750{,}3240{,}\dots \end{array}$
\\[6pt]
Alt sum
&\!\!\!\!\!\!
$\Big\{\!\! \begin{array}{l} 0 \\ 1{,}2{,}4{,}8{,}16{,}32{,}\dots \end{array}$
&\!\!\!\!\!\!
$\Big\{\!\! \begin{array}{l} 4 \\ 1{,}4{,}16{,}60{,}225{,}840{,}\dots \end{array}$
&\!\!\!\!\!\!
$\Big\{\!\! \begin{array}{l} 20 \\ 1{,}6{,}36{,}196{,}1071{,}5796{,}\dots \end{array}$
\\[6pt]
Cyc sum
&\!\!\!\!\!\!
$\Big\{\!\! \begin{array}{l} 4 \\ 1{,}2{,}4{,}\textbf{4{,}5}{,}\dots \end{array}$
&\!\!\!\!\!\!
$\Big\{\!\! \begin{array}{l} 24{,}40 \,|\, 59{,}724 \,|\, 62 \\ 26 \end{array}$
&\!\!\!\!\!\!
$\Big\{\!\! \begin{array}{l} \text{unable to complete} \end{array}$
\\[6pt]
Lie $q = \infty$
&\!\!\!\!\!\!
$\Big\{\!\! \begin{array}{l} 2 \\ 1{,}2{,}4{,}6{,}9{,}12{,}\dots \end{array}$
&\!\!\!\!\!\!
$\Big\{\!\! \begin{array}{l} 20{,}24 \,|\, 16 \\ 1{,}4{,}14{,}36{,}85{,}176{,}\dots \end{array}$
&\!\!\!\!\!\!
$\Big\{\!\! \begin{array}{l} 70{,}140 \,|\, 51 \\ 1{,}6{,}30{,}110{,}360{,}1026{,}\dots \end{array}$
\\[6pt]
Lie $q = \frac12$
&\!\!\!\!\!\!
$\Big\{\!\! \begin{array}{l} 2 \\ 1{,}2{,}4{,}6{,}9{,}12{,}\dots \end{array}$
&\!\!\!\!\!\!
$\Big\{\!\! \begin{array}{l} 20{,}26 \,|\, 12 \\ 1{,}4{,}10{,}20{,}35{,}56{,}\dots \end{array}$
&\!\!\!\!\!\!
$\Big\{\!\! \begin{array}{l} 70{,}147 \,|\, 39 \\ 1{,}6{,}24{,}74{,}195{,}456{,}\dots \end{array}$
\\[6pt]
Jor $q = \infty$
&\!\!\!\!\!\!
$\Big\{\!\! \begin{array}{l} 6{,}4 \,|\, 4 \\ 5 \end{array}$
&\!\!\!\!\!\!
$\Big\{\!\! \begin{array}{l} 40{,}32 \,|\, 20 \\ 19 \end{array}$
&\!\!\!\!\!\!
$\Big\{\!\! \begin{array}{l} 126{,}107 \,|\, 54 \\ 69 \end{array}$
\\[6pt]
Jor $q = 0$
&\!\!\!\!\!\!
$\Big\{\!\! \begin{array}{l} 6 \\ 9 \end{array}$
&\!\!\!\!\!\!
$\Big\{\!\! \begin{array}{l} 40{,}20 \,|\, 27{,}4 \,|\, 15 \\ 10 \end{array}$
&\!\!\!\!\!\!
$\Big\{\!\! \begin{array}{l} 126{,}97 \,|\, 71{,}9 \,|\, 32 \\ 17 \end{array}$
\\[6pt]
Jor $q = 1$
&\!\!\!\!\!\!
$\Big\{\!\! \begin{array}{l} 6 \\ 9 \end{array}$
&\!\!\!\!\!\!
$\Big\{\!\! \begin{array}{l} 40{,}19 \,|\, 27{,}6 \,|\, 15 \\ 10 \end{array}$
&\!\!\!\!\!\!
$\Big\{\!\! \begin{array}{l} 126{,}93 \,|\, 71{,}18 \,|\, 32 \\ 17 \end{array}$
\\[6pt]
Jor $q = \frac12$
&\!\!\!\!\!\!
$\Big\{\!\! \begin{array}{l} 6{,}4 \,|\, 4 \\ 5 \end{array}$
&\!\!\!\!\!\!
$\Big\{\!\! \begin{array}{l} 40{,}94 \,|\, 15 \\ 10 \end{array}$
&\!\!\!\!\!\!
$\Big\{\!\! \begin{array}{l} 126{,}542 \,|\, 32 \\ 17 \end{array}$
\\[6pt]
AJ $q = \infty$
&\!\!\!\!\!\!
$\Big\{\!\! \begin{array}{l} 2 \\ 1{,}2{,}4{,}6{,}9{,}12{,}\dots \end{array}$
&\!\!\!\!\!\!
$\Big\{\!\! \begin{array}{l} 24{,}76 \,|\, 15 \\ 10 \end{array}$
&\!\!\!\!\!\!
$\Big\{\!\! \begin{array}{l} 90{,}513 \,|\, 32 \\ 17 \end{array}$
\\[6pt]
AJ $q = -1$
&\!\!\!\!\!\!
$\Big\{\!\! \begin{array}{l} 2{,}2 \,|\, 4{,}2 \,|\, 4 \\ 5 \end{array}$
&\!\!\!\!\!\!
$\Big\{\!\! \begin{array}{l} 24{,}37 \,|\, 23{,}6 \,|\, 15 \\ 10 \end{array}$
&\!\!\!\!\!\!
$\Big\{\!\! \begin{array}{l} 90{,}135 \,|\, 62{,}18 \,|\, 32 \\ 17 \end{array}$
\\[6pt]
AJ $q = \frac12$
&\!\!\!\!\!\!
$\Big\{\!\! \begin{array}{l} 2 \\ 1{,}2{,}4{,}6{,}9{,}12{,}\dots \end{array}$
&\!\!\!\!\!\!
$\Big\{\!\! \begin{array}{l} 24{,}32 \,|\, 12 \\ 1{,}4{,}8{,}12{,}18{,}24{,}\dots \end{array}$
&\!\!\!\!\!\!
$\Big\{\!\! \begin{array}{l} 90{,}107 \,|\, 36 \\ 1{,}6{,}18{,}36{,}72{,}120{,}\dots \end{array}$
\\[6pt]
AJ $q = 2$ 
&\!\!\!\!\!\!
$\Big\{\!\! \begin{array}{l} 2{,}2 \,|\, 4{,}2 \,|\, 4 \\ 5 \end{array}$
&\!\!\!\!\!\!
$\Big\{\!\! \begin{array}{l} 24{,}37 \,|\, 23{,}4 \,|\, 15 \\ 10 \end{array}$
&\!\!\!\!\!\!
$\Big\{\!\! \begin{array}{l} 90{,}137 \,|\, 62{,}9 \,|\, 32 \\ 17 \end{array}$
\\[6pt]
4th $q = \infty$ 
&\!\!\!\!\!\!
$\Big\{\!\! \begin{array}{l} 6{,}4 \,|\, 4 \\ 5 \end{array}$
&\!\!\!\!\!\!
$\Big\{\!\! \begin{array}{l} 40{,}140 \,|\, 15 \\ 10 \end{array}$
&\!\!\!\!\!\!
$\Big\{\!\! \begin{array}{l} 146{,}1065 \,|\, 32 \\ 17 \end{array}$
\\[6pt]
4th $q = 0$ 
&\!\!\!\!\!\!
$\Big\{\!\! \begin{array}{l} 6 \\ 9 \end{array}$
&\!\!\!\!\!\!
$\Big\{\!\! \begin{array}{l} 44{,}88 \,|\, 15 \\ 10 \end{array}$
&\!\!\!\!\!\!
$\Big\{\!\! \begin{array}{l} 146{,}737 \,|\, 32 \\ 17 \end{array}$
\\[6pt]
4th $q = 1$ 
&\!\!\!\!\!\!
$\Big\{\!\! \begin{array}{l} 6 \\ 9 \end{array}$
&\!\!\!\!\!\!
$\Big\{\!\! \begin{array}{l} 44{,}76 \,|\, 15 \\ 10 \end{array}$
&\!\!\!\!\!\!
$\Big\{\!\! \begin{array}{l} 146{,}618 \,|\, 32 \\ 17 \end{array}$
\\[6pt]
4th $q = -1$ 
&\!\!\!\!\!\!
$\Big\{\!\! \begin{array}{l} 6{,}5 \,|\, 4 \\ 5 \end{array}$
&\!\!\!\!\!\!
$\Big\{\!\! \begin{array}{l} 44{,}209 \,|\, 15 \\ 10 \end{array}$
&\!\!\!\!\!\!
$\Big\{\!\! \begin{array}{l} 146{,}1432 \,|\, 32 \\ 17 \end{array}$
\\[6pt]
4th $q = 2$ 
&\!\!\!\!\!\!
$\Big\{\!\! \begin{array}{l} 6{,}5 \,|\, 4 \\ 5 \end{array}$
&\!\!\!\!\!\!
$\Big\{\!\! \begin{array}{l} 44{,}227 \,|\, 15 \\ 10 \end{array}$
&\!\!\!\!\!\!
$\Big\{\!\! \begin{array}{l} 146{,}1601 \,|\, 32 \\ 17 \end{array}$
\\[6pt]
4th $q = \frac12$
&\!\!\!\!\!\!
$\Big\{\!\! \begin{array}{l} 6{,}4 \,|\, 4 \\ 5 \end{array}$
&\!\!\!\!\!\!
$\Big\{\!\! \begin{array}{l} 44{,}184 \,|\, 15 \\ 10 \end{array}$
&\!\!\!\!\!\!
$\Big\{\!\! \begin{array}{l} 146{,}1347 \,|\, 32 \\ 17 \end{array}$
\\[6pt]
Cyc com
&\!\!\!\!\!\!
$\Big\{\!\! \begin{array}{l} 4{,}4 \,|\, 4 \\ 5 \end{array}$
&\!\!\!\!\!\!
$\Big\{\!\! \begin{array}{l} 40{,}86 \,|\, 15 \\ 10 \end{array}$
&\!\!\!\!\!\!
$\Big\{\!\! \begin{array}{l} 140{,}396 \,|\, 32 \\ 17 \end{array}$
\\[6pt]
Weak C
&\!\!\!\!\!\!
$\Big\{\!\! \begin{array}{l} 8{,}2 \,|\, 4 \\ 5 \end{array}$
&\!\!\!\!\!\!
$\Big\{\!\! \begin{array}{l} 60{,}15 \,|\, 15 \\ 10 \end{array}$
&\!\!\!\!\!\!
$\Big\{\!\! \begin{array}{l} 196{,}58 \,|\, 32 \\ 17 \end{array}$
\\[6pt]
Weak AC
&\!\!\!\!\!\!
$\Big\{\!\! \begin{array}{l} 4{,}4 \,|\, 4 \\ 5 \end{array}$
&\!\!\!\!\!\!
$\Big\{\!\! \begin{array}{l} 44{,}41 \,|\, 15 \\ 10 \end{array}$
&\!\!\!\!\!\!
$\Big\{\!\! \begin{array}{l} 160{,}124 \,|\, 32 \\ 17 \end{array}$
\\ \midrule
\end{tabular}
\caption{Universal associative envelopes of nonassociative triple systems}
\label{universalenvelopes}
\end{table}

\subsection{Summary}

Table \ref{universalenvelopes} summarizes the results of Elgendy \cite{ElgendyThesis,Elgendy2013} for $U(A_1^\omega)$,
the results of this section for $U(A_2^\omega)$, and further computations for $U(A_3^\omega)$.
Each entry in the table has the form 
  \[
  \Big\{ \begin{array}{l} \text{algorithm} \\ \text{dimension} \end{array}
  \]
where ``algorithm'' describes the performance of the Gr\"obner basis algorithm,
and ``dimension'' gives the dimension of the universal associative envelope.
The ``algorithm'' data consists of a sequence of pairs $x,y$ corresponding to the iterations of the algorithm;
$x$ is the size of the self-reduced set 
of generators at the start of the iteration, and $y$ is the number of distinct nontrivial compositions
in normal form at the end of the iteration (if $y = 0$ it is omitted). 
The ``dimension'' data consists either of a single number (in the case where the universal envelope is
finite dimensional), or a sequence of numbers giving the first few dimensions of the homogeneous components
of the associated graded algebra (in the case where the universal envelope is infinite dimensional).
Dimensions in boldface indicate values that repeat indefinitely.

For example, consider the entry in row ``Lie $q = \infty$'' and column ``$U(A_3^\omega)$'',
the Lie triple product on the 6-dimensional simple associative triple system:
  \[
  \Big\{\!\! \begin{array}{l} 70,140 \,|\, 51 \\ 1,6,30,110,360,1026,\dots \end{array}
  \]
This means:
  \begin{enumerate}
  \item[(a)]
The algorithm terminated after two iterations:
the original self-reduced set of 70 generators produced 140 nontrivial compositions;
the combined set of 210 generators self-reduced to a Gr\"obner basis of 51 elements.
  \item[(b)]
The universal associative envelope is infinite dimensional, and the generating function for the dimensions of the
homogeneous components of the associated graded algebra begins with the terms
  \[
  1 + 6z + 30z^2 + 110z^3 + 360z^4 + 1026z^5 + \cdots
  \]
  \end{enumerate}
In one case, $U(A_3^\omega)$ for the cyclic sum, the computations were so complicated that Maple 14
on my MacBook Pro was unable to complete them in a reasonable time.  
This may be related to the fact that the polynomial identities satisfied by this operation are extremely complicated;
see my paper with Peresi \cite{BremnerPeresi2009b}.

\subsection{Conclusions}

The results of Table \ref{universalenvelopes} suggest a slightly different classification 
of operations into ``Lie type'' and ``Jordan type'' from that of Elgendy \cite{ElgendyThesis,Elgendy2013}.
Two operations, the cyclic sum and the anti-Jordan $q = \infty$ operation,
produce infinite dimensional envelopes for $A_1^\omega$ but finite dimensional envelopes
for $A_2^\omega$.  
It seems likely that $A_1^\omega$ is exceptional, owing to its small dimension, 
and that the universal associative envelopes will be finite dimensional when either of these operations
is applied to a simple associative triple system of dimension $> 2$.
If this is correct, then these two operations should be reclassified as having ``Jordan type''.  

Four operations produced a non-semisimple envelope for $A_1^\omega$:
Jordan $q = 0, 1$ and fourth family $q = 0, 1$.
In these cases, the 9-dimensional envelope has a 4-dimensional radical and a 5-dimensional semisimple quotient
which is isomorphic to $F \oplus M_2(F)$.
For these operations it seems very likely that $U(A_n^\omega)$ ($n = 2, 3$) is semisimple
and is isomorphic to $F \oplus M_{n+1}(F)$.
The reason is that the dimension of the envelope (10 for $n = 2$ and 17 for $n = 3$) 
is the sum of the squares of the dimensions of the 1-dimensional trivial representation and the 
$(n{+}1)$-dimensional natural representation.
This seems to hold for most of the operations: the universal envelopes are finite dimensional
and the only irreducible representations are the trivial representation and the natural representation.

\begin{conjecture}
Let $A_{p,q}$ ($p \le q$) be the simple associative triple system consisting of $(p{+}q) \times (p{+}q)$ block matrices 
of the form
  \[
  \begin{bmatrix}
  0 & p \times q \\
  q \times p & 0
  \end{bmatrix}.
  \]
Let $\omega$ be one of the following trilinear operations from Table \ref{trilinearoperations}:
Jordan ($q = 0, 1, \frac12$), 
anti-Jordan ($q = \infty, -1, 2$),
fourth family (all cases),
cyclic commutator, weakly commutative, weakly anticommutative.
Then, with finitely many exceptions, $U(A^\omega)$ is finite dimensional 
and semisimple and is isomorphic to $F \oplus M_{p+q}(F)$.
\end{conjecture}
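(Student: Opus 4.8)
The plan is to pin down $U(A^\omega)$ by producing its two expected irreducible representations and then showing that nothing else survives. Write $A = A_{p,q}$, with basis the off-diagonal matrix units $E_{i,p+j}$ and $E_{p+j,i}$ ($1 \le i \le p$, $1 \le j \le q$). First I would build the \emph{natural representation}: the inclusion $A \hookrightarrow M_{p+q}(F)$ sends each generator to its matrix. Because $\omega$ is a \emph{special} operation — every monomial in $\omega$ is read as an honest associative product — each defining relation of $U(A^\omega)$ holds identically in $M_{p+q}(F)$, so the inclusion extends to a unital algebra homomorphism $\bar\phi \colon U(A^\omega) \to M_{p+q}(F)$. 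Since $E_{i,p+j}E_{p+j,i'} = E_{ii'}$ and $E_{p+j,i}E_{i,p+j'} = E_{p+j,p+j'}$, the off-diagonal units generate all of $M_{p+q}(F)$, so $\bar\phi$ is surjective. Second, the \emph{trivial representation} $\epsilon \colon U(A^\omega) \to F$ sending every generator to $0$ respects the relations, since each defining relation equates a degree-$3$ expression with a degree-$1$ expression and both sides vanish under $\epsilon$.

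Combining these gives $\Phi = \bar\phi \oplus \epsilon \colon U(A^\omega) \to M_{p+q}(F) \oplus F$. As $M_{p+q}(F)$ is simple, $\ker\bar\phi$ is a maximal two-sided ideal, while $\ker\epsilon$ is the codimension-one augmentation ideal; these maximal ideals are distinct (a generator lies in $\ker\epsilon$ but not in $\ker\bar\phi$), hence comaximal, so the Chinese Remainder Theorem makes $\Phi$ surjective. This yields the lower bound $\dim U(A^\omega) \ge 1 + (p+q)^2$ and exhibits the trivial and natural representations as inequivalent irreducible constituents. I would also record the striking experimental fact that, for $A_2$, all of the operations in the statement collapse to the single Gr\"obner basis \eqref{jordangrobner}; this suggests that the defining ideals $I_\omega$ actually coincide across these operations, so that it may suffice to analyze one convenient member of the list and transfer the conclusion to the others.

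The heart of the matter is the reverse inequality $\dim U(A^\omega) \le 1 + (p+q)^2$, equivalently the injectivity of $\Phi$. Here I would study the Gr\"obner basis of $I_\omega$ produced by the algorithm of Figure \ref{grobnerbasisalgorithm}, and aim to show that its leading monomials cover every word of degree $\ge 3$. By Theorem \ref{theoremgrobner}(a) the surviving normal words would then all have degree $\le 2$, so $U(A^\omega)$ is spanned by $1$ together with monomials of degree $\le 2$. The decisive structural point, valid uniformly in $p$ and $q$, is that in $M_{p+q}(F)$ \emph{every} matrix unit is already $\bar\phi$ of a monomial of degree at most $2$; I would then check that the degree-$1$ and degree-$2$ normal words map bijectively under $\bar\phi$ onto the $(p+q)^2$ matrix units, while $1$ supplies the summand $F$. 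This gives a spanning set of size $1 + (p+q)^2$, forcing $\Phi$ to be an isomorphism and $U(A^\omega)$ to be semisimple.

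I expect the genuine obstacle to be this uniform upper bound. For the fixed systems $A_1, A_2, A_3$ the Gr\"obner basis is simply computed, but here one must describe its leading monomials — or equivalently the set of surviving normal words — independently of $p$ and $q$, and the reduction rules differ between the Jordan, anti-Jordan, fourth-family, and commutator-type operations. I therefore anticipate either a case analysis across these families unified by the degree-$\le 2$ principle above, or a proof that all the ideals $I_\omega$ agree (as the $A_2$ computation hints) so that a single case governs all. Finally, ``with finitely many exceptions'' must be made precise: the collapse of higher-degree monomials needs enough independent generators, which fails for the smallest systems — already for $A_{1,1}$ several listed operations give infinite-dimensional or non-semisimple envelopes (see Table \ref{universalenvelopes}) — so I would isolate an explicit threshold (such as $p+q \ge 3$) below which the general argument is replaced by the direct computations already tabulated.
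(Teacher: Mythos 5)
The statement you are addressing is labelled a \emph{Conjecture} in the paper: the author offers no proof, only the computational evidence of Table \ref{universalenvelopes} for the three systems $A_{1,1}$, $A_{1,2}$, $A_{1,3}$, where the listed operations all collapse to the single Gr\"obner basis \eqref{jordangrobner} (or its analogue) and yield envelopes of dimension $1+(p+q)^2$. So there is no proof in the paper to compare yours against, and your text should be judged as an attempted proof of an open statement.

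Your first two paragraphs are sound and constitute genuine partial progress: since $\omega$ is special, the inclusion $A_{p,q}\hookrightarrow M_{p+q}(F)$ respects the defining relations and induces a surjection $\bar\phi\colon U(A^\omega)\to M_{p+q}(F)$ (the off-diagonal units do generate the full matrix algebra), the augmentation $\epsilon$ exists because each relation is a sum of terms of degrees $3$ and $1$ with no constant term, and the Chinese Remainder argument with the two distinct maximal ideals correctly yields a surjection onto $F\oplus M_{p+q}(F)$ and hence $\dim U(A^\omega)\ge 1+(p+q)^2$. This lower bound is not spelled out in the paper and is worth recording.

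The genuine gap is that the reverse inequality --- the claim that every normal word of degree $\ge 3$ is killed, so that $U(A^\omega)$ is spanned by $1$ and the (at most) $(p+q)^2$ surviving words of degree $\le 2$ --- is never established. Your third and fourth paragraphs describe what a proof \emph{would} have to do (``I would study the Gr\"obner basis\dots and aim to show\dots''), but they supply no uniform description of the leading monomials of $I_\omega$ valid for all $p,q$, no argument that the rewriting terminates in a finite Gr\"obner basis for general $p,q$, and no verification that the degree-$\le 2$ normal words are exactly $(p+q)^2$ in number and linearly independent. Since the upper bound is the entire content of the conjecture (finiteness, semisimplicity, and the isomorphism type all follow from it together with your surjection), the statement remains unproven. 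Two further cautions: your suggestion that the ideals $I_\omega$ coincide across the listed operations is only evidenced for $A_{1,2}$ and $A_{1,3}$ and would itself require proof for general $(p,q)$ before you could ``analyze one convenient member and transfer''; and the phrase ``with finitely many exceptions'' in the conjecture ranges over both the pairs $(p,q)$ and the operations, so any proposed threshold such as $p+q\ge 3$ must be reconciled with the fact that already for $A_{1,1}$ some listed operations (Jordan $q=0,1$, fourth family $q=0,1$) give a $9$-dimensional non-semisimple envelope.
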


The operations not included in this conjecture are the first three operations 
(the symmetric, alternating, and cyclic sums), 
together with the four classical operations
(the Lie, anti-Lie, Jordan, and anti-Jordan triple products).
These seem likely to be the operations producing nonassociative triple systems with 
the most interesting representation theory.
This is well-known for the four classical operations, owing to their close connection
with Lie and Jordan algebras and superalgebras.
On the other hand, very little is known about the representation theory of nonassociative triple systems
arising from the first three operations.

\begin{openproblem}
Study the structure of the universal associative envelopes,
and classify the finite dimensional irreducible representations, 
for the nonassociative triple systems $A_{p,q}^\omega$ where
$\omega$ is the symmetric, alternating, and cyclic sum.
\end{openproblem}


\section{Bibliographical Remarks} \label{bibliographicalremarks}

The historical origins of the theory of Gr\"obner bases are complex, with similar ideas discovered 
in different contexts at different times by different people. 

\subsection{The commutative case}

The most famous branch of the theory, owing to its close connections with algebraic geometry, 
is that of commutative Gr\"obner bases.
Many of these ideas can be traced back to the work of Macaulay; his 1916 monograph on 
\emph{The Algebraic Theory of Modular Systems}
is available online \cite{Macaulay1916}.
The original work of Gr\"obner most often cited as the origin of the theory of commutative Gr\"obner bases
is his 1939 paper on linear differential equations \cite{Grobner1939};
this has appeared in English translation \cite{Grobner2009} with commentary by the translator \cite{Abramson2009}.
The modern form of the theory which emphasizes the algorithmic aspects originated in the 1965 Ph.D. thesis of
Buchberger which has been translated into English \cite{BuchbergerThesis} with commentary by the author
\cite{Buchberger2006}; see also his 1970 paper \cite{Buchberger1970}.
There are many textbooks on the theory of commutative Gr\"obner bases and their applications; see
Adams and Loustaunau \cite{Adams1994},
Becker and Weispfennig \cite{Becker1993},
Cox et al. \cite{Cox1992}, 
Ene and Herzog \cite{Ene2012}, and
Fr\"oberg \cite{Froberg1997}.

\subsection{The noncommutative case}

The theory of noncommutative Gr\"obner bases seems to have originated with the Russian school of nonassociative algebra;
see the papers of Zhukov \cite{Zhukov1950} and especially Shirshov \cite{Shirshov1962a,Shirshov1962}.
The first systematic statements of the Composition (Diamond) Lemma in the noncommutative case,
and its application to the proof of the PBW theorem, were published almost simultaneously by
Bokut \cite{Bokut1976} and Bergman \cite{Bergman1978}.
The latter paper traces the origins of the theory to earlier work of Newman \cite{Newman1942}.
The computational complexity of algorithms for constructing noncommutative Gr\"obner bases 
has been studied by F. Mora \cite{FMora1986}.
The Ph.D. thesis of Keller \cite{Keller1997,Keller1998} on noncommutative Gr\"obner bases 
led to the software package Opal \cite{Green1997}.
A more recent software package, with extensive online documentation, 
has been developed by Cohen and Gijsbers \cite{CohenGijsbers2007}.
For some important papers on theory and algorithms for noncommutative Gr\"obner bases, see
Borges-Trenard et al. \cite{Borges2000},
Gerritzen \cite{Gerritzen1998},
Green et al. \cite{GreenMoraUfnarovski2000}, and
Kang et al. \cite{Kang2007}.
For a connection between commutative and noncommutative Gr\"obner bases, see Eisenbud et al. \cite{Eisenbud1998}.
For an extension to noncommutative power series, see Gerritzen and Holtkamp  \cite{GerritzenHoltkamp1998}.
For textbooks on noncommutative Gr\"obner bases, see
Bokut and Kukin \cite{BokutKukin1994},
Bueso et al. \cite{Bueso2003}, and
Li \cite{Li2012}.

\subsection{The nonassociative case}

The most important branch of the nonassociative theory deals with Gr\"obner-Shirshov bases for free Lie algebras;
see Bokut and Chibrikov \cite{BokutChibrikov2006} and Bokut and Chen \cite{BokutChen2007}.
A theory of Gr\"obner-Shirshov bases in free nonassociative algebras has been developed by
Gerritzen \cite{Gerritzen2000,Gerritzen2006} and Rajaee \cite{Rajaee2006}.
For related work on Sabinin algebras, see 
Shestakov and Umirbaev \cite{ShestakovUmirbaev2002},
P\'erez-Izquierdo \cite{Perez2007},
and
Chibrikov \cite{Chibrikov2011}.

\subsection{Loday algebras}

An active area of current research is extending the Composition (Diamond) Lemma
from associative algebras to the dialgebras and dendriform algebras introduced by Loday 
\cite{Loday1993,Loday1995,Loday2002}.
For associative dialgebras, see  Bokut et al. \cite{BokutChenLiu2010}.
For dendriform algebras, see Bokut et al. \cite{BokutChenHuang2013}, Chen and Wang \cite{ChenWang2010},
as well as the papers on Rota-Baxter algebras by Bokut et al. \cite{BokutChenDeng2010,BokutChenQiu2010},
Chen and Mo \cite{ChenMo2011}, Qiu \cite{Qiu2013}, and Guo et al \cite{GuoSitZhang2013}.
It is an open problem to extend these results further to the quadri-algebras of Aguiar and Loday
\cite{AguiarLoday2004}, and to the Koszul dual of quadri-algebras introduced by Vallette 
\cite[\S 5.6]{Vallette2008}.
For Leibniz algebras, which are the analogues of Lie algebras in the setting of dialgebras, 
see Loday and Pirashvili \cite{LodayPirashvili1993}, Aymon and Grivel \cite{AymonGrivel2003},
Casas et al. \cite{CasasInsuaLadra2007}, Insua and Ladra \cite{InsuaLadra2009}.
For pre-Lie algebras, which are the analogue of Lie algebras in the setting of dendriform algebras,
see Bokut et al. \cite{BokutChenLi2008}.
For L-dendriform algebras, which are the analogue of Lie algebras in the setting of quadri-algebras, 
see Bai et al. \cite{BaiLiuNi2010}.
(For corresponding generalizations of Jordan algebras, see Hou et al. \cite{HouBai2012,HouNiBai2013}.)

\subsection{Survey papers}

A survey of commutative and noncommutative Gr\"obner bases from the point of view of theoretical computer science 
has been written by T. Mora \cite{TMora1994}.
For an introduction to noncommutative Gr\"obner bases from the point of view of computer algebra,
see Green \cite{Green1994,Green1999} and Ufnarovski \cite{Ufnarovski1998}.
A number of introductory surveys of Gr\"obner-Shirshov bases in associative and 
nonassociative algebras have been written by Bokut and his co-authors: see
Bokut \cite{Bokut1999}, Bokut and Kolesnikov \cite{BokutKolesnikov2003}, and Bokut and Shum \cite{BokutShum2005}.


\section*{Acknowledgements}

I thank NSERC (Natural Sciences and Engineering Research Council of Canada) 
for financial support through a Discovery Grant, and the faculty and staff of CIMAT 
(Centro de Investigaci\'on en Matem\'aticas, Guanajuato, Mexico) 
for their hospitality during the Research School on 
\emph{Associative and Nonassociative Algebras and Dialgebras: Theory and Algorithms - 
In Honour of Jean-Louis Loday (1946--2012)} 
from February 17 to March 2, 2013 which was sponsored by CIMPA 
(Centre International de Math\'ematiques Pures et Appliqu\'ees).
I thank my former Ph.D. student Hader Elgendy for pointing out some errors
in an earlier version of these notes.


\end{document}